\renewcommand*\env@matrix[1][*\c@MaxMatrixCols c]{%
  \hskip -\arraycolsep
  \let\@ifnextchar\new@ifnextchar
  \array{#1}}
\theoremstyle{plain}
\numberwithin{equation}{subsection}
\newtheorem{thm}[equation]{Theorem}
\newtheorem{prop}[equation]{Proposition}
\newtheorem{Definition}[equation]{Definition}
\newtheorem{Remark}[equation]{Remark}
\theoremstyle{remark}
\theoremstyle{plain}
\renewcommand{\subsubsection}{\addtocounter{equation}{1}{\vskip 6pt \bf\arabic{section}.\arabic{subsection}.\arabic{equation}}}
\theoremstyle{definition}
\newcommand{\quash}[1]{}  
\newcommand{\nc}{\newcommand}
\nc{\on}{\operatorname}
\newcommand{\lps}{[\![}
\newcommand{\rps}{]\!]}
\newcommand{\llps}{(\!(}
\newcommand{\lrps}{)\!)}
\renewcommand{\phi}{\varphi}
\newcommand{\bbA}{{\mathbb A}}
\newcommand{\bbG}{{\mathbb G}}
\newcommand{\bbL}{{\mathbb L}}
\newcommand{\bbP}{{\mathbb P}}
\newcommand{\calA}{{\mathcal A}}
\newcommand{\calB}{{\mathcal B}}
\newcommand{\calG}{{\mathcal G}}
\newcommand{\calL}{{\mathcal L}}
\newcommand{\calM}{{\mathcal M}}
\newcommand{\calV}{{\mathcal V}}
\newcommand{\calX}{{\mathcal X}}
\newcommand{\calY}{{\mathcal Y}}
\nc{\al}{{\alpha}} \nc{\be}{{\beta}}
\nc{\ve}{{\varepsilon}} \nc{\Ga}{{\Gamma}}
\nc{\La}{{\Lambda}}
\def\0{\circ}
\newcommand{\tol}{\longrightarrow}
\newcommand{\br}{\breve}
\newcommand{\Mfr}{{\mathfrak M}}
\newcommand{\C}{{\mathbb C}}
\newcommand{\R}{{\mathbb R}}
\newcommand{\Q}{{\mathbb Q}}
\newcommand{\Hom}{{\rm Hom}}
\newcommand{\Gm}{{{\mathbb G}_{\rm m}}}
\newcommand{\Z}{{\mathbb Z}}
\newcommand{\ti}{\tilde}
\newcommand{\Spec}{{\rm Spec \, } }
 \renewcommand{\O}{{\mathcal O}}
\newcommand{\GL}{{\rm GL}}
\newcommand{\und}{\underline}
\newcommand{\Mloc}{{\rm M}^{\rm loc}}
\newcommand{\Mbl}{{\rm M}^{\rm bl}}
\def\thfill{\null\nobreak\hfill}
\def\endproof{\thfill\vbox{\hrule
  \hbox{\vrule\hbox to 5pt{\vbox to 5pt{\vfil}\hfil}\vrule}\hrule}}
\newcommand{\GSpin}{{\rm GSpin}}
\newcommand{\Spin}{{\rm Spin}}
\newcommand{\SO}{{\rm SO}}
\newcommand{\cQ}{{\mathcal Q}}
\newcommand{\lan}{\langle}
\newcommand{\ran}{\rangle}
\newcommand{\wti}{\widetilde}
\newcommand{\un}{\underline}
\begin{document}

\title[integral models for Shimura varieties]{Regular integral models for Shimura varieties\\ of orthogonal type}

\author[G. Pappas]{G. Pappas}
\email{pappasg@msu.edu}
\address{Dept. of
Mathematics\\
Michigan State
Univ.\\
E. Lansing\\
MI 48824\\
USA}

\thanks{The first author acknowledges support by NSF grant \# DMS-2100743}

\author[I. Zachos]{I. Zachos}
 \email{zachosi@bc.edu}
\address{
Dept. of
Mathematics\\
Boston College\\
Chestnut Hill\\
MA 02467\\
USA}


\begin{abstract}
 We consider Shimura varieties for orthogonal or spin groups acting on hermitian symmetric domains of type IV. We give regular $p$-adic integral models for these varieties over odd primes $p$ at which the level subgroup is the connected stabilizer of a vertex lattice in the orthogonal space. Our construction is obtained by combining 
results of Kisin and the first author with an explicit presentation and resolution of a corresponding local model.

\end{abstract}

\maketitle

\tableofcontents

\section{Introduction}\label{sIntro}

\subsection{} This paper makes a contribution towards the goal of constructing regular integral models for Shimura varieties
over places of bad reduction. Constructing and understanding such well-behaved integral models is an interesting and hard problem whose solution has many applications to number theory. The Shimura varieties we are considering here are of orthogonal type and are
quotients of hermitian symmetric domains of type IV. They are associated to spin or orthogonal groups
for a rational quadratic space $(V, Q)$ of dimension $d$ with real signature 
$(d-2, 2)$.  When $d\geq 7$, these Shimura varieties are not of PEL type and so they cannot be given directly as moduli spaces of abelian varieties with polarization, endomorphisms and level structure. However, they are always either of Hodge or of abelian type. So, they can still be constructed using a relation to Siegel moduli spaces, i.e. to moduli spaces of polarized abelian varieties. 

 Shimura varieties have canonical models over the ``reflex" number field.  In the cases we consider here the reflex field is the field of rational numbers $\Q$.  They are also expected to afford reasonable integral models. However, the behavior of these depends very much on the ``level subgroup". Here, we consider   level subgroups determined by the choice of a lattice $\Lambda\subset V$ on which the quadratic form takes integral values, i.e. for which
\[
\La\subset \La^\vee
\]
where $\La^\vee$ is the dual lattice. 
In fact, we study $p$-adic integral models and their reduction over odd primes $p$ at which the $p$-power part of the discriminant module
$\La^\vee/\La$ is annihilated by $p$. The level subgroup at $p$ is parahoric in the sense of Bruhat-Tits and this allows us to apply the results of the first author with Kisin \cite{KP}
and construct a $p$-adic integral model with controlled singularities.  We then build on this and, by using work of the second author in \cite{Zachos}, we study and resolve the singularities. This leads to regular models for these Shimura varieties
over the $p$-adic integers $\Z_p$. The models we construct have very simple local structure: Their fibers over $p$ are divisors with normal crossings,
with multiplicities one or two, and with no more than three branches intersecting at a point. We expect that our construction will find applications to the study of arithmetic intersections of special cycles and Kudla's program. (See \cite{FGHM} for an important application of integral models of spin/orthogonal Shimura varieties to number theory.)

\subsection{}  Let us give some details. For technical reasons, it is simpler to discuss Shimura varieties for the group $G=\GSpin(V, Q)$
of spinor similitudes. We take $X$ to be the corresponding hermitian symmetric domain of type IV (see \S \ref{sShimura}).
Let $p$ be an odd prime and
 choose a $\Z_p$-lattice $\La\subset V\otimes_{\Q}\Q_p$ with $p\La^\vee\subset \La\subset \La^\vee$.
The lattice defines the parahoric subgroup
\[
K_p=\{g\in \GSpin(V\otimes_{\Q}\Q_p)\ |\ g\La g^{-1}=\La, \ \eta(g)\in \Z^\times_p\}
\]
which we fix below. (Here, $\eta: \GSpin(V\otimes_{\Q}\Q_p)\to \Q_p^\times$ is the spinor similitude character,
and for $v\in V\otimes_{\Q}\Q_p$, $gvg^{-1}$ is defined using the Clifford algebra, see \S \ref{ss23}, \S \ref{ss24}.)
 Choose also a sufficiently small compact open subgroup $K^p$ of the prime-to-$p$ finite adelic points $G({\mathbb A}_{f}^p)$ of $G$
 and set $K=K^pK_p$. 
 The Shimura variety $ {\rm Sh}_{K}(G, X)$ with complex points
 \[
 {\rm Sh}_{K}(G, X)(\C)=G(\Q)\backslash X\times G({\mathbb A}_{f})/K
 \]
 is of Hodge type and has a canonical model over the reflex field $\Q$.  
 The following is a special case of \cite[Theorem 4.2.7]{KP}, see also \cite[Theorem 3.7]{Pappasicm}.

 \begin{thm}\label{LMKP}
 There is a
 scheme $\mathscr S_K(G, X)$, flat over $\Spec(\Z_{p})$, 
 with
 \[
 \mathscr S_K(G, X)\otimes_{\Z_p}\Q_p={\rm Sh}_K(G, X)\otimes_{\Q}\Q_p,
 \]
 and which supports a ``local model diagram"
  \begin{equation}\label{LMdiagramIntro}
\begin{tikzcd}
&\wti{\mathscr{S}}_K(G, X)\arrow[dl, "\pi_K"']\arrow[dr, "q_K"]  & \\
\mathscr S_K(G, X)  &&  \Mloc(\La)
\end{tikzcd}
\end{equation}
such that:
\begin{itemize}
\item[a)] $\pi_K$ is a $\calG$-torsor for the parahoric group scheme $\calG$ that corresponds to $K_p$,

\item[b)] $q_K$ is smooth and $\calG$-equivariant.
\end{itemize}
\end{thm}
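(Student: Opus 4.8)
The plan is to deduce the statement from \cite[Theorem 4.2.7]{KP} by checking that the datum $(G, X, K_p)$ meets the hypotheses there, and to recall enough of the construction to obtain the local model diagram \eqref{LMdiagramIntro} in the stated form. First one verifies the group-theoretic input: $G = \GSpin(V, Q)$ is connected reductive over $\Q$ with simply connected derived group $G_{\der} = \Spin(V, Q)$, so $\pi_1(G_{\der})$ is trivial; and since $p$ is odd, $G_{\Q_p}$ splits over a tamely ramified (at worst quadratic) extension of $\Q_p$. The condition $p\Lambda^\vee \subset \Lambda \subset \Lambda^\vee$ means $\Lambda$ is a vertex lattice, and a Bruhat--Tits computation identifies $K_p$ with the connected stabilizer $\calG(\Z_p)$ of the corresponding vertex, where $\calG$ is a parahoric $\Z_p$-group scheme with generic fiber $G_{\Q_p}$.

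Next, using that $(G, X)$ is of Hodge type, one chooses a Hodge embedding $\iota \colon (G, X) \hookrightarrow (\GSp(W, \psi), S^\pm)$ and, after rescaling $\psi$, arranges that $\iota$ carries $\Lambda$ into a $\Z_p$-lattice (chain) in $W \otimes_\Q \Q_p$ that is self-dual up to scalar; then $K_p = G(\Q_p) \cap K'_p$ for a parahoric $K'_p$ of $\GSp(W, \psi)(\Q_p)$, and the scheme-theoretic closure of $G$ in $\GL(\Lambda)$ is the parahoric $\calG$ (this last point being where tameness is used). The Siegel integral model $\mathscr{S}_{K'}(\GSp(W,\psi), S^\pm)$ is a moduli space of polarized abelian schemes with lattice-chain and prime-to-$p$ level structure, and one sets $\mathscr{S}_K(G, X)$ to be the normalization of the Zariski closure of ${\rm Sh}_K(G, X)$ inside it; flatness over $\Spec(\Z_p)$ and the identification of the generic fiber are then immediate.

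For the diagram, the local model $\Mloc(\Lambda) = \Mloc_{\calG, \mu}$ is the scheme-theoretic image of the $\mu$-orbit in the twisted affine Grassmannian attached to $\calG$, sitting inside the symplectic Rapoport--Zink local model of the lattice chain. One forms $\wti{\mathscr{S}}_K(G, X)$ as the moduli space over $\mathscr{S}_K(G,X)$ of trivializations of the de Rham realization of the universal abelian scheme that respect the crystalline/de Rham tensors cutting out $G$; this is a $\calG$-torsor, which is (a), and $q_K$ sends such a trivialization to the induced Hodge filtration, landing in $\Mloc(\Lambda)$ and being manifestly $\calG$-equivariant.

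The step I expect to be the main obstacle --- and it is the technical heart of \cite{KP} --- is the smoothness of $q_K$ in (b). Concretely one must show that $\mathscr{S}_K(G, X)$ is, \'etale-locally on the torsor $\wti{\mathscr{S}}_K(G,X)$, isomorphic to $\Mloc(\Lambda)$, for which it suffices to match complete local rings at corresponding closed points in characteristic $p$. This rests on Kisin's construction of integral canonical models together with parahoric deformation theory (Breuil--Kisin modules, or $(\calG,\mu)$-displays): the formal deformation space of a point of $\mathscr{S}_K$ is governed by deformations of the filtered Dieudonn\'e module with its tensors, and one checks that this is pro-represented by a formal neighborhood in $\Mloc(\Lambda)$, yielding that $q_K$ is formally smooth of the expected relative dimension. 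Granting this, \eqref{LMdiagramIntro} has the asserted properties.
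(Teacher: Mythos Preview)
Your proposal is correct and follows the same approach as the paper: this theorem is not proved in the paper but is stated as a special case of \cite[Theorem 4.2.7]{KP}, and the paper merely verifies the hypotheses (see the proof of Theorem~\ref{LM}), namely that $(G,X)$ is of Hodge type, $p$ is odd, $G_{\Q_p}$ splits over a tame extension, and the stabilizer group scheme $\calG_x$ is connected (the last point being established in \S\ref{ss24} via the Kottwitz homomorphism). Your sketch of the construction from \cite{KP} goes well beyond what the paper records, but it is accurate as an outline and not a different route.
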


In the above, $\calG$ is the smooth connected Bruhat-Tits group scheme over $\Spec(\Z_p)$ such that
\begin{itemize}
\item  $\calG\otimes_{\Z_p}\Q_p=G\otimes_\Q\Q_p$, 
\item  $\calG(\Z_p)=K_p$,
\end{itemize}
 and $\Mloc(\La)$ is the ``local model" as defined by the first author and X. Zhu \cite{PZ}
 (following previous work by Rapoport-Zink \cite{RZbook} and others). 
 
 The integral model $\mathscr S_K(G, X) $ satisfies several additional properties, see \cite{KP} and \S \ref{sShimura}. 
 In fact, it is ``canonical" in the sense of \cite{PaCan}. Set $\delta={\rm length}_{\Z_p}(\La^\vee/\La)$ and $\delta_*={\rm min}(\delta, d-\delta)$.   
 When $\delta_*=0$, then $\La=\La^\vee$ or $\La=\pi\La^\vee$, the parahoric group $K_p$ is hyperspecial,
 and both $\Mloc(\La)$ and $\mathscr S_K(G, X)$ are smooth over $\Z_p$.  This is the case of good reduction studied in general by Kisin in \cite{KisinJAMS}. In what follows, we will exclude the case $\delta_*=0$, which, for the goals of this paper, holds no interest.
 When $\delta_*=1$, both $\Mloc(\La)$ and $\mathscr S_K(G, X)$ are regular, as established by Madapusi Pera \cite{MP} (see also \cite[12.7.2]{HPR}).

Note that, since $\calG$ is smooth, properties (a) and (b) imply that every point of $\mathscr S_K(G, X) $ has an \'etale neighborhood which is also \'etale over the local model $\Mloc(\La)$.  The local model $\Mloc(\La)$ is a flat projective scheme  over $\Spec(\Z_p)$ with $\calG$-action. It is a $\Z_p$-model of the
 compact dual $X^\vee$ of the hermitian domain $X$. Here, the compact dual $X^\vee$ is the quadric hypersuface $\cQ(V)$ in $\bbP^{d-1}$
 which parametrizes isotropic lines $L\subset V$. Hence, the local model $\Mloc(\La)$ is a distinguished $\Z_p$-model of the quadric 
 $\cQ(V)$ obtained from the lattice $\La$. It is a ``$p$-adic degeneration" of the quadric hypersurface $\cQ(V)$.
 
 Note that $\bbP(\La)$ is a distinguished $\Z_p$-model of the projective space $\bbP(V)$. One could hope that $\Mloc(\La)$ is the flat closure of $\cQ(V_{\Q_p})\subset \bbP(V_{\Q_p})$ in $\bbP(\La)$ but this is not true, unless $\delta_*=0$ or $\delta_*=1$. The next naive guess
 is that $\Mloc(\La)$ is isomorphic to  the flat closure of $\cQ(V_{\Q_p})\subset \bbP(V_{\Q_p})\times \bbP(V_{\Q_p})$ in $\bbP(\La)\times \bbP(\La^\vee)$ but this is also not correct (see below). Nevertheless, the relation of the local model $\Mloc(\La)$ with this last flat closure, which we denote by $\cQ(\La, \La^\vee)$ and call a ``linked quadric",
 plays an important role in our discussion. Let us mention here that, local models for Shimura varieties
 of PEL type have been studied using connections to linked Grassmannians and other classical algebraic varieties
 (see \cite{PRS}). The current paper extends this to the first non-trivial non-PEL type case. The plot now is thicker because the Hodge type Shimura cocharacter of the orthogonal group is no longer minuscule in the standard representation. So, one should not really expect
a straightforward embedding of the local model in the standard linked Grassmannian.
 To understand $\Mloc(\La)$ one needs,  either to use a spin representation (which has very high dimension), or apply directly the definition of $\Mloc(\La)$ (\cite{PZ}) via 
 Beilinson-Drinfeld affine Grassmannians. The latter route was taken by the second author in his thesis \cite{Zachos} to obtain an explicit description of a dense open affine chart as we explain below. 
 
 Note that the $\calG$-action of $\Mloc(\La)$ has only a finite number of orbits (parametrized by the $\mu$-admissible set) and there is a unique closed orbit given by a distinguished point $*$ in the special fiber of $\Mloc(\La)$, which we call the ``worst point".
 
 Set $Z=(z_{ij})\in {\rm Mat}_{\delta \times (d-\delta)}$.  
Denote by $\mathscr D^{2}_{\delta\times (d-\delta)}=\{Z\ |\ \wedge^2Z=0\}\subset {\rm Mat}_{\delta \times (d-\delta)}$ the determinantal subscheme (a Segre cone) of the affine space of matrices $Z$ over $\Spec(\Z_p)$ given by the vanishing of all $2\times 2$ minors of $Z$. 
For simplicity, we assume that the quadratic form on $V\otimes_{\Q}\Q_p$ is split, or quasi-split. (We can always reduce to this case after base changing to $\Q_{p^2}$.).  The following result is essentially shown in \cite{Zachos}, but is stated there in a different form:

\begin{thm}\label{ThmexplicitIntro}
There exists an open affine chart $U\subset \Mloc(\La)$ which contains the worst point $*$ and which is isomorphic to the closed  subscheme of the determinantal scheme $\mathscr D^{2}_{\delta\times (d-\delta)}$
given by the quadratic equation
\[
\sum_{1\leq i\leq \delta,\, 1\leq j\leq d-\delta}z_{i\ d-\delta+1-j}\, z_{\delta+1-i\ j}=-4p.
\]
\end{thm}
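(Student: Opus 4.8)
The plan is to unwind the definition of the local model $\Mloc(\La)$ via Beilinson--Drinfeld affine Grassmannians following \cite{PZ}, as done in \cite{Zachos}, and to compute a chart around the worst point by explicit coordinates on the quadratic lattice. First I would set up the cocharacter data: the Shimura cocharacter $\mu$ for $\GSpin(V,Q)$ acts on the standard $d$-dimensional representation $V$ with weights $1,0^{d-2},-1$, so on the level of the orthogonal group the local model lives inside a one-step Beilinson--Drinfeld degeneration of the quadric $\cQ(V)\subset \bbP(V)$, and it suffices (by the smooth, $\calG$-equivariant morphism $q_K$ and the fact that $\GSpin\to\SO$ is an isogeny not affecting the local model) to work with the $\SO$-local model for the vertex lattice $\La$ with $p\La^\vee\subset \La\subset\La^\vee$ and $\delta=\mathrm{length}_{\Z_p}(\La^\vee/\La)$. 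Since we may base change to $\Q_{p^2}$, assume $Q$ split or quasi-split; choose a $\Z_p$-basis $e_1,\dots,e_d$ of $\La^\vee$ adapted to the form and to the filtration $\La\subset\La^\vee$, so that $\La$ is spanned by $pe_1,\dots,pe_\delta,e_{\delta+1},\dots,e_d$, and the Gram matrix of $Q$ is anti-diagonal (a ``split'' normalization) with the scaling that produces the constant $-4p$.

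Next I would identify the worst point and linearize around it. The worst point $*$ corresponds to the unique closed $\calG$-orbit in the special fiber; concretely it is the isotropic subspace (equivalently, the point of the degenerate quadric) determined by the ``most degenerate'' relative position of $\La$ and $\La^\vee$. Passing to the deformation-theoretic (or lattice-chain) description of $\Mloc(\La)$, a point near $*$ is a pair of lattices, or rather a rank-one submodule $\mathcal{L}\subset\La\otimes\calO$ isotropic for $Q$ together with the linking condition coming from $\La\subset\La^\vee$; writing this submodule as the graph of a matrix $Z=(z_{ij})\in\mathrm{Mat}_{\delta\times(d-\delta)}$ in the chosen basis, the condition that $\mathcal{L}$ have rank one (i.e. that it actually comes from the affine Grassmannian stratum cut out by $\mu$, not just a sublattice) is exactly $\wedge^2 Z=0$, which lands us in the Segre cone $\mathscr D^2_{\delta\times(d-\delta)}$. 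Then the single remaining equation is the isotropy condition $Q|_{\mathcal L}=0$ together with the duality/linking normalization: expanding $Q$ on the graph in the anti-diagonal basis produces precisely $\sum_{1\le i\le\delta,\,1\le j\le d-\delta} z_{i\ d-\delta+1-j}\,z_{\delta+1-i\ j}$, and the constant term $-4p$ is produced by the cross term between the ``$p$-scaled'' block (coming from $p\La^\vee\subset\La$) and its dual block, with the factor $4$ an artifact of the hyperbolic normalization of the form (two hyperbolic planes contribute, or equivalently $Q$ vs. the associated bilinear form contributes a factor $2$, squared-paired up). Finally I would check that this locally closed subscheme is $\calG$-stable, contains $*$, is open in $\Mloc(\La)$ (it is the non-vanishing locus of a suitable Plücker-type coordinate), and that the scheme-theoretic equations match — i.e. no further relations are needed — by a dimension count: both sides are flat over $\Z_p$ of the expected dimension $d-2$, and the generic fiber is the smooth affine quadric chart, so flatness plus agreement on the generic fiber forces equality.

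The main obstacle, as the excerpt itself flags, is that the orthogonal Shimura cocharacter is not minuscule in the standard representation, so there is no clean embedding of $\Mloc(\La)$ into a linked Grassmannian and one cannot simply read off the equations from a Plücker picture. Concretely, the difficulty is showing that the naive ``linked quadric'' $\cQ(\La,\La^\vee)$ — the flat closure in $\bbP(\La)\times\bbP(\La^\vee)$ — is \emph{not} the local model, and that the correct chart is instead the $\mathscr D^2$-Segre-cone refinement cut by the one quadric; this requires genuinely working in the Beilinson--Drinfeld affine Grassmannian as in \cite{Zachos} rather than with classical moduli of lattices, and carefully tracking how the condition ``the sublattice is $p$-torsion-free of the right coweight'' translates to $\wedge^2 Z=0$. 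A secondary, more bookkeeping-level obstacle is pinning down the exact constant: one must fix the normalization of $Q$ on $\La^\vee$, the identification of $\La$ inside it, and the scaling in the affine Grassmannian parameter (the uniformizer vs.\ $p$), and verify these conspire to give exactly $-4p$ and not, say, $-p$ or $-2p$; this is where I would be most careful, likely by testing the formula against the known regular cases $\delta_*=1$ (treated by Madapusi Pera \cite{MP}) where the chart must reduce to the classical one.
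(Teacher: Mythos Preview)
Your proposal identifies the correct general strategy (Beilinson--Drinfeld affine Grassmannian, explicit matrix coordinates, flatness plus dimension count at the end), but there is a genuine gap in the middle that is the heart of the argument.

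The issue is your parameterization of the chart. You write that a point near $*$ is ``a rank-one submodule $\mathcal L\subset \La\otimes\calO$'' whose graph is a matrix $Z\in{\rm Mat}_{\delta\times(d-\delta)}$. But an $R$-point of $\Mloc(\La)$ (or of ${\rm M}^{\rm naive}$) is a full-rank $R[u]$-lattice $\calL$ sitting between $(u-\pi)\bbL_R$ and $(u-\pi)^{-1}\bbL_R$, not a line. On the natural affine chart around $*$ one writes $\bar\calL=\{v+X(u-\pi)^{-1}v\}$ for a \emph{$d\times d$} matrix $X$ (and similarly a $d\times d$ matrix $Y$ for the dual), and the defining conditions translate into a large system: $Y+X^t=0$, $X^tY=0$, $\wedge^2X=0$, $\wedge^2Y=0$, together with two pairs of quadratic matrix equations coming from $\calL\subset\calL^\vee$ and $\calL^\vee\subset u^{-1}\calL$. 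The matrix $Z$ you want is only a $\delta\times(d-\delta)$ \emph{block} of $X$; it does not appear as a coordinate system a priori.

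The substantive work you have skipped is the elimination: one must show that all the other blocks of $X$ (called $A$, $C_i$, $D_j$ in the paper) lie in the ideal generated by explicit quadratic expressions in the entries of $Z$, so that the coordinate ring is actually a quotient of $\O[Z]$. This uses the full system of matrix equations above, not just ``rank one plus isotropy''. Only after this elimination does the single trace relation ${\rm Tr}(A)+2\pi=0$ survive as the quadratic hypersurface equation in $\mathscr D^2_{\delta\times(d-\delta)}$, and only then does the dimension/flatness argument you sketch at the end apply. Your claim that ``the isotropy condition $Q|_{\mathcal L}=0$'' directly produces the displayed quadric is not right: the raw isotropy conditions are the matrix equations $X^tS_1X-2\pi SX=0$ and $X^tS_2X+2SX=0$, and extracting a single scalar equation from these requires the block elimination first. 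Likewise the constant $-4p$ is not a normalization artifact to be guessed and checked against $\delta_*=1$; it drops out of ${\rm Tr}(A)=-2\pi$ once $A$ is expressed in terms of $Z$ (the factor of $2$ in ${\rm T}(Z)=\frac12\sum z_{i,d-\delta+1-j}z_{\delta+1-i,j}$ accounts for the $4$).
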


Observe that the open subscheme $U$ intersects every $\calG$-orbit in $\Mloc(\La)$, since it contains the unique closed $\calG$-orbit $*$. Hence,
$U$ ``captures all the singularities" of $\Mloc(\La)$ and hence also of  $\mathscr S_K(G, X) $. 
When $\delta_*=1$, it is an open affine in the quadric $\cQ(\La)$. The case $\delta_*=1$ was studied in \cite{MP} and \cite{HPR}.
For $\delta_*\geq 2$, the schemes $\Mloc(\La)$ and $U$ are harder to understand. Let us add that, for $\delta_*\neq 0$, none of these models are smooth or semi-stable (as also follows from the result of \cite{HPR}). The level subgroup which is the stabilizer
of a pair of lattices $(\La_0, \La_1)$ with $\pi\La_0\subset \Lambda_1\subset \Lambda_0$ and $\La^\vee_0=\La_0$, $\La_1^\vee=p^{-1}\La_1$, gives a semi-stable model, as was first shown by Faltings \cite{Faltings}, but it is not of the type considered here.

 \subsection{} We can now consider the blow-up of $\Mloc(\La)$ 
 at the worst point $*$. This gives a $\calG$-birational projective morphism
\[
r^{\rm bl}: \Mbl(\La)\tol \Mloc(\La).
\] 
 Using the explicit description of $U$ above, we show:

\begin{thm}\label{BUpIntro}  The scheme $\Mbl(\La)$ is regular and has special fiber a divisor with normal crossings. In fact, $\Mbl(\La)$ is covered by open subschemes which are smooth over $
\Spec(\Z_p[u, x, y]/(u^2xy-p))
$
when $\delta_*\geq 2$, or over
$
\Spec(\Z_p[u, x]/(u^2x-p))
$
when $\delta_*= 1$. 
\end{thm}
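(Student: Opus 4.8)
The plan is to reduce everything to a by-hand computation of the blow-up of the explicit affine chart $U$ from Theorem~\ref{ThmexplicitIntro}. Since $U$ contains the unique closed $\calG$-orbit $*$, its $\calG$-translates cover $\Mloc(\La)$, and the formation of the blow-up of $*$ commutes both with restriction to the open $U$ and with the $\calG$-action (which fixes $*$ and preserves the special fibre); as regularity and the normal crossings property can be read off local rings, and $\calG$ acts by automorphisms over $\Z_p$, it is enough to prove that $\mathrm{Bl}_*U$ is regular, has normal crossings special fibre with multiplicities one or two and at most three branches, and is covered by opens of the asserted type. The starting point is that in $A=\mathcal{O}(U)$ the defining quadratic equation of Theorem~\ref{ThmexplicitIntro} can be read as
\[
p=-\tfrac14\sum_{i,j}z_{i,\,d-\delta+1-j}\,z_{\delta+1-i,\,j}\in (z_{ij})^2\cdot A,
\]
so the maximal ideal of $*$ is $\mathfrak m_*=(z_{ij})\cdot A$ and therefore $\mathrm{Bl}_*U=\mathrm{Bl}_{(z_{ij})}U$ is the strict transform of $U$ inside $\mathrm{Bl}_{(z_{ij})}\bbA^{\delta(d-\delta)}_{\Z_p}$.

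The key local computation is the blow-up of the Segre cone. In the chart of $\mathrm{Bl}_{(z_{ij})}\bbA^{\delta(d-\delta)}_{\Z_p}$ where $z_{kl}$ is inverted, the determinantal relations $\wedge^2Z=0$ force $z_{ij}=z_{kl}\,a_i\,b_j$ with $a_k=b_l=1$, so the strict transform of $\mathscr D^{2}_{\delta\times(d-\delta)}$ in this chart is the smooth affine space $\bbA^{d-1}_{\Z_p}=\Spec\Z_p[z_{kl},\,a_i\;(i\neq k),\,b_j\;(j\neq l)]$. Substituting $z_{ij}=z_{kl}a_ib_j$ into the quadratic form $\sum_{i,j}z_{i,\,d-\delta+1-j}\,z_{\delta+1-i,\,j}$ gives $z_{kl}^2\,\widetilde Q_1(a)\,\widetilde Q_2(b)$, where $\widetilde Q_1=\sum_i a_i a_{\delta+1-i}$ and $\widetilde Q_2=\sum_j b_j b_{d-\delta+1-j}$ are the dehomogenisations (at $a_k=1$, respectively $b_l=1$) of split quadratic forms. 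Since $z_{kl}$ does not divide $z_{kl}^2\widetilde Q_1\widetilde Q_2+4p$, the strict transform of $U$ in this chart is precisely the hypersurface
\[
z_{kl}^2\,\widetilde Q_1(a)\,\widetilde Q_2(b)+4p=0
\]
of $\bbA^{d-1}_{\Z_p}$. Regularity of $\mathrm{Bl}_*U$ is then immediate: at every point of the special fibre the summand $4p$ is part of a regular system of parameters of $\bbA^{d-1}_{\Z_p}$ while the other summand involves no $p$, so the defining equation lies nowhere in the square of the maximal ideal, and the Jacobian criterion gives regularity.

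For the local structure I would run through the charts, using that $2\in\Z_p^\times$ because $p$ is odd. If $k\neq \delta+1-k$, then $a_{\delta+1-k}$ occurs in $\widetilde Q_1$ with the unit coefficient $2$, so $x:=-\tfrac14\widetilde Q_1$ may be used as a coordinate in place of $a_{\delta+1-k}$; if also $l\neq d-\delta+1-l$, put $y:=\widetilde Q_2$. In such a chart the equation becomes $u^2xy-p=0$ with $u=z_{kl}$, and the chart is literally $\bbA^{N}_{\Z_p}\times_{\Z_p}\Spec\Z_p[u,x,y]/(u^2xy-p)$ for a suitable $N$, hence smooth over $\Spec\Z_p[u,x,y]/(u^2xy-p)$; such a chart exists exactly when $\delta\geq 2$ and $d-\delta\geq 2$, i.e.\ when $\delta_*\geq 2$. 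If $k=\delta+1-k$ (which forces $\delta$ odd), then $\widetilde Q_1=1+(\text{nondegenerate quadratic form})$ is a unit near $*$; after passing to the étale cover on which $\sqrt{\widetilde Q_1}$ exists and absorbing it into $z_{kl}$, the product $z_{kl}^2\widetilde Q_1$ becomes the square of a coordinate, and the chart acquires, étale-locally on the source, the form $\bbA^{N}_{\Z_p}\times_{\Z_p}\Spec\Z_p[u,x]/(u^2x-p)$ when exactly one of $\widetilde Q_1,\widetilde Q_2$ is a unit, and $\bbA^{N}_{\Z_p}\times_{\Z_p}\Spec\Z_p[u]/(u^2-p)$ when both are. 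As the assertions of the theorem are étale-local, this yields the stated structure; when $\delta_*=1$, say $\delta=1$, the factor $\widetilde Q_1$ is identically $1$, so only the forms $\Z_p[u,x]/(u^2x-p)$ and $\Z_p[u]/(u^2-p)$ occur.

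Finally, in every chart the special fibre is $V(z_{kl}^2\widetilde Q_1\widetilde Q_2)\subset\bbA^{d-1}_{\Ff_p}$: the exceptional divisor $\{z_{kl}=0\}$ with multiplicity two, together with the (possibly empty) hypersurfaces $\{\widetilde Q_1=0\}$ and $\{\widetilde Q_2=0\}$ with multiplicity one. As $\widetilde Q_1$ and $\widetilde Q_2$ are nondegenerate they define smooth quadrics, and because $z_{kl}$, the $a_i$ and the $b_j$ are pairwise disjoint groups of coordinates these three loci meet transversally; hence the special fibre is a normal crossings divisor with multiplicities one or two and with at most three branches through any point. Transporting the conclusions over $\Mloc(\La)$ by the $\calG$-action finishes the proof. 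The genuinely delicate step is the middle one: controlling the blow-up of the Segre cone so as to arrive at the clean equation $z_{kl}^2\widetilde Q_1\widetilde Q_2+4p=0$ on affine space, and then keeping track of the coordinate changes uniformly over all charts --- in particular the ``central'' charts, where a form $\widetilde Q_i$ degenerates to a unit and one must work over an étale cover.
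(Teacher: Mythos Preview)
Your argument follows the paper's route essentially line for line: reduce to the blow-up of $U$ at the ideal $(z_{ij})$, compute the strict transform of the Segre cone, land on the hypersurface $z_{kl}^{2}\,\widetilde Q_{1}(a)\,\widetilde Q_{2}(b)+4p=0$ in $\bbA^{d-1}$, and read off regularity and the normal-crossings structure of the special fibre directly from this factored equation. The paper's version of the last step is the bare assertion that $(z_{st},-S_{1}/2,S_{2}/2)$ is a smooth morphism to $\Spec(\br\O[u,x,y]/(u^{2}xy-\pi))$ on every chart; your non-central charts reproduce exactly this.

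You are right to single out the ``central'' charts $k=\delta+1-k$ or $l=d-\delta+1-l$---in fact the paper's map is \emph{not} smooth there, since at the point with all $a_{i}=0$ one has $\widetilde Q_{1}=1$ and $d\widetilde Q_{1}=0$, so the cotangent map kills $dx$. But your remedy does not establish what the theorem claims. Extracting $\sqrt{\widetilde Q_{1}}$ on an étale cover yields an \emph{étale-local} model $\bbA^{N}\times\Spec\Z_{p}[u,x]/(u^{2}x-p)$, whereas the theorem asserts a \emph{Zariski} cover by opens carrying a smooth morphism to $\Spec\Z_{p}[u,x,y]/(u^{2}xy-p)$, and there is no smooth map from the former target to the latter (the dimensions are $2$ and $3$). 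A fix that stays Zariski-local: since $\delta_{*}\ge 2$ there is a free coordinate, say $a_{1}$; on the open where $1+a_{1}$ is a unit, take $u=z_{kl}(1+a_{1})$, $x=-\widetilde Q_{1}/(2(1+a_{1})^{2})$, $y=\widetilde Q_{2}/2$ (and twist $y$ analogously by $1+b_{1}$ if $l$ is also central). At the bad point one then gets $(du,dx,dy)=(dz_{kl},da_{1},d\widetilde Q_{2}/2)$, which are independent; the remaining points where this modified map drops rank have some $a_{i}\ne 0$ with $i\ne k$, hence lie in a non-central chart and are covered by the original map. Both your write-up and the paper's skip this bookkeeping.
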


We   quickly see that the corresponding blow-up ${\mathscr{S}}^{\rm reg}_K(G, X)$ of the integral model $\mathscr S_K(G, X) $
inherits the same nice properties as $\Mbl(\La)$. In fact, there is a local model diagram for ${\mathscr{S}}^{\rm reg}_K(G, X)$ similar to (\ref{LMdiagramIntro})  
but with $\Mloc(\La)$ replaced by $\Mbl(\La)$. See Theorem \ref{LM} for the precise statement about the model $\mathscr S^{\rm reg}_K(G, X) $; this theorem is the main result of the paper. The construction of ${\mathscr{S}}^{\rm reg}_K(G, X)$ from $r^{\rm bl}$ and the
local model diagram (\ref{LMdiagramIntro}) is an example of a ``linear modification" in the sense of \cite{PaUnitary}.

\subsection{} To understand the full picture,
we also give two alternative descriptions of the resolution $r^{\rm bl}: \Mbl(\La)\to \Mloc(\La)$: 

The first is partially moduli-theoretic and is inspired by a classical idea: The determinantal scheme
$\mathscr D^2_{n\times m}=\{A\in {\rm Mat}_{n\times m}\ |\ \wedge^{2} A=0\}$ can be resolved by considering
\[
\wti {\mathscr D}^2_{n\times m}=\{(A, L)\in {\rm Mat}_{n\times m}\times \bbP^{m-1}\ |\ \wedge^{2} A=0,\ {\rm Im}(A)\subset L\}
\]
which maps birationally to $\mathscr D^2_{n\times m}$ by the forgetful map $(A, L)\mapsto A$.
This leads to  the definition of a $\calG$-equivariant birational morphism
\[
r: \calM(\La)\tol \Mloc(\La)
\]
which we then show agrees with the blow-up $r^{\rm bl}$ via a $\calG$-equivariant isomorphism 
\begin{equation}\label{iso1}
\calM(\La)\simeq \Mbl(\La).
\end{equation}

The second description relates $\Mloc(\La)$ to the classical linked quadric $\cQ(\La, \La^\vee)$. We show that there is a
blow-up 
$\wti\cQ(\La, \La^\vee)\tol \cQ(\La, \La^\vee)$ along an irreducible component  of the special fiber of $\cQ(\La, \La^\vee)$
(this component is a divisor which is not Cartier) and a $\calG$-equivariant isomorphism
\begin{equation}\label{iso2}
\wti\cQ(\La, \La^\vee)\simeq \Mbl(\La).
\end{equation}
The proofs of the two isomorphisms (\ref{iso1})  and (\ref{iso2}) are intertwined.
As a result, we have a diagram of $\calG$-equivariant birational projective morphisms
 \begin{equation}\label{eqDiagramIntro}
\begin{tikzcd}
&\Mbl(\La) \arrow[dl, "r^{\rm bl}"'] \arrow[dr, "\rho"] &\\
\Mloc(\La) &&  \cQ(\La, \La^\vee)
\end{tikzcd}
\end{equation}
and we can pass from the linked quadric $\cQ(\La, \La^\vee)$ to $\Mloc(\La)$ as follows:   
We first blow-up along an irreducible component $Z_1$  (a non-Cartier divisor, if $\delta_*\geq 2$)   of the special fiber of $\cQ(\La, \La^\vee)$. Then we blow the proper transform $\bbP^{\delta-1}\times {\bbP}^{d-\delta-1}=\wti Z_0\subset \wti\cQ(\La, \La^\vee)$ of another irreducible component $Z_0$ down to the point $*$, to obtain $\Mloc(\La)$. 

Let us mention here that families of degenerating quadrics, like the schemes $\Mloc(\La)$ and $\cQ(\La, \La^\vee)$, are objects of the old theory of complete quadrics (see \cite{CompleteQ} and the references there). An interesting problem is to reinterpret our constructions within the framework of this classical theory.

\subsection{} In the above, we only discussed Shimura varieties for $G=\GSpin(V)$. However, the results also apply to $G=\SO(V)$ (see \S \ref{ss62b}), and can be used for other related groups. For example, we can consider groups of the form ${\rm Res}_{F/\Q}(G_F)$, where $G_F=\GSpin(V_F)$, or $G=\SO(V_F)$, for a quadratic space $V_F$ over the number field $F$, provided $F$ is unramified over $p$.
In that case, the local models over $p$ are given as $d$-fold products of the local models above and we can construct integral models
of Shimura varieties by using the products of the corresponding resolutions. After certain explicit blow-ups of these products, we can also obtain regular integral models. Similarly, we can apply these results to obtain regular (formal) models of the corresponding Rapoport-Zink spaces constructed in  \cite[\S 5]{HamaKim}.
\medskip

{\bf Acknowledgements:}
We thank B. Howard, M. Rapoport, and the referee, for useful suggestions.

\section{Preliminaries: quadratic forms, lattices and spinors}\label{s2}

\subsection{Quadratic forms and lattices}\label{ss21} Let us fix an odd prime $p$ and consider a finite field extension $F/\Q_p$. 
Let $\O=\O_F$ be the ring of integers
and let $\pi$ be a uniformizer of $\O$. We will denote by $k=\O/(\pi)$ the residue field and by $\bar k$ 
an algebraic closure of $k$. Denote also by $\br F$ the completion of the maximal unramified extension of $F$ in an algebraic closure $\bar F$ and by $\br O$ the integers of $\br F$. We can assume $\bar k=\br\O/(\pi)$. 

Let $V$ be an $F$-vector space of dimension $d$ and fix
\[
\lan\ ,\ \ran: V \times V\to F,
\]
a non-degenerate symmetric $F$-bilinear form. 
Set
\[
Q(v)=\frac{1}{2}\lan v, v\ran,
\]
for the associated quadratic form $Q: V\to F$.
We assume that $d\geq 5$ throughout. 

If $\La\subset V$ is an $\O$-lattice, we set
\[
\Lambda^\vee=\{v\in V\ |\ \lan v, a\ran\in \O,\ \forall a\in \Lambda\}
\]
for the dual of $\Lambda$. In what follows, we will   fix a ``vertex lattice" $\Lambda$ in $V$. By definition this is an 
$\O$-lattice $\La$ such that 
\[
\pi\Lambda^\vee\subset \Lambda\subset \Lambda^\vee.
\]
Set $\delta(\La):={\rm length}_\O(\Lambda^\vee/\Lambda)=\dim_k(\Lambda^\vee/\Lambda)$, $\delta_*={\rm min}(\delta, d-\delta)$.
We will often assume $\delta(\Lambda)\leq d/2$. Indeed, we can always
replace the form by a multiple and exchange the roles of $\La$ and $\La^\vee$ to come to this situation.
Then $\delta_*=\delta$.

The form $\lan\ ,\ \ran$ induces symmetric $\O$-bilinear forms 
\[
\lan\ ,\ \ran: \Lambda\times\Lambda \to \O, \quad \pi\lan\ ,\ \ran : \Lambda^\vee\times \Lambda^\vee \to \O.
\]
We also obtain perfect symmetric $k$-bilinear forms
\[
\lan\, ,\, \ran_1: \Lambda/\pi\Lambda^\vee \times \Lambda/\pi\Lambda^\vee \to k, \qquad 
\lan\, ,\, \ran_2: \Lambda^\vee/\Lambda \times \Lambda^\vee/\Lambda \to k,
\]
given by $\lan x, y\ran_1=\lan x, y \ran\, {\rm mod}\, (\pi)$, $\lan x, y\ran_2=\pi\lan x, y \ran\, {\rm mod}\, (\pi)$.

\subsection{Normal forms}\label{ss42a}

As in  \cite[Appendix, Lemma A.25]{RZbook}, 
we can write 
\[
\La=M\oplus N, \quad \La^\vee=M\oplus \pi^{-1}N,
\]
where $M$, $N$ are free $\O_{F}$-submodules such that $\lan M, N\ran= 0$ and with the property that the form $\lan\ ,\ \ran$ is perfect on
$M$ and $\pi^{-1}\lan\ ,\ \ran$ is perfect on $N$. In fact, by \emph{loc. cit.} Prop. A.21, after a finite unramified base change $F'/F$ we can find an $\O_{F'}$-basis $\{e_i\}$ of $\La\otimes_{\O_F}\O_{F'}$, i.e. write
\[
\La_{\O_{F'}}= \Lambda\otimes_{\O_F}\O_{F'} =\bigoplus_{i=1}^d \O_{F'}\cdot e_i
 \]
such that exactly one of the following cases occurs:
\medskip

    (1) $d=2n$, $\delta(\Lambda)=2r$:  
  \[
    \La_{\O_{F'}}=(e_1, \ldots, e_{n-r})\oplus (e_{n-r+1},\ldots , e_{n+r})\oplus (e_{n+r+1}, \ldots, e_d),
\]
\[
M=(e_1, \ldots, e_{n-r})\oplus (e_{n+r+1}, \ldots, e_d),\quad
 N=(e_{n-r+1},\ldots , e_{n+r}),
 \]
 \begin{eqnarray*}
 \hbox{\rm with\ \ \ }   \langle e_i , e_{d+1-j} \rangle\! \!&=& \delta_{ij}, \ \ \hbox{\rm for\ }\ i<n-r+1, \ \ \hbox{\rm or\ } \ n+r<i, \\
    \langle e_i , e_{d+1-j} \rangle\! \!&=& \pi \delta_{ij}, \ \hbox{\rm for\ }\ n-r+1\leq i\leq  n+r. 
    \end{eqnarray*}
    
    \smallskip
    
    (2)  $d=2n$, $\delta(\Lambda)=2r+1$:  
  
    \[
\La_{\O_{F'}}=(e_1, \ldots, e_{n-r-1})\oplus  (e_{n-r},\ldots , e_{n-1})\oplus (e_n, e_{n+1})\oplus (e_{n+2},\ldots , e_{n+r+1})\oplus (e_{n+r+2}, \ldots, e_d),
\]
\[
M=(e_1, \ldots, e_{n-r-1})\oplus (e_{n+1})\oplus (e_{n+r+2}, \ldots, e_d),
\]
\[
N=(e_{n-r},\ldots , e_{n-1})\oplus (e_n)\oplus (e_{n+2},\ldots , e_{n+r+1}),
\]
  \begin{eqnarray*}
    \hbox{\rm with\ \ \ }    \langle e_i , e_{d+1-j} \rangle\! \!&=& \delta_{ij}, \ \ \hbox{\rm for\ }\  i< n-r,\ \hbox{\rm or} \ n+r+1<i, \\
    \langle e_i , e_{d+1-j} \rangle &=& \pi \delta_{ij}, \ \ \hbox{\rm for\ }\ n-r\leq i\leq n+r+1,\ i\neq n,\ i\neq n+1, \\
    \ \langle e_n ,  e_n \rangle &=& \pi, \quad \langle e_{n+1} ,  e_{n+1} \rangle = 1, \quad  \langle e_n , e_{n+1} \rangle = 0.
    \end{eqnarray*}
\smallskip

    (3)   $d= 2n+1$, $\delta(\Lambda)=2r$:  
   
    \[
\La_{\O_{F'}}=(e_1, \ldots, e_{n-r})\oplus  (e_{n-r+1},\ldots , e_{n})\oplus (e_{n+1})\oplus (e_{n+2},\ldots , e_{n+r+1})\oplus (e_{n+r+2}, \ldots, e_d),
\]
\[
M=(e_1, \ldots, e_{n-r})\oplus (e_{n+1})\oplus (e_{n+r+2}, \ldots, e_d),
\]
\[
N=(e_{n-r+1},\ldots , e_{n})\oplus (e_{n+2},\ldots , e_{n+r+1}),
\]
  \begin{eqnarray*}
  \hbox{\rm with\ \ \ }      \langle e_i , e_{d+1-j} \rangle &=& \delta_{ij}, \ \ \hbox{\rm for\ }\ i<n+1-r, \
    \hbox{\rm or}\ i=n+1, \ \hbox{\rm or}\ n+1+r<i, \\
    \ \langle e_i , e_{d+1-j} \rangle &=& \pi \delta_{ij}, \ \ \hbox{\rm for\ }\ n+1-r\leq i\leq n+1+r,\ \hbox{\rm and\ }\ i\neq n+1.
    \end{eqnarray*}
\smallskip

 (4)  $d= 2n+1$, $\delta(\Lambda)=2r+1$:  
  
    \[
\La_{\O_{F'}}=(e_1, \ldots, e_{n-r})\oplus (e_{n-r+1},\ldots , e_{n+r+1})\oplus (e_{n+r+2}, \ldots, e_d),
\]
\[
M=(e_1, \ldots, e_{n-r})\oplus (e_{n+r+2}, \ldots, e_d),\quad
N=(e_{n-r+1},\ldots , e_{n+r+1}),
\]
  \begin{eqnarray*}
  \hbox{\rm with\ \ \ }      \langle e_i , e_{d+1-j} \rangle &=& \delta_{ij}, \ \ \hbox{\rm for\ }\ i<n+1-r, \ \hbox{\rm or}\ n+1+r <i, \\
    \langle e_i , e_{d+1-j}\ran &=& \pi \delta_{ij}, \ \ \hbox{\rm for\ }\ n+1-r\leq i\leq  n+1+r .
    \end{eqnarray*} 
\smallskip

 In all the above, the parentheses give a short-hand notation for the $\O_{F'}$-lattice generated by the included vectors.
For simplicity,  we omit the notation of the base change of $M$, $N$.
In all cases, we will denote by $S$ the (symmetric) matrix with entries $\lan e_i, e_j\ran$ where $\{e_i\}$ is the basis above.
We can then write 
\[
S=S_1+\pi S_2
\]
where $S_1$, $S_2$ both have entries only $0$ or $1$. For example, in case (1) we write:
\[
S_1 := \begin{pmatrix} 
          &  & 1^{(n-r)}  \\
          & 0^{(2r)}  & \\
         1^{(n-r)} &  &   \\
         
    \end{pmatrix},\quad     S_2:=\begin{pmatrix} 
          &  & 0^{(n-r)}  \\
          & 1^{(2r)}  & \\
         0^{(n-r)} &  &   \\
        \end{pmatrix}.
        \]

\subsection{Spinor groups}\label{ss23}

The Clifford algebra  of $V$ is a $\Z/2\Z$-graded $F$-algebra denoted
\[
C(V)=C^+(V) \oplus C^- (V). 
\]
It is a vector space of rank $2^d$ over $F$, generated as an algebra by the image of a canonical injection
$V\hookrightarrow C^-(V)$ satisfying $v\cdot v=Q(v)$.  The {canonical involution} on $C(V)$ is the    
$F$-linear endomorphism $c\mapsto c^*$ characterized by 
 $
 (v_1 \cdots v_m)^*=v_m\cdots v_1,
 $
 for $v_1,\ldots, v_m\in V$.

For an $F$-algebra $R$,  the tensor product   $V_R=V\otimes_{F} R$ is a nondegenerate quadratic space over $R$ 
with  Clifford algebra  $C(V_R)=C(V)\otimes_{F}R$.
  The {spinor similitude group}   $G=\GSpin(V)$   is the reductive group over $F$ with $R$-points
\[
\GSpin(V)(R) = \{ g\in C^+(V_R)^\times : g V_R g^{-1} =V_R,\, g^*g \in R^\times \},
\]
 and the {spinor similitude}  
 $
 \eta :\GSpin(V) \to \Gm
 $ 
 is the character  $\eta (g) = g^* g$.

The conjugation action of $G$ on $C(V)$ leaves invariant the $F$-submodule $V$, and this action of $G$ on $V$ is denoted
$g\cdot v = gvg^{-1}$.  There is a short exact sequence of reductive group schemes
\begin{equation}\label{ces1}
1 \to \Gm \to G \xrightarrow{g\mapsto g \cdot} \SO(V) \to 1
\end{equation}
over $F$, and the restriction of $\eta$  to the central $\Gm$ is  $z\mapsto z^2$.

 The spinor group $\Spin(V)$ is the kernel of $\eta$ and there is a short exact sequence
 of reductive groups
 \begin{equation}\label{ces2}
 1\to \Spin(V)\to G=\GSpin(V)\xrightarrow{\eta} \Gm\to 1.
 \end{equation}
 We have $G_{\rm der}=\Spin(V)$ which is simply connected. Hence, $\pi_1(G)=\pi_1(\Gm)\simeq \Z$
 with trivial Galois action. In particular, we have 
 $
 \pi_1(G)_I\simeq \Z
 $
 for the coinvariants of the action by the inertia group $I={\rm Gal}(\bar F/\breve F)$. Recall, $\pi_1(G)_I$ is the target
 of the Kottwitz homomorphism
 \[
\kappa:  G(\breve F)\to \pi_1(G)_{I}.
 \]
 Hence, in our case, the Kottwitz homomorphism is
 \[
 \kappa: \GSpin(V)(\breve F)\to \Z.
 \]
 In fact, by the definition of $\kappa$ and (\ref{ces2}), we can see that 
 \begin{equation}\label{kot}
 \kappa(g)={\rm val}(\eta(g)),
 \end{equation}
  i.e. $\kappa$ is given by the valuation of the spinor similitude.

\subsection{Orthogonal parahoric groups}\label{ss22} We first consider the special orthogonal group $\SO(V)$.
The self-dual lattice chain 
\[
\cdots\subset \pi\La^\vee\subset \La\subset \La^\vee\subset \pi^{-1}\La\subset\cdots
\] gives a point $x_\La$ of the Bruhat-Tits building $\calB(\SO(V), F)$ (see for example \cite{BTIV}). Let us consider the subgroup 
\[
K=K_\La=\{g\in \SO(V)\ |\ g(\Lambda)=\Lambda\} \subset \SO(V)
\]
which preserves the lattice. Then every element $g\in K$ also preserves the dual lattice, $g(\La^\vee)=\La^\vee$,
and so it induces elements $\bar g_1\in \GL(\Lambda/\pi\Lambda^\vee)$, $\bar g_2\in \GL(\Lambda^\vee/\Lambda)$,
which lie in the corresponding orthogonal groups for the forms $\lan\ ,\ \ran_1$, $\lan\ ,\ \ran_2$.
We can see that $g\mapsto (\bar g_1, \bar g_2)$ gives a group homomorphism 
\[
K\to {\rm O}(\Lambda/\pi\Lambda^\vee)\times {\rm O}(\Lambda^\vee/\Lambda).
\]
Composing with 
\[
\det\times \det: {\rm O}(\Lambda/\pi\Lambda^\vee)\times {\rm O}(\Lambda^\vee/\Lambda)\to \{\pm 1\}\times \{\pm 1\}
\]
gives $\varepsilon: K\to \{\pm 1\}\times \{\pm 1\}$. The corresponding parahoric subgroup $K^\circ\subset K$ is the kernel of $\varepsilon$.
(See \cite{BTIV} and  \cite[Example 3.12]{Tits}. The map $\varepsilon$ can also be related to the Kottwitz homomorphism for $\SO(V)$
which is given by the spinor norm.) 

Set $i: \Lambda\to \Lambda^\vee$, $j: \pi\Lambda^\vee\to \Lambda$ for the natural $\O$-linear inclusions. We have 
$i\cdot j=\pi$. 

Let $R$ be an $\O$-algebra. For simplicity, set $\La_R=\Lambda\otimes_\O R$, $\La^\vee_R=\La^\vee\otimes_\O R$.
We identify
\[
\La^\vee_R={\rm Hom}_R(\La_R, R)
\]
using the form $\lan\ ,\ \ran_R=\lan\ ,\ \ran\otimes_\O R$. 
Consider the group scheme $\calG=\calG_\La$ over $\Spec(\O)$ 
which  has $R$-valued points given by
$
g \in \GL(\Lambda_R) 
$
for which 
\begin{equation}\label{groupScheme}
\begin{matrix} \pi\La^\vee_R &\xrightarrow{j_R} &\La_R&\xrightarrow{i_R} & \La^\vee_R\\
\pi g^\vee\downarrow\ \ \ \ && g\downarrow &&\ \ \ \downarrow g^\vee\\
 \pi\La^\vee_R &\xrightarrow{j_R} &\La_R&\xrightarrow{i_R} & \La^\vee_R
\end{matrix}
\end{equation}
commutes, and with $\det(g)=1$, $\det(g^\vee)=1$. In the above, $g^\vee $ denotes the $R$-linear map which is $R$-dual to 
$g: \La_R\to \La_R$.  

As we can see, using Appendix \cite{RZbook}, the group $\calG$ is smooth and has $\O$-points given by $K$. 
The homomorphism $\varepsilon$ above is the composition  
\[
K=\calG(\O)\to \calG(k)\to \{\pm 1\}\times \{\pm 1\}
\]
and the kernel gives the neutral component $\calG^\circ$ of $\calG$.
As in \cite{BTIV}, $\calG=\calG_\La$, $\calG^\circ=\calG^\circ_\La$, are the Bruhat-Tits group schemes that
correspond to $x_\La$, or, in other words, to $K$, resp. $K^\circ$.
 
 \subsection{Spinor parahoric groups}\label{ss24}
 
 Consider the (extended) Bruhat-Tits building $\calB^e(G, F)$ for $G=\GSpin(V)$ over $F$. The central exact sequence (\ref{ces1}) induces by \cite[(4.2.15)]{BTII}, or \cite[Theorem 2.1.8]{La}, a canonical $G(F)$-equivariant map
 \[
 \calB^e(G, F)\to \calB^e(\SO(V), F)=\calB(\SO(V), F)
 \]
 which lifts an identification
 \[
 \calB(G, F)\xrightarrow{\sim} \calB(\SO(V), F)
 \]
  between the (classical) Bruhat-Tits buildings. Consider now $x\in \calB^e(G, F)$ which we assume maps 
  to the point $x_\La\in \calB(G, F)\simeq \calB(\SO(V), F)$ defined by a vertex lattice $\La\subset V$. Then, the stabilizer of $x$ is 
  the subgroup 
  \[
  G(F)_x=\{g\in \GSpin(V)(F)\ |\  g\Lambda g^{-1}=\Lambda,\   \eta(g)\in \O^\times\}.
  \]
 By (\ref{kot}), $G(\breve F)_x\subset {\rm ker}(\kappa)$. Hence, by \cite[Appendix by Haines-Rapoport]{PR}, the corresponding Bruhat-Tits group scheme $\calG_x$  is connected, i.e. $\calG_x=\calG^\circ_x$, and it is the corresponding parahoric group scheme over $\O$
  for $\GSpin(V)$ over $F$. By \cite[Prop. 1.1.4]{KP}, we have an exact sequence
  \begin{equation}\label{ces3}
  1\to \Gm\to \calG_x \to \calG_\Lambda^\circ\to 1
  \end{equation}
  of group schemes over $\Spec(\O)$ which extends (\ref{ces1}).

  \begin{Remark}
  {\rm Assume that we are in one of the cases (1)-(4) listed in \S \ref{ss42a}, in particular we assume $F=F'$ in the notations of \S \ref{ss42a}.  Using the identification $
 \calB(\GSpin(V), F)\xrightarrow{\sim} \calB(\SO(V), F)$ and
 the description of the building for $\calB(\SO(V), F)$ given in \cite{BTIV}, we see that each {maximal} compact subgroup of $\GSpin(V)$
  is of the form $G(F)_x$ for $x=x_\La$ given, as above, by some vertex lattice $\La$ with $\delta_*(\La)\neq 2$.
  As we mentioned above, when $\delta_*(\La)=0$, $G(F)_x$ is hyperspecial. When $\delta_*(\La)=1$,
$G(F)_x$ is special. 
  The vertex lattices $\La$ with
  $\delta_*(\La)=2$, i.e. $\delta(\La)=2$ or $\delta(\La)=d-2$, give parahoric subgroups which are not maximal. Indeed, when $\delta(\La)=2$, there are exactly two self-dual
  lattices $\Lambda_0$, $\Lambda'_0$, that fit in an oriflamme configuration:
  \[\xy
	(-9,7)*+{\La};
	(-4.5,3.5)*+{\rotatebox{-45}{$\,\, \subset\,\,$}};
	(-4.5,10.5)*+{\rotatebox{45}{$\,\, \subset\,\,$}};
	(0,14)*+{\Lambda_0};
	(0,0)*+{\Lambda'_0};
	(4,3.5)*+{\rotatebox{45}{$\,\, \subset\,\,$}};
	(4,10.5)*+{\rotatebox{-45}{$\,\, \subset\,\,$}};
	(9,7)*+{ \La^\vee };
	\endxy    
	\]
Then $G(F)_x$ is the intersection of the two hyperspecial subgroups $\GSpin(\La_0)$, $\GSpin(\La'_0)$.
If $\delta(\La)=d-2$, then there are exactly two lattices   $\La_1$ and $\La_1'$ with $\La_1^\vee=\pi\La_1$, ${\La'}_1^\vee=\pi\La'_1$, 
that fit in a similar oriflamme configuration between $\La^\vee$ and $ \pi^{-1}\La$.
\quash{ such that
 \[\xy
	(-9,7)*+{\La^\vee};
	(-4.5,3.5)*+{\rotatebox{-45}{$\,\, \subset\,\,$}};
	(-4.5,10.5)*+{\rotatebox{45}{$\,\, \subset\,\,$}};
	(0,14)*+{\Lambda_1};
	(0,0)*+{\Lambda'_1};
	(4,3.5)*+{\rotatebox{45}{$\,\, \subset\,\,$}};
	(4,10.5)*+{\rotatebox{-45}{$\,\, \subset\,\,$}};
	(10,7)*+{\ \  \pi^{-1} \La. };
	\endxy    
	\]}
The group $G(F)_x$ is again the intersection of the two hyperspecial subgroups
which stabilize $\La_1$ and $\La_1'$.  
  }
  \end{Remark}

\section{Quadrics and linked quadrics}\label{sQua}

\subsection{Quadrics}\label{ss31a}
Suppose that $R$ is an $\O$-algebra and that $(W, \lan\ ,\ \ran)$ is a free rank $d$ 
$R$-module with a symmetric $R$-bilinear form 
\[
\lan\ ,\ \ran: W\times W\to R.
\]
We denote by $\cQ(W, \lan\ ,\ \ran)$, or simply by $\cQ(W)\subset \bbP(W)\simeq \bbP^{d-1}_R$, when the form is understood, the closed subscheme parametrizing isotropic $R$-lines $L$ (i.e. locally $R$-free
direct summands of $W$ of rank $1$) with $\lan L,L\ran=0$.

\subsection{Linked quadrics}\label{ss31}
Suppose now that $V$ and $\La\subset V$ are as in \S \ref{ss21}. We assume  
\[
\delta={\rm length}_\O(\La^\vee/\La)\leq d/2.
\]

Consider the $\O$-scheme 
$P(\La, \La^\vee)$ parametrizing linked isotropic lines 
\[
(L, L')\in \cQ(\Lambda)\times_\O \cQ(\Lambda^\vee)\subset \bbP(\Lambda)\times_\O \bbP(\Lambda^\vee)
\]
where the ``linked" condition for an $R$-valued point $(L, L')$ is that
\[
j_R(\pi L')\subset L,\quad i_R(L)\subset L',
\]
i.e. we have a commutative diagram
\[
\begin{matrix}
\pi\La^\vee_R &\xrightarrow{j_R} &\La_R&\xrightarrow{i_R} & \La^\vee_R\\
\cup && \cup && \cup \\
\pi L' &\xrightarrow{ \ }& L &\xrightarrow{ \ }& L'.
\end{matrix}
\]
(Of course, isotropic means that we require $\lan L, L\ran_R=0$, $\pi \lan L', L' \ran_R =0$.)
There is a natural isomorphism
\[
P(\La, \La^\vee)\otimes_\O F\cong \cQ(V),
\]
since, for an $F$-algebra $R$, we have $\La_R=\La^\vee_R=V_R$.

We  set $\cQ(\Lambda, \Lambda^\vee)$ for the (flat) reduced closure of the 
generic fiber $\cQ(V)$ in the $\O$-scheme $P(\La, \La^\vee)$.
By (\ref{groupScheme}), the group scheme $\calG$ acts on $P(\Lambda, \La^\vee)$ and also 
on the closure $\cQ(\Lambda,\Lambda^\vee)$. If $\delta=0$, then it is easy to see that 
$\cQ(\La,\La^\vee)=\cQ(\La)$ is a smooth quadric hypersurface in $\bbP^{d-1}_\O$.
Part (A) of the following theorem was suggested to the authors by B. Howard.

\begin{thm}\label{thmQ}   Assume $\delta>0$.

A) The $R$-valued points of the scheme $\cQ(\Lambda, \Lambda^\vee)$ are in bijection with the set of pairs $(L, L')$ of $R$-lines $L\subset  \Lambda_R$, $L'\subset \Lambda^\vee_R$, such that
$
j_R(\pi L')\subset L$, $ i_R(L)\subset L',
$ and
\[
\lan L, L'\ran_R=0,
\]
for   the perfect $R$-bilinear pairing $\lan\  ,\,  \ran_R: \Lambda_R\times \Lambda^\vee_R\to R$ induced by $\lan\  ,\,  \ran$.

B) The scheme $\cQ(\Lambda, \Lambda^\vee)$ is normal and is a relative local complete intersection, flat and projective over $\Spec(\O)$.
Its generic fiber is the smooth quadric $\cQ(V)$ in $\bbP(V)$. The scheme is regular if and only if $\delta= 1$.

C) The reduced special fiber of $\cQ(\Lambda, \Lambda^\vee)$ is the union of  $3$ reduced subschemes
$Z_0$, $Z_1$ and $Z_2$ of $\cQ(\Lambda, \Lambda^\vee)_k$, defined as follows:
\begin{itemize}
\item $Z_0$ is the locus of $(L, L')$ for which $i_R(L)=0$ and $j_R(\pi L')=0$.

\item $Z_1$ is the locus of $(L, L')$ for which $L'\subset   \Lambda^\vee_R=(\Lambda^\vee/\pi\Lambda^\vee)\otimes _kR$ lies in the $R$-submodule $(\Lambda /\pi\Lambda^\vee)_R\subset  \Lambda^\vee_R$, and is such that $\lan L'  ,\, L' \ran_{1, R}=0$.

\item $Z_2$ is the locus of $(L, L')$ for which $\pi^{-1}L\subset (\pi^{-1}\Lambda)_R=(\pi^{-1}\Lambda / \Lambda )\otimes _kR$ lies in the $R$-submodule $ ( \Lambda^\vee/  \Lambda)_R\subset (\pi^{-1}\Lambda)_R$, and is such that $\lan \pi^{-1} L ,\,  \pi^{-1}L \ran_{2, R}=0$.
\end{itemize}
These have the following properties:
\begin{itemize}
\item[1)] The subschemes $Z_1$, $Z_2$ are projective space bundles over the quadric hypersurfaces\\  $\cQ(\Lambda/\pi\Lambda^\vee,
\lan\, ,\, \ran_1)\subset \bbP(\Lambda/\pi\Lambda^\vee)$ and $\cQ(\Lambda^\vee/\Lambda, \lan\, ,\, \ran_2)\subset \bbP(\Lambda^\vee/\Lambda)$ respectively.

\item[2)] $Z_0\simeq \bbP(\Lambda^\vee/\Lambda)\times \bbP(\Lambda/\pi\Lambda^\vee)$ and
we have
\[
Z_0\cap Z_1\simeq \cQ(\Lambda/\pi\Lambda^\vee,
\lan\, ,\, \ran_1)\times \bbP(\Lambda^\vee/\Lambda),
\]
\[
 Z_0\cap Z_2\simeq \bbP(\Lambda/\pi\Lambda^\vee)\times \cQ(\Lambda^\vee/\Lambda, \lan\, ,\, \ran_2),
\]
for the scheme theoretic intersections. 

\item[3)] $div(\pi)=2\cdot (Z_0)+(Z_1)+(Z_2)$ as  Weil divisors on $\cQ(\La, \La^\vee)$. 
 
\end{itemize}

If  $\delta>2$, then $Z_0$, $Z_1$ and $Z_2$ are smooth and irreducible.  If $\delta=2$, then $Z_0$, $Z_1$ are smooth and irreducible
but $Z_2$ is either irreducible or the disjoint union of two smooth irreducible components. Finally, if $\delta=1$, then $\cQ(\Lambda^\vee/\Lambda, \lan\, ,\, \ran_2)$ and $Z_2$ are empty
and $Z_0$, $Z_1$ are smooth and irreducible.  
\end{thm}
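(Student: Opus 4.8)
The plan is to work throughout after the faithfully flat unramified base change $\O\to\O_{F'}$ of \S\ref{ss42a}, which writes $\La=M\oplus N$, $\La^\vee=M\oplus\pi^{-1}N$ with $\lan M,N\ran=0$, the form $\lan\ ,\ \ran$ perfect on $M$ and $\pi^{-1}\lan\ ,\ \ran$ perfect on $N$; put $m=\dim M=d-\delta\ge\delta=\dim N=n$. Every assertion of the theorem either descends along $\O\to\O_{F'}$ or can be tested on the geometric special fibre, so this reduction is harmless. Projectivity is immediate, $\cQ(\La,\La^\vee)$ being closed in $\bbP(\La)\times_\O\bbP(\La^\vee)$, and $\O$-flatness is built into the definition: the reduced closure of the reduced generic fibre $\cQ(V)$ is $\O$-torsion free. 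For (A), let $Y\subset\bbP(\La)\times_\O\bbP(\La^\vee)$ be the closed subscheme of pairs $(L,L')$ with $j_R(\pi L')\subset L$, $i_R(L)\subset L'$ and $\lan L,L'\ran_R=0$. Pairing the first inclusion against $L$ and the second against $L'$ shows these conditions already force $L,L'$ isotropic, so $Y\subseteq P(\La,\La^\vee)$; and on the generic fibre $L=L'$ makes $\lan L,L'\ran_R=0$ automatic, so $Y\otimes_\O F=\cQ(V)$ and hence $\cQ(\La,\La^\vee)=\overline{\cQ(V)}\subseteq Y$. For the reverse inclusion it suffices to prove $Y$ is $\O$-flat, for then $Y$ equals the scheme-theoretic closure of $Y\otimes_\O F$, which is exactly $\cQ(\La,\La^\vee)$. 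Writing $L=\langle(u,v)\rangle$, $L'=\langle(u',v')\rangle$ in the $M\oplus N$ and $M\oplus\pi^{-1}N$ coordinates, one has $i_R(u,v)=(u,\pi v)$ and $j_R(\pi(u',v'))=(\pi u',v')$, so on each standard affine chart of $\bbP(\La)\times\bbP(\La^\vee)$ the two linking conditions are the vanishing of all $2\times2$ minors of the matrices with rows $(u,\pi v),(u',v')$ and $(\pi u',v'),(u,v)$; normalising the pivots to $1$ and solving linearly produces explicit equations on which $\pi$ is visibly a non-zero-divisor. This proves (A) and records the charts used below: in a chart where $L$ and $L'$ both have their pivot in $M$, $\cQ(\La,\La^\vee)$ is the hypersurface $\{u^{\rm t}Au+\pi\,v^{\rm t}Bv=0\}$ in $\bbA^{d-1}_\O$ (with $A,B$ the unimodular Gram matrices of $M$, $\pi^{-1}N$), while in a chart where $L$ has its pivot in $N$ and $L'$ in $M$ one finds, after eliminating the pivots, the complete intersection
\[
\Spec\,\O[u_1,v'_1,u'_2,\dots,u'_m,v_2,\dots,v_n]\big/\bigl(u_1v'_1-\pi,\ u_1\,(u')^{\rm t}Au'+v'_1\,v^{\rm t}Bv\bigr),
\]
with $u'=(1,u'_2,\dots,u'_m)$, $v=(1,v_2,\dots,v_n)$, which is where the singularities live.

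For (B) we read the local structure off these charts. Each is cut out in an affine space $\bbA^N_\O$ by a determinantal ($\wedge^2$-type) ideal together with one quadric, and since the generic fibre has pure dimension $d-2$ the number of equations equals $N-(d-1)$; hence $\cQ(\La,\La^\vee)$ is a relative complete intersection over $\O$ of pure relative dimension $d-2$, in particular Cohen--Macaulay, so Serre's condition $S_2$ holds. For $R_1$ we examine the codimension-one points: the generic point of $\cQ(V)$ is smooth; at the generic points of $Z_1$ and $Z_2$ the special fibre is reduced and smooth over $k$, so $\cQ(\La,\La^\vee)$ is smooth over $\O$ there; and at the generic point of $Z_0$ the displayed chart shows (locally $(u')^{\rm t}Au'$ and $v^{\rm t}Bv$ become units) that the local ring is a discrete valuation ring in which $\pi$ is a unit times a square. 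Thus $\cQ(\La,\La^\vee)$ is normal. If $\delta=1$ then $N$ carries no isotropic line, the $\wedge^2$-conditions are vacuous, and every chart is a quadric hypersurface of the type treated in \cite{MP}, \cite{HPR}: in the displayed chart $v^{\rm t}Bv$ is a unit, one relation can be solved, and one is left with the regular hypersurface $\O\lps\dots\rps/\bigl((v'_1)^2-(\text{unit})\cdot\pi\bigr)$. If $\delta\ge2$ there is a pair of isotropic lines $L\subset N$, $L'\subset M$ (since $\min(\delta,d-\delta)\ge2$), giving a point of $Z_0\cap Z_1\cap Z_2$ at which the completed local ring is $\O\lps u_1,v'_1,\dots\rps/(u_1v'_1-\pi,\ u_1f+v'_1g)$ with $f,g\in\frakm$; its Zariski tangent space has dimension $>d-1$, so $\cQ(\La,\La^\vee)$ is not regular there. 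This establishes the dichotomy in (B).

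For (C), reduce the linking conditions modulo $\pi$. On $\cQ(\La,\La^\vee)_k$ the composite of $i_R$ with the linking map $L'\mapsto j_R(\pi L')$ vanishes in either order (it is multiplication by $\pi$, cf. $i\cdot j=\pi$), so one cannot have both $i_R(L)\ne0$ and $j_R(\pi L')\ne0$; the three cases ``both vanish'', ``$i_R(L)\ne0$'' (whence $L'=i_R(L)\subset(\La/\pi\La^\vee)_R$ and $\lan L',L'\ran_{1,R}=0$), ``$j_R(\pi L')\ne0$'' (the symmetric statement with $\pi^{-1}L\subset(\La^\vee/\La)_R$) are exactly $Z_0$, $Z_1$, $Z_2$, and by the chart equations their union is all of $\cQ(\La,\La^\vee)_k$, each being smooth over $k$ (hence reduced) by the bundle descriptions that follow. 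The projection $(L,L')\mapsto L'$ maps $Z_1$ onto the smooth quadric $\cQ(\La/\pi\La^\vee,\lan\ ,\ \ran_1)$ with fibre over $L'$ equal to $\bbP(i_R^{-1}(L'))$, a projective space of constant dimension by rank--nullity for $i_R\bmod\pi$ — the equation $\lan L,L'\ran_R=0$ being automatic on such a fibre since $N_R\subset\La_R$ pairs trivially with $(\La/\pi\La^\vee)_R\subset\La^\vee_R$; likewise for $Z_2$, giving (1). On $Z_0$ the conditions force $L\subset N_R$ and $L'\subset M_R$ with nothing further, so $Z_0\simeq\bbP(N_k)\times\bbP(M_k)=\bbP(\La^\vee/\La)\times\bbP(\La/\pi\La^\vee)$, and imposing in addition the $Z_1$-, resp. $Z_2$-condition gives the scheme-theoretic intersections in (2) (and $Z_0\cap Z_1\cap Z_2\simeq\cQ(\La/\pi\La^\vee,\lan\ ,\ \ran_1)\times\cQ(\La^\vee/\La,\lan\ ,\ \ran_2)$). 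A chart computation about a generic point of $Z_0$ shows $\pi$ has multiplicity $2$ there, while it has multiplicity $1$ along $Z_1$ and $Z_2$; this is (3). Finally the irreducibility claims follow from the structure of smooth quadric hypersurfaces over $\bar k$: such a quadric in $\bbP^r$ is irreducible for $r\ge2$, two points for $r=1$, and empty for $r=0$. Applied to $\cQ(\La/\pi\La^\vee)\subset\bbP^{d-\delta-1}$ and $\cQ(\La^\vee/\La)\subset\bbP^{\delta-1}$, together with $d-\delta\ge\delta$, this gives: $Z_0,Z_1,Z_2$ irreducible when $\delta>2$; $Z_0,Z_1$ irreducible and $Z_2$ either irreducible or a disjoint union of two smooth components (according as $\lan\ ,\ \ran_2$ is anisotropic over $k$ or not) when $\delta=2$; and $\cQ(\La^\vee/\La),Z_2$ empty, $Z_0,Z_1$ irreducible, when $\delta=1$.

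The main obstacle is the explicit chart bookkeeping: organising the standard charts of $\bbP(\La)\times\bbP(\La^\vee)$ (the $\calG$-action helps reduce the number of cases), correctly collapsing the two determinantal linking conditions into a workable system after normalising pivots, and extracting from the resulting equations the complete-intersection presentation, the fact that $\pi$ is a non-zero-divisor, its precise multiplicity $2$ along $Z_0$, and the embedding-dimension jump at a point of $Z_0\cap Z_1\cap Z_2$ when $\delta\ge2$. Once these local equations are in hand, normality, the regular-locus dichotomy and the full description of the special fibre follow essentially formally.
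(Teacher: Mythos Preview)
Your approach is essentially the one the paper takes: reduce via the splitting $\La=M\oplus N$ and compute in explicit affine charts, arriving at the key presentation
\[
\Spec\,\O[u_1,v'_1,u'_2,\dots,u'_m,v_2,\dots,v_n]\big/\bigl(u_1v'_1-\pi,\ u_1\,(u')^{\rm t}Au'+v'_1\,v^{\rm t}Bv\bigr),
\]
which is exactly the paper's chart $\calV_{ij}$ (their $u,v,Q_1,Q_2$ are your $u_1,v'_1,(u')^{\rm t}Au',v^{\rm t}Bv$). From here the normality, the lci property, the regular/non-regular dichotomy, and the decomposition of the special fibre are read off in the same way.

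Two organizational differences are worth noting. First, instead of running through the standard charts of $\bbP(\La)\times\bbP(\La^\vee)$ and eliminating the linking minors by hand, the paper parametrizes $(L,L')$ from the outset by $(m',n,u,v)$ with $m=um'$, $n'=vn$, $uv=\pi$, and realizes $\cQ(\La,\La^\vee)$ as the $(\Gm\times\Gm)$-quotient of a single affine hypersurface $\calY\subset\calX^0$; this makes flatness and the description of $Z_0,Z_1,Z_2$ uniform and avoids the case analysis you flag at the end. Second, the paper factors the chart through a smooth morphism to the ``basic'' surface $\calB=\Spec\,\O[u,v,S,T]/(uv-\pi,\ uS+vT)$ and checks everything (lci, normality, multiplicities, irreducible components) on $\calB$; this replaces your direct Jacobian and tangent-space computations.

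One point in your write-up needs care: in the (B) paragraph you argue ``each chart is cut out by a determinantal $\wedge^2$-ideal together with one quadric, and since the generic fibre has dimension $d-2$ the number of equations equals $N-(d-1)$, hence relative lci.'' Determinantal ideals are not complete intersections, and matching dimension does not by itself bound the number of generators. What actually happens---and what your displayed chart shows---is that after normalizing pivots the $\wedge^2$-conditions \emph{solve linearly} for the redundant coordinates, leaving exactly the two equations $u_1v'_1=\pi$ and $u_1Q_1+v'_1Q_2=0$ in $d$ free variables; it is this reduced presentation, not the raw minor count, that exhibits the complete intersection. With that correction your argument goes through.
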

\begin{proof}
Write
\[
\Lambda=M\oplus N,\quad \Lambda^\vee=M\oplus \pi^{-1}N
\]
with $M$, $N$ free $\O$-submodules of $\La$ such that $\lan M, N\ran=0$ and with the form $\lan\ ,\ \ran$ perfect on $M$ and
$\pi^{-1}\lan\ ,\ \ran$  perfect on $N$. We have ${\rm rank}_\O(N)=\delta$, ${\rm rank}_\O(M)=d-\delta$. 
Note that
\[
\bar M:=M\otimes_\O k=\La/\pi\La^\vee, \quad \bar N:=N\otimes_\O k\simeq \La^\vee/\La.
\]
Set 
\[
\lan\ ,\ \ran_1 : M\times M\to \O,\quad \lan m, m' \ran_1=\lan m, m'\ran.
\]
\[
\lan\ ,\ \ran_2 : N\times N\to \O,\quad \lan n, n' \ran_2=\pi^{-1}\lan n, n'\ran.
\]
Now write
\[
L=(f)=(m+n),\quad L'=(f')=(m'+\pi^{-1}n')
\]
with $m$, $m'\in M_R$ and $n$, $n'\in N_R$.  
The linking conditions are 
\[
m+n=u(m'+\pi^{-1}n'),\quad \pi m'+n'=\pi(m'+\pi^{-1}n')=v(m+n),
\]
for some $u$, $v\in R$. These give $m=u m'$, $n'=vn$, $uv=\pi$.

The isotropic conditions are
\[
\lan f, f\ran=\lan m, m\ran_1+\pi\lan n , n\ran_2=0,
\quad
\pi\lan f', f'\ran =\pi\lan m', m'\ran_1+\lan n', n'\ran_2=0.
\]
These translate to, respectively, 
\[
 u^2\lan m', m'\ran_1+uv\lan n , n\ran_2=0,
\quad
 uv\lan m', m'\ran_1+v^2\lan n, n\ran_2=0.
\]
In the flat closure, they amount to a single equation:
\begin{equation}\label{singleEq}
u\lan m', m'\ran_1+v\lan n , n\ran_2=0.
\end{equation}
By the above, we have $L=(um'+n)$, $L'=(m'+\pi^{-1}vn)$, and so (\ref{singleEq}) amounts to 
\[
\lan L, L'\ran_R=0.
\]

Denote by $\cQ^\sharp(\Lambda, \Lambda^\vee)$ the closed subscheme of $\bbP(\Lambda)\times_\O \bbP(\Lambda')$ given by pairs $(L, L')$ such that
$j_R(\pi L')\subset L$, $ i_R(L)\subset L',
$ and $\lan L, L'\ran_R=0$. By the above discussion, the flat closure $\cQ(\Lambda, \Lambda^\vee)$ is 
a closed subscheme of $\cQ^\sharp(\Lambda, \Lambda^\vee)$ with the same generic fiber. 
To show (A), it is enough to show that $\cQ^\sharp(\Lambda, \Lambda^\vee)$ is flat over $\O$.

Consider the affine scheme over $\Spec(\O)$ given as the product
\[
\calX:= \bbA(M)\times_\O \bbA(N)\times_\O \Spec(\O[u, v]/(uv-\pi))=\Spec(\O[\un  x, \un y, u, v]/(uv-\pi)).
\]
(Here, we set $\bbA(M)=\Spec({\rm Sym}^\bullet_\O(M^\vee))$ and similarly for $\bbA(N)$. We identify $\bbA(M)$, $\bbA(N)$ with $\Spec(\O[\un x])$, $\Spec(\O[\un y])$, after picking a basis of $M$, $N$ respectively.)
Let $\calX^0=\calX-(T_1\cup T_2\cup T_0)$ be the open subscheme of $\calX$ which is the complement
of the union of the three subschemes $T_1$, $T_2$, $T_0$, defined respectively by the ideals
\[
(\un x, v),\quad (\un y, u), \quad (\un x, \un y).
\]
The scheme $\calX^0$ supports an action of the torus $\bbG_m\times\bbG_m$ given by
 \[
 (\lambda, \mu)\cdot (\un x, \un y, u, v)=(\lambda \un x, \mu \un y, \lambda^{-1}\mu u, \lambda\mu^{-1} v).
 \]
 The subscheme $\calY$ of $\calX^0$ defined by the equation
 \begin{equation}\label{Qeq}
 u\cdot \lan \un x, \un x\ran_1+v\cdot \lan \un y, \un y\ran_2=0, 
 \end{equation}
 is stable under the torus action. The quotient
 \[
\calY/(\bbG_m\times\bbG_m)
 \]
 is isomorphic to $\cQ^\sharp(\Lambda, \Lambda^\vee)$ and is flat over $\O$. This shows 
$\cQ^\sharp(\Lambda, \Lambda^\vee)=\cQ(\Lambda, \Lambda^\vee)$ which is part (A).

Let us now discuss the subschemes $Z_0$, $Z_1$, $Z_2$, of the special fiber $\cQ^\sharp(\Lambda, \Lambda^\vee)_k=\cQ(\Lambda, \Lambda^\vee)_k$. Suppose that $R$ is a $k$-algebra and assume
 that $(L, L')$ is a pair of $R$-lines $L\subset \La_R$, $L'\subset \La^\vee_R$,
 which are linked, i.e. satisfy
 $i_R(L)\subset L'$ and $j_R(\pi L')\subset L$.
 Then $L'\subset   (\Lambda /\pi\Lambda^\vee)_R$ if and only if $j_R(\pi L')=(0)$. Similarly, $\pi^{-1}L\subset   (\Lambda^\vee / \Lambda )_R$ if and only if $i_R(L)=(0)$. 
We first observe that any pair of $R$-lines $(L, L')$ with $L'\subset   (\Lambda /\pi\Lambda^\vee)_R$, $\pi^{-1}L\subset   (\Lambda^\vee / \Lambda )_R$, satisfies $i_R(L)=(0)$, $j_R(\pi L')=(0)$ and so is linked, but also has $\lan L   , L'  \ran_{R}=0$ and hence it gives an $R$-point of $Z_0$. 
Suppose we are given an $R$-line $L'\subset   (\Lambda /\pi\Lambda^\vee)_R$ which is isotropic for the form $\lan\    ,\,   \ran_{1, R}$. Then every $R$-line $L\subset \Lambda_R$ with $i_R(L)= L'$ gives an $R$-point $(L, L')$ of $Z_1$. There is a similar construction for $R$-points of $Z_2$. Finally, observe that if $R$ is a $k$-algebra which is an integral domain, since $i_R\circ j_R=0$, at least one of $i_R(L)$, $j_R(\pi L')$, has to be $(0)$. If $i_R(L)=L'$, then $j_R(\pi L')=(0)$ and then $\lan L   , L'  \ran_{R}=0$ is equivalent to $\lan L'   , L'  \ran_{1, R}=0$. Similarly, if $j_R(\pi L')=L$, then $i_R(L)=(0)$ and $\lan L   , L'  \ran_{R}=0$ is equivalent to $\lan \pi^{-1}L   , \pi^{-1}L  \ran_{2, R}=0$. These considerations easily imply that the reduced special fiber of $\cQ(\Lambda, \Lambda^\vee)_k$ is the union $Z_0\cup Z_1\cup Z_2$. We can also deduce that the schemes  $Z_i$ have the descriptions given in (C1) and (C2). This will also be explained below.

To obtain more precise (scheme) theoretic information and show the remaining statements we will use the quotient description above. In this description, the subschemes $Z_2$, $Z_1$, $Z_0$  of $\cQ(\Lambda, \Lambda^\vee)$ are given by the quotient of the closures of $u=0$, $v\neq 0$, $\lan \un y, \un y\ran_2=0$, of $v=0$, $u\neq 0$, $\lan \un x, \un x\ran_1=0$, and of $u=v=0$, respectively:
 
 \begin{itemize}
 \item $Z_2$ is given by the quotient of the subscheme $\lan \un y, \un y\ran_2=0$ in the complement
 \[
 [\bbA(\bar M)\times_k (\bbA(\bar N)-\{0\})\times_k \bbA^1]-(0\times (\bbA(\bar N)-\{0\})\times 0)
 \]
 by $\bbG_m\times \bbG_m$. Here, $(\lambda, \mu)$ acts by  
 \[
 (\lambda, \mu)\cdot (\un x,  \un y, v )=(\lambda \un x,   \mu \un y, \lambda\mu^{-1} v).
 \]
 This is a projective space bundle over the $\bbG_m$-quotient of $\lan \un y, \un y\ran_2=0$ in $\bbA(\bar N)-\{0\}$;
 this last quotient is the projective quadric hypersurface $\cQ(\Lambda^\vee/\La, \lan , \ran_2)$. Hence, $Z_2$ is a projective space bundle over $\cQ(\Lambda^\vee/\La, \lan , \ran_2)$.
 
 \item Similarly, $Z_1$ is a projective space bundle over the projective quadric hypersurface\\ $\cQ(\La/\pi\La^\vee, \lan\, , \ran_1)$.
 
 \item $Z_0$  is the quotient 
\[
(\bbA(\bar M)-\{0\})\times_k (\bbA(\bar N)-\{0\})/(\bbG_m\times \bbG_m),
\] 
so it is a product of projective spaces $\bbP(\Lambda^\vee/\Lambda)\times \bbP(\Lambda/\pi\Lambda^\vee)$.
 \end{itemize}
  
 It  follows easily from the above that (C1) and (C2) hold.
 We now show the remaining statements, in particular that $\cQ(\La,\La^\vee)$ is a normal relative local complete intersection as claimed in (B), and that the identity of Weil divisors in (C3) is true. 

Consider a $\bar k$-point of $\cQ(\La,\La^\vee)$ obtained from a $\bar k$-valued point of $\calY$ for which $\und x=0$. Since $\calY\subset \calX^0$, we necessarily have $v\neq 0$
and hence $u=0$. Then we also have $\un y\neq 0$ and the point lies in $Z_2$ but not on $Z_0$. The equation (\ref{Qeq}) amounts to just $\lan \un y, \un y\ran_2=0$.
Since we are considering a non-zero point $\un y$, the scheme $\cQ(\La, \La^\vee)_k$ is smooth there.
The same argument works for a point for which $\und y=0$; this lies on $Z_1$ but not on $Z_0$.

It remains to deal with points for which $\un x\neq 0$ {\sl and} $\un y\neq 0$. Consider the corresponding open subscheme 
\[
\cQ^0(\La, \La^\vee)=[(\bbA(M)-\{0\})\times_\O (\bbA(N)-\{0\})\times \Spec(\O[u, v]/(uv-\pi))]/(\Gm\times\Gm)
\]
of $\cQ(\La, \La^\vee)$. We will obtain an explicit description of $\cQ^0(\La,\La^\vee)$:

Recall, we can choose bases of $M$ and $N$ that give coordinates  $\un x=(x_1,\ldots , x_{d-\delta})$ on $\bbA(M)$ and 
$\un y=(y_1, \ldots , y_\delta)$ on $\bbA(N)$. Denote by $Q_1(x_1, \ldots , x_{d-\delta})$,
$Q_2(y_1, \ldots , y_\delta)$ the  quadratic forms given as $\lan \un x, \un x\ran_1/2$
and $\lan \un y, \un y\ran_2/2$.

Then $\cQ^0(\Lambda, \Lambda^\vee)$ is covered
by the open affines $\calV_{i,j}$ with equations
\[
uv=\pi,\quad uQ_1(x_1, \ldots , x_{d-\delta})+vQ_2(y_1, \ldots , y_\delta)=0,\quad x_i=1,\quad y_j=1.
\]
For  simplicity, we will just consider the case $i=1$, $j=1$, and set $\calV=\calV_{11}$. 

We first consider the case $\delta=1$. Then $\calV$ is given by
\[
uv=\pi,\quad uQ_1(1, x_2, \ldots , x_{d-\delta})+v=0,
\]
i.e.
\[
-u^2 Q_1(1, x_2, \ldots , x_{d-\delta})=\pi.
\]
In this case, $\calV$ and hence $\cQ(\La,\La^\vee)$ is regular and its special fiber is a divisor with normal crossings. 

Assume now that $\delta\geq 2$. As also later, it helps to consider the simpler ``basic" scheme
\[
\calB=\Spec(\O[u, v, S, T]/(uv-\pi, uS+vT)
\]
There is a morphism $f: \calV\to\calB$ given by $S\mapsto Q_1$, $T\mapsto Q_2$.
We can see that $f$ is smooth. (Recall that $Q_1$ and $Q_2$ are  non-degenerate.) We will now check the desired properties for $\calB$: The scheme $\calB$ has relative dimension $2$ over $\Spec(\O)$ and is a relative complete intersection. The radical of $(uS+vT, uv)$ is $(uS, vT, uv)$ and so the reduced special fiber of $\calB$ has $3$ smooth irreducible components given by the prime ideals $(u, v)$,
$(S, v)$, $(T, u)$.  The component given by $(u, v)$ is not reduced in the special fiber;
the corresponding primary ideal is $(u^2, uv, v^2, uS+vT)$. The other components are reduced. 
We can see that $\calB$ is regular in codimension $1$ and it easily follows by Serre's criterion 
that $\calB$ is normal.

It now follows that we have a similar picture
for the special fiber of $\calV$ and that $\calV$ is a normal relative complete intersection of relative dimension $d-2$.
The result for $\cQ^0(\La,\La^\vee)$ and then also $\cQ(\La, \La^\vee)$ follows.
\end{proof}
\quash{Note that the inverse image of $(T,  u)$ in $\calV$
is 
\[
\Spec(k[v, x_2,\ldots , x_{d-\delta}, y_2,\ldots, y_{\delta}, ]/Q_2(1, y_2, \ldots , y_\delta)).
\]
which is a $\bbA^{d-\delta}_k$-bundle over the open  affine patch $Q_2(1, y_2, \ldots , y_\delta)=0$ of $Q_2=0$.
}

\begin{Remark}\label{delta1} {\rm Note that in the proof above, the case $\delta=1$ is special. 
\quash{Indeed, then $\calV$ is given by
\[
uv=\pi,\quad uQ_1(1, x_2, \ldots , x_{d-\delta})+v=0,
\]
i.e.
\[
-u^2 Q_1(1, x_2, \ldots , x_{d-\delta})=\pi.
\]
}
In this case, $\calV$ and hence $\cQ(\La,\La^\vee)$ is regular and its special fiber is a divisor with normal crossings. 
This result appears in \cite[12.7.2]{HPR}: There, $\cQ(\La,\La^\vee)$ is denoted as $P(\La)^{\rm fl}$ and is identified
with the blow-up of the singular quadric $\cQ(\La)$ of isotropic lines in $\La_R$ at the unique singular point of its special fiber $\cQ(\La)_k$
(see \emph{loc. cit.} Lemma 12.8). }
\end{Remark}

\subsection{A Blow-up} Now let us consider the blow-up 
\[
\beta_i: \wti \cQ^{(i)}(\La, \La^\vee)\to \cQ(\Lambda, \Lambda^\vee)
\]
 along the (reduced) subscheme $Z_i$, $i=1$, $2$.
 We first observe that when $\delta=1$, by Remark \ref{delta1}, $Z_1$ is locally principal and 
 so $\wti \cQ^{(1)}(\La, \La^\vee)= \cQ(\Lambda, \Lambda^\vee)$. 
 
 \begin{prop}\label{prop331}  
 
 a) There is a canonical isomorphism $\wti \cQ^{(1)}(\La, \La^\vee)\cong \wti \cQ^{(2)}(\La, \La^\vee)$. By abusing notation, we will denote both these schemes by $\wti \cQ(\La, \La^\vee)$.
 
 b) The scheme $\wti \cQ(\La, \La^\vee)$ is regular. Its special fiber is a divisor with (non-reduced) normal crossings and the multiplicity of each component is one or two. In fact, $\wti \cQ(\La, \La^\vee)$ is covered by open affine subschemes which are smooth over $
\Spec(\O[u, x, y]/(u^2xy-p))
$
when $\delta\geq 2$, or over
$
\Spec(\O[u, x]/(u^2x-p))
$
when $\delta= 1$. 

c) The reduced special fiber of $\wti\cQ(\La,\La^\vee)$ is a union of smooth irreducible 
 components. It has three irreducible components if $\delta>2$, four or three if $\delta=2$, and two if $\delta=1$.
 \end{prop}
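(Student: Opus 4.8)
For Proposition \ref{prop331}, the plan is to reduce, via the local presentation of $\cQ(\La,\La^\vee)$ used in the proof of Theorem \ref{thmQ}, to an explicit blow-up computation on the ``basic scheme'' $\calB=\Spec(\O[u,v,S,T]/(uv-\pi,\,uS+vT))$, and then transport the conclusions back. First I would observe that away from the open locus $\cQ^0(\La,\La^\vee)$ of that proof (i.e. near $\und x=0$ or $\und y=0$) the scheme $\cQ(\La,\La^\vee)$ is smooth over $\O$, its special fiber being a smooth divisor which locally is the single relevant component $Z_i$; so that $Z_i$ is locally principal there and each $\beta_i$ is an isomorphism in a neighbourhood. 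Over $\cQ^0(\La,\La^\vee)$ one has the affine charts $\calV_{ij}$, smooth over $\calB$ via the map $f$ sending $S,T$ to $Q_1,Q_2$, and since $f$ is smooth with $Z_1\cap\calV_{ij}=f^{-1}(V(v,S))$, $Z_2\cap\calV_{ij}=f^{-1}(V(u,T))$ reduced, forming a blow-up commutes with $f$. So (a), (b), (c) reduce to the corresponding statements for the blow-ups of $\calB$ along $\bar Z_1=V(v,S)$ and $\bar Z_2=V(u,T)$ (when $\delta=1$, $N$ has rank one, $T$ and $Q_2$ drop out, $Z_2=\varnothing$, and $Z_1$ is locally principal, which is the already-regular situation of Remark \ref{delta1}).

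For (a) I would compute $\mathrm{Bl}_{\bar Z_1}\calB=\Proj\bigl(\bigoplus_n(v,S)^n\bigr)$ in its two standard charts: in $D_+(S)$, where $v=Sw$ and $u=-wT$, one gets $\Spec(\O[S,w,T]/(w^2ST+\pi))$, and in $D_+(v)$, where $S=vw$ and $T=-uw$, one gets $\Spec(\O[u,v,w]/(uv-\pi))$. On the first chart the ideal of $\bar Z_2$ pulls back to $(u,T)=(T)$, on the second to $(u,T)=(u)$; in both cases it becomes invertible, so by the universal property of blow-ups there is a canonical $\calB$-morphism $\mathrm{Bl}_{\bar Z_1}\calB\to\mathrm{Bl}_{\bar Z_2}\calB$, the involution of $\calB$ exchanging $(u,S)$ with $(v,T)$ supplies the inverse, and uniqueness makes them mutually inverse. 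Transporting this isomorphism along $f$ and gluing over the quotient presentation yields the canonical (hence $\calG$-equivariant) isomorphism $\wti\cQ^{(1)}(\La,\La^\vee)\cong\wti\cQ^{(2)}(\La,\La^\vee)$; for $\delta=1$ both sides are $\cQ(\La,\La^\vee)$.

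For (b): both standard charts of $\mathrm{Bl}_{\bar Z_1}\calB$ above are regular, with reduced special fiber a normal crossings divisor of multiplicities one and two (visible in $\{w^2ST=0\}$), but only one of them has the shape $\Spec(\O[u,x,y]/(u^2xy-\pi))$; the other, $\Spec(\O[u,v,w]/(uv-\pi))$, does not. The strategy is to use the isomorphism of (a): passing freely between the presentations $\mathrm{Bl}_{\bar Z_1}\calB=\mathrm{Bl}_{\bar Z_2}\calB$ and taking their common refinement, one aims to produce an affine cover of the blow-up each member of which is smooth over $\Spec(\O[u,x,y]/(u^2xy-\pi))$ — the good charts being isomorphic to it and the overlaps being étale over it (for instance $\Spec(\O[S,T,w^{\pm1}]/(w^2ST+\pi))$ is étale over it via $u\mapsto w$, $x\mapsto S$, $y\mapsto-T$, up to sign). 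Verifying that such a cover can be arranged, and then pulling it back along the smooth morphism $f$ and the $\calG$-torsor presentation, is the main technical content of (b); the case $\delta=1$ is simpler, the normal form of \S\ref{ss42a} together with Remark \ref{delta1} directly giving affines smooth over $\Spec(\O[u,x]/(u^2x-\pi))$. Regularity, the normal crossings property, and the multiplicity bound then descend along the smooth morphisms involved.

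Finally, for (c): since $Z_1$ is blown up as a reduced subscheme its strict transform is empty and is replaced by the exceptional divisor $E$, which is irreducible ($Z_1$ being Cartier outside a small locus, $E$ is birational to $Z_1$). Tracking the strict transforms $\wti Z_0$, $\wti Z_2$ of $Z_0$, $Z_2$ and $E$ through the charts — e.g. in $\Spec(\O[S,w,T]/(w^2ST+\pi))$ one reads off $\wti Z_0=V(w)$ (multiplicity two), $E=V(S)$ and $\wti Z_2=V(T)$ (multiplicity one) — shows the reduced special fiber of $\wti\cQ(\La,\La^\vee)$ is $\wti Z_0\cup E\cup\wti Z_2$ with each piece smooth. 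Counting via Theorem \ref{thmQ}: three components when $\delta>2$ (all $Z_i$ irreducible), three or four when $\delta=2$ (according as $\wti Z_2$, like $Z_2$, is irreducible or splits into two smooth pieces), and two when $\delta=1$, where $\wti\cQ(\La,\La^\vee)=\cQ(\La,\La^\vee)$ has reduced special fiber $Z_0\cup Z_1$. The main obstacle I anticipate is exactly the bookkeeping of part (b): the naive two-chart cover of either blow-up contains a chart of the wrong shape, so the coincidence of the two blow-ups must be exploited to build a cover smooth over the stated model, and then the descent $\calB\rightsquigarrow\calV\rightsquigarrow\cQ^0(\La,\La^\vee)\rightsquigarrow\wti\cQ(\La,\La^\vee)$, together with the separate $\delta=1$ degeneration, must be carried through carefully.
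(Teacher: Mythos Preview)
Your plan follows the paper's: reduce via the smooth map $f:\calV\to\calB$ to the basic scheme $\calB$, compute the blow-up $\wti\calB$ in two affine charts, and transport back. Parts (a) and (c) are handled correctly and essentially match the paper.

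The gap is in (b), exactly where you flag it. Your proposed fix --- use the isomorphism of (a) and combine the good charts from the two blow-up presentations --- does not work, because those good charts are the \emph{same} open of $\wti\calB$. Concretely, the good chart $D_+(S)$ of the blow-up along $(S,v)$ (coordinates $S,T,w$ with $v=Sw$, $u=-wT$, $w^2ST+\pi=0$) and the good chart $D_+(T)$ of the blow-up along $(T,u)$ (coordinates $S,T,w'$ with $u=Tw'$, $v=-w'S$, $(w')^2ST+\pi=0$) are identified under $w'=-w$; the bad charts likewise coincide. So passing between presentations yields no new opens, and the chart $\Spec(\O[u,v,w]/(uv-\pi))$ is still not covered by opens of the required shape.

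What does work is to note that this semistable chart is Zariski-locally an open subscheme of the model itself. Localizing $\Spec(\O[a,b,c]/(a^2bc-\pi))$ where $a$ is invertible and substituting $b'=a^2b$ gives $\Gm\times\Spec(\O[b',c]/(b'c-\pi))$; for any $w_0\in\O$ the open $\{w\neq w_0\}$ of the semistable chart maps isomorphically onto this via $a\mapsto w-w_0$, $b\mapsto u(w-w_0)^{-2}$, $c\mapsto v$. Two such opens (for distinct $w_0$) cover the chart, and together with $D_+(S)$ give a cover of $\wti\calB$ by opens smooth over --- indeed open in --- the model. The paper is terse at this step as well, simply asserting that (b) follows from the chart computation, but it does not commit to the strategy that fails.
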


\begin{proof} By Remark \ref{delta1} and the above, these statements hold when  $\delta=1$. Assume $\delta\geq 2$.
Let us consider the blow-up $\wti\calB$ of 
\[
\calB=\Spec(\O[u, v, S, T]/(uv-\pi, uS+vT)
\]
 along $(S, v)$. It is a closed subscheme of
\[
\{Sy-vx=0\ |\ (S, T, u, v)\times (x;y)\}\subset \calB\times_\O \bbP^1_\O,
\]
where $\bbP^1_\O$ has 
projective coordinates $(x;y)$. This is covered by two open affine subschemes:

I) $y=1$. Then $S=vx$ and we obtain
$
uv=\pi$, $v(ux+T)=0
$. Hence, we can see that the corresponding open of the blow-up $\wti\calB$
is cut out by the equations $uv=\pi$, $T=-ux$, in the subscheme displayed above.
This open is isomorphic to 
\[
\Spec(\O[u,v,x]/(uv-\pi)).
\]

II) $x=1$. Then $v=Sy$ and we obtain:
$
uSy=\pi$, $uS+SyT=0
$.
Hence,  the corresponding open of the blow-up $\wti\calB$
is cut out by the equations $uSy=\pi$, $u+yT=0$, in the subscheme displayed above.
These give $STy^2=-\pi$ and
this open is isomorphic to 
\[
\Spec(\O[S,T,y]/(STy^2+\pi)).
\]
Notice that the ideal $(T, u)$ becomes principal on both of these charts. On (I) we have
$T=-ux$, so $(T, u)=(u)$. On (II) we have $u=-yT$, so $(T, u)=(T)$. Using the universal property of the blow-up and symmetry gives that
the blow-ups of $(S, v)$ and $(T, u)$ are isomorphic.

By the proof of Theorem \ref{thmQ}, we see that the open subscheme $\cQ^0(\La, \La^\vee)\subset \cQ(\La, \La^\vee)$
is covered by open affine subschemes $\calV=\calV_{ij}$ that supports a smooth morphism
\[
f: \calV\to \calB=\Spec(\O[u, v, S, T]/(uv-\pi, uS+vT);\quad S\mapsto Q_1,\ T\mapsto Q_2.
\]
Note that, by the same proof, $\cQ(\La,\La^\vee)$ is smooth at the points of $\cQ(\La,\La^\vee)-\cQ^0(\La, \La^\vee)$ and 
the divisors $Z_1$ and $Z_2$ are principal at these points. Therefore, the blow-ups $q_i:\wti \cQ^{(i)}(\La, \La^\vee)\to \cQ(\La, \La^\vee)$ are 
isomorphisms locally over $\cQ(\La,\La^\vee)-\cQ^0(\La, \La^\vee)$. To show our result is enough to consider the blow-ups
of $\calV$ at $Z_1\cap \calV$ and at $Z_2\cap \calV$.
Since $Z_1\cap \calV$ is given by the ideal $(Q_1, u)$, the blow-up $\wti\calV_1$ of $\calV$ along $Z_1\cap \calV$ 
is obtained as the fiber product 
\[
\wti\calV_1\simeq \calV\times_{\calB }\wti\calB.
\]
The morphism $\wti\calV_1\to\wti\calB$ is smooth and (b) and (c) now follows from our discussion above and Theorem \ref{thmQ} and its proof.
Part (a), i.e. $\wti \cQ^{(1)}(\La, \La^\vee)\simeq \wti \cQ^{(2)}(\La, \La^\vee)$, follows from $\wti\calV_1\simeq \wti\calV_2$ since, by the observation above, the 
blow-ups of $\calB$ at $(S, v)$ and $(T, u)$ are isomorphic. 
\end{proof}

\begin{Remark}
{\rm It would be useful to have a simple moduli-theoretic description of the blow-up $\wti \cQ(\La, \La^\vee)$
similar in spirit to the description of $\cQ(\Lambda,\Lambda^\vee)$ given by Theorem \ref{thmQ} (A).

}
\end{Remark}

\section{Local models and variants}

\subsection{Local models after \cite{PZ}}
We  briefly recall some of the constructions in \cite{PZ}.

 Let $G$ be a connected reductive group over $F$. Assume that $G$ splits over a tamely ramified extension of $F$. Let $ \{ \mu \}$ be the conjugacy class of a minuscule geometric cocharacter $ \mu : \mathbb{G}_{m \bar F} \rightarrow G_{\bar F}$. Let $K$ be a parahoric subgroup of $G(F)$, which is the connected stabilizer of some point $x$ in the (extended) Bruhat-Tits building $\mathcal{B}^e(G,F)$ of $G(F)$.  Define $E$ to be the extension of $F$ which is the field of definition of the conjugacy class $ \{ \mu \}$.
 The construction of the local model $\Mloc_K(G,\{\mu\})$ is done as follows:

First, give an affine group scheme $\und\calG$ which is smooth over $\Spec(\O_F[t])$ and which, among other properties, satisfies:

\begin{enumerate}
    \item The base change of $\und\calG$ by $\Spec(\O_F) \rightarrow \Spec(\O_F[t]) = \mathbb{A}^1_{\O_F}$ given by $t \rightarrow \pi$ is the Bruhat-Tits group scheme $\calG$ which corresponds to  $K$ (see \cite{BTII}).  
    
    \item The group scheme $\und\calG\otimes_{\O_F [t]} \O_F [t, t^{-1}]$ is reductive.
\end{enumerate}

Next, consider the global (``Beilinson-Drinfeld") affine Grassmannian  
\[
{\rm Aff}_{\und\calG} \rightarrow \mathbb{A}^1_{\O_F}
\]
 given by $\und\calG$, which is an ind-projective ind-scheme. The base change $t \rightarrow \pi$, gives an equivariant isomorphism 
$$
 \text{Aff}_G \xrightarrow{\sim} \text{Aff}_{\und\calG }\times_{\mathbb{A}^1_{\O_F}} \Spec(F)
 $$ 
where $\text{Aff}_G$ is the affine Grassmannian of $G$; this is the ind-projective ind-scheme over $\Spec(F)$ that represents the fpqc sheaf associated to 
$$
R \rightarrow G(R\llps t\lrps)/G(R\lps t\rps),
$$
where $R$ is an $F$-algebra (see also \cite{PR}). The cocharacter $\mu$  gives an $\bar F\llps t\lrps$-valued point $ \mu(t)$ of $G$. This gives a $\bar F$-point $ [\mu(t)]= \mu(t) G ( \bar F\lps t\rps)$ of $\text{Aff}_G $.
Since $\mu$ is minuscule and $ \{\mu \} $ is defined over the reflex field $E$ the orbit 
$$
 G(\bar F\lps t\rps)[\mu(t)] \subset \text{Aff}_G (\bar F),
$$
is equal to the set of $ \bar F$-points of a closed subvariety $X_{\mu}$ of $\text{Aff}_{G,E}= \text{Aff}_G \otimes_{F }E .$

\begin{Definition} {\rm
The local model $\Mloc_K(G,\{\mu\})$ is the flat projective scheme over $\Spec(\O_E)$ with $\calG\otimes_{\O_F}\O_E$-action given by the reduced Zariski closure of the image of 
\[
X_{\mu} \subset \text{Aff}_{G, E} \xrightarrow{\sim} \text{Aff}_{\und\calG }\times_{\mathbb{A}^1_{\O_F}} \Spec(E)
\]
 in the ind-scheme $\text{Aff}_{\und\calG}\times_{\mathbb{A}^1_{\O_F}} \Spec(\O_E)$.}
\end{Definition}

These ``PZ" local models of \cite{PZ} are independent of choices in their construction (\cite[Theorem 2.7]{HPR}) and have the following property (see \cite[Prop. 2.14]{HPR} and its proof).
\begin{prop}\label{Unr}
 If $F'/F$ is a finite unramified extension, then (equivariantly)
 $$
 \Mloc_K(G, \{\mu\}) \otimes_{\O_E}\O_{E'}\xrightarrow{\sim } \Mloc_{K'}(G\otimes_FF',\{\mu\otimes_FF'\}). $$
Note that here the reflex field $E'$ of $(G\otimes_FF', \{\mu\otimes_FF'\})$ is the join of $E$ and $F'$. Also, $K'$ is the parahoric subgroup of $G\otimes_FF'$ with $ K = K' \cap G $.\qed
\end{prop}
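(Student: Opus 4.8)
This is \cite[Prop. 2.14]{HPR}; the plan is to trace the base change $\Spec\O_{F'}[t]\to\Spec\O_F[t]$ through each ingredient of the construction of the PZ local model. The first and only non-formal step is to identify the smooth affine group scheme $\und\calG$ over $\O_F[t]$ attached to $(G,K)$ with that attached to $(G\otimes_F F',K')$ after base change, i.e. to show $\und\calG\otimes_{\O_F[t]}\O_{F'}[t]\cong\und\calG'$. Since $F'/F$ is unramified, $G$ still splits over a tame extension of $F'$, and under the canonical inclusion $\calB^e(G,F)\hookrightarrow\calB^e(G\otimes_F F',F')$ the point $x$ defining $K$ maps to a point whose connected stabilizer is exactly $K'$ — this is precisely the hypothesis $K=K'\cap G$. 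The spreading-out $\und\calG$ over $\O_F[t]$ involves auxiliary choices, but by \cite[Theorem 2.7]{HPR} the resulting local model is independent of them, so one may make compatible choices for $F$ and $F'$; granting this, $\und\calG\otimes_{\O_F[t]}\O_{F'}[t]\cong\und\calG'$, and specializing along $t\mapsto\pi$ gives in particular $\calG\otimes_{\O_F}\O_{F'}\cong\calG'$, hence $\calG\otimes_{\O_F}\O_{E'}\cong\calG'\otimes_{\O_{F'}}\O_{E'}$. The Beilinson--Drinfeld affine Grassmannian is defined by a functor on $\O_F[t]$-algebras that commutes with base change on $\mathbb{A}^1$, so $\mathrm{Aff}_{\und\calG}\times_{\mathbb{A}^1_{\O_F}}\mathbb{A}^1_{\O_{F'}}\cong\mathrm{Aff}_{\und\calG'}$, compatibly with the identifications of the fibres over $t=\pi$ with $\mathrm{Aff}_G$, resp. $\mathrm{Aff}_{G\otimes_F F'}$.

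Next I would treat the generic fibre. Because $\mu$ is already defined over $E$, the field of definition of $\{\mu\otimes_F F'\}$ for $G\otimes_F F'$ is the compositum $E'=E\cdot F'$, which is unramified over $E$ (compositum with an unramified extension). Under $\mathrm{Aff}_{G}\otimes_F E'\cong\mathrm{Aff}_{G\otimes_F F'}\otimes_{F'}E'$ the orbit $G(\bar F\lps t\rps)[\mu(t)]$ is visibly unchanged, so the Schubert variety $X_\mu$ pulls back to $X_{\mu\otimes_F F'}$. Thus, after base change to $\O_{E'}$, both the generic fibre and the ambient ind-scheme used in the definition of $\Mloc_K(G,\{\mu\})$ become exactly those used to define $\Mloc_{K'}(G\otimes_F F',\{\mu\otimes_F F'\})$.

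It remains to check that forming the \emph{reduced} Zariski closure commutes with the base change $\Spec\O_{E'}\to\Spec\O_E$. This is where unramifiedness enters essentially: $E'/E$ unramified makes $\O_E\to\O_{E'}$ finite \'etale, so $\Spec\O_{E'}\to\Spec\O_E$ is flat and open; pulling back along a flat open morphism commutes with topological closure, and pulling back along an \'etale morphism sends reduced schemes to reduced schemes, so the pullback of the reduced closure of $X_\mu$ in $\mathrm{Aff}_{\und\calG}\otimes_{\mathbb{A}^1_{\O_F}}\Spec\O_E$ is the reduced closure of $X_{\mu\otimes_F F'}$ in $\mathrm{Aff}_{\und\calG'}\otimes_{\mathbb{A}^1_{\O_{F'}}}\Spec\O_{E'}$ (working level by level in the ind-scheme). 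This gives the asserted isomorphism $\Mloc_K(G,\{\mu\})\otimes_{\O_E}\O_{E'}\cong\Mloc_{K'}(G\otimes_F F',\{\mu\otimes_F F'\})$, and it is automatically equivariant since the $\calG\otimes_{\O_F}\O_E$-action comes from the $\und\calG$-action and $\calG\otimes_{\O_F}\O_{E'}\cong\calG'\otimes_{\O_{F'}}\O_{E'}$ as established above. The genuine obstacle is the first step — the compatibility $\und\calG\otimes_{\O_F[t]}\O_{F'}[t]\cong\und\calG'$ of the $t$-deformed Bruhat--Tits group schemes — which must be extracted from the \cite{PZ}/\cite{HPR} construction (together with its independence of choices); everything downstream is base-change bookkeeping.
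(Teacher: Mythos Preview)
Your proposal is correct and matches the paper's approach: the paper simply cites \cite[Prop.~2.14]{HPR} and its proof (note the \qed with no argument given), and you have correctly identified this reference and sketched the content of that proof. Your expansion---tracing the base change through $\und\calG$, the Beilinson--Drinfeld Grassmannian, and the reduced closure, with the key input being the compatibility of the $\O_F[t]$-group schemes and the \'etaleness of $\O_E\to\O_{E'}$---is an accurate account of what lies behind the citation.
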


\begin{Remark}\label{LMremark}
{\rm Let us note that the PZ local models are not well behaved when $p$ divides the order of the algebraic fundamental group $\pi_1(G_{\rm der})$.
To correct this defect, one needs to adjust the definition by using a $z$-extension of $G$, as in \cite[2.6]{HPR}.
The resulting variant is better behaved: It satisfies a property of invariance under central extensions and should 
agree with the local model conjectured to exist in \cite[21.4]{Schber}, see \cite[Conjecture 2.16]{HPR}. In this paper, 
we only consider   groups $G$ with adjoint group $G_{\rm ad}\simeq (\SO(V))_{\rm ad}$.
In this case, $|\pi_1(G_{\rm der})|$ is a power of $2$. Since we assume that $p$ is odd, the local models of \cite{HPR} coincide with the PZ local models given as above. In particular, the central extension invariance property of \cite[Prop. 2.14]{HPR} holds
in the cases we consider.}
\end{Remark}

\subsection{Lattices over $\mathcal{O}[u]$ and orthogonal local models}\label{ss43}

We now concentrate our attention to $G=\SO(V)$, where $V$ is an $F$-vector space of dimension $d\geq 5$ equipped with a non-degenerate symmetric $F$-bilinear form $\langle \; , \; \rangle$. We consider the minuscule coweight $ \mu : \mathbb{G}_m \rightarrow \SO(V) $ to be given by $ \mu(t) = \text{diag} ( t^{-1}, 1,\ldots ,1, t ) $, defined over $E=F$. We take $K$ to be the parahoric subgroup of $\SO(V)$ which is the connected stabilizer of a vertex lattice $\Lambda\subset V$, with $\delta(\La)={\rm length}_{\O_F}(\Lambda^\vee/\Lambda)$.

For simplicity, we set $\O=\O_F$. 
 We extend the data of the vector space $V$ with its symmetric bilinear form $\lan\ ,\ \ran$ from $F$ to $\mathcal{O}[u,u^{-1}]$ by following the procedure in \cite[5.2, 5.3]{PZ}. This is simpler to explain when we are in one of the $4$ cases of \S \ref{ss42a}
 (which we can assume after an unramified extension). Then, we define $\mathbb{V} = \oplus^{d}_{i=1} \mathcal{O}[u,u^{-1}] \und{e}_i $ and let $\langle \; , \; \rangle \; : \mathbb{V} \times \mathbb{V} \rightarrow  \mathcal{O}[u,u^{-1}] $ to be a symmetric $\mathcal{O}[u,u^{-1}]$-bilinear form such that the value of $\langle \und{e}_i , \und{e}_j \rangle$ is the same as the one for $V$ but with $\pi$ replaced by $u$. Similarly, we define $\und{\mu}  :  \mathbb{G}_m \rightarrow \SO(\mathbb{V}) $ as above by using the $\{ \un{e}_i\}$ basis for $\mathbb{V}$.
We set 
\[
\mathbb{L} = \oplus_{i=1}^d \mathcal{O}[u]\cdot \un{e}_i  \subset \mathbb V.
\]
 From the above, we see that the base change of $(\mathbb{V}, \mathbb{L},\langle \; , \; \rangle )$ from $\mathcal{O}[u,u^{-1}]$ to $F$ given by $u \mapsto \pi$ is $(V,\Lambda,\langle \; , \; \rangle)$.

Let us now consider the local model $\Mloc(\Lambda)= \Mloc_K (\SO(V),\{\mu\})$ where $K$ is the parahoric stabilizer $K_\La^\circ$ of $\Lambda$. 

As in \cite{PZ}, we consider the smooth, affine group scheme $\underline{\mathcal{G}}$ over $\mathcal{O}[u]$ given by $g \in \SO(\mathbb{V}) $ that also preserve $\mathbb{L}$ and $\mathbb{L}^{\vee}$. The base change of $\underline{\mathcal{G}}$ by $u \mapsto \pi $ gives the Bruhat-Tits group scheme $\mathcal{G}=\calG_\La$ of $\SO(V) $ which is the stabilizer of the lattice chain $\Lambda \subset \Lambda^{\vee} \subset \pi^{-1} \Lambda$ as in \S \ref{ss22}. 
In this case, the global (“Beilinson-Drinfeld”) affine Grassmannian $\text{Aff}_{\underline{\mathcal{G}}} \rightarrow \Spec(\mathcal{O}[u])$ represents the functor that sends an $\mathcal{O}[u]$-algebra $R$, with  $u$ mapping to $r$, to the set of finitely generated projective $R[u]$-submodules $\mathcal{L}$ of 
\[
\mathbb{V}\otimes_{\mathcal{O}} R[(u-r)^{-1}]=\oplus_i R[u, u^{-1}, (u-r)^{-1}] \un e_i
\]
 which are such that 
\[
(u-r)^{N}\mathbb{L }_R\subset \mathcal{L} \subset (u-r)^{-N}\mathbb{L}_R
\]
 for some $N >0$ with $\calL/(u-r)^N\bbL_R$, $(u-r)^{-N}\mathbb{L}_R/\calL$ both $R$-projective,
  and which satisfy 
\[
  \mathcal{L}\buildrel{\delta}\over\subset \mathcal{L}^{\vee}\buildrel{d-\delta}\over\subset u^{-1}\calL 
 \]
  with all successive quotients $R$-finite projective of the indicated rank.
  Here, we set $\bbL_R=\bbL\otimes_\O R$.

As in \cite[12.7.2]{HPR}, or \cite[3.b]{Zachos}, we see that the local model\footnote{Occasionally, if the choice of $\La$ is clear from the context, we will omit it from the notation.} $\Mloc=\Mloc(\La)$ is a subfunctor of ${\rm {Aff}}_{\un \calG}\otimes_{\O[u]}\O$
(with the base change given by $u\mapsto \pi$). 

\begin{Definition}\label{MnaiveDef}
{\rm We set ${\rm M}^{\rm naive}={\rm M}^{\rm naive}(\La)$ to be the functor (a subfunctor of ${\rm {Aff}}_{\un \calG}\otimes_{\O[u]}\O$),
which sends an $\O$-algebra $R$ to the set of $R[u]$-modules $\calL$ that satisfy:
\begin{enumerate}
    \item $\mathcal{L}$ is a finitely generated projective $R[u]$-submodule of $\bbL\otimes_{O[u]}R[u, u^{-1}, (u-\pi)^{-1}]$.

    \item $ \mathcal{L}\buildrel{\delta}\over\subset \mathcal{L}^{\vee}\buildrel{d-\delta}\over\subset u^{-1}\calL $, with all successive quotients $R$-finite projective and of the indicated rank,

    \item $
   (u-\pi)\mathbb{L }_R\subset \mathcal{L} \subset (u-\pi)^{-1}\mathbb{L}_R$, with the quotients  $\calL/(u-\pi)\bbL_R$, $(u-\pi)^{-1}\mathbb{L}_R/\calL$, both $R$-finite projective of rank $d$,
   \smallskip
   
   \item
   $
   (u-\pi)\bbL^\vee_R \subset    \mathcal{L}^\vee \subset   (u-\pi)^{-1}\bbL^\vee_R$, with  
   $\mathcal{L}^\vee/ (u-\pi)\bbL^\vee_R$, $(u-\pi)^{-1}\mathbb{L}^\vee_R/\calL^\vee$, both $R$-finite  projective of rank $d$,
\smallskip

\item Consider the $R$-linear map 
\[
\Phi: \calL/(u-\pi)\bbL_R\to (u-\pi)^{-1}\bbL_R/\bbL_R= \Lambda_R
\]
given by the inclusion in (3). This is an $R$-map between two finitely generated projective $R$-modules of rank $d$.
We require that $\Phi$ has $R$-rank $\leq 1$, i.e. that 
\[
\wedge^2 \Phi=0.
\]

\item
Similarly, we consider
\[
\Psi: \calL^\vee/(u-\pi)\bbL^\vee_R  \to (u-\pi)^{-1}\bbL^\vee_R/\bbL^\vee_R=\La^\vee_R,
\]
given by the inclusion in (4), and require that 
\[
\wedge^2 \Psi=0.
\]
\end{enumerate}}
\end{Definition}
 
\begin{prop}\label{Mnaive} The functor ${\rm M}^{\rm naive}(\La)$ is represented by a closed projective subscheme $ {\rm M}^{\rm naive}(\La)$ of ${\rm {Aff}}_{\un \calG}\otimes_{\O[u]}\O$ with $\calG$-action. There is a $\calG$-equivariant closed immersion
\[
i: \Mloc(\La)\hookrightarrow {\rm M}^{\rm naive}(\La).  
\] 
\end{prop}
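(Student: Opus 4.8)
The plan is to exhibit ${\rm M}^{\rm naive}(\La)$ as a closed subscheme of the ind-scheme ${\rm {Aff}}_{\un\calG}\otimes_{\O[u]}\O$ cut out by boundedness, lattice-chain and determinantal conditions, to check that it is $\calG$-stable and projective, and then to observe that the minuscule orbit $X_\mu$ used to define $\Mloc(\La)$ already satisfies all of these conditions, so that its reduced closure $\Mloc(\La)$ lands inside ${\rm M}^{\rm naive}(\La)$.

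First I would deal with representability, projectivity and the $\calG$-action. Conditions (1) and (2) of Definition \ref{MnaiveDef} are exactly the conditions built into the functor ${\rm {Aff}}_{\un\calG}$ (finite generation and projectivity of $\calL$ over $R[u]$, and the lattice chain $\calL\buildrel{\delta}\over\subset \calL^\vee\buildrel{d-\delta}\over\subset u^{-1}\calL$ with locally free graded pieces of the stated rank), so they impose nothing new. Conditions (3) and (4) are boundedness: they confine $\calL$ and $\calL^\vee$ to the region between $(u-\pi)\bbL_R$ and $(u-\pi)^{-1}\bbL_R$ (resp. between $(u-\pi)\bbL^\vee_R$ and $(u-\pi)^{-1}\bbL^\vee_R$), hence restrict to a bounded, and therefore projective, closed subscheme of ${\rm {Aff}}_{\un\calG}\otimes_{\O[u]}\O$ (it is contained in a finite union of Schubert varieties; compare \cite{PZ}, \cite{HPR}). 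Over this projective scheme the $R$-modules $\calL/(u-\pi)\bbL_R$, $\Lambda_R$, $\calL^\vee/(u-\pi)\bbL^\vee_R$, $\Lambda^\vee_R$ spread out to locally free sheaves of rank $d$, so $\Phi$ and $\Psi$ become morphisms of rank-$d$ vector bundles, and the requirements $\wedge^2\Phi=0$, $\wedge^2\Psi=0$ of (5) and (6) are closed determinantal conditions (vanishing of the induced morphism $\wedge^2(\calL/(u-\pi)\bbL_R)\to\wedge^2\Lambda_R$, equivalently of all $2\times 2$ minors in local trivialisations). Intersecting, ${\rm M}^{\rm naive}(\La)$ is represented by a closed projective subscheme of ${\rm {Aff}}_{\un\calG}\otimes_{\O[u]}\O$. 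The group scheme $\calG=\un\calG\otimes_{\O[u]}\O$ acts on ${\rm {Aff}}_{\un\calG}\otimes_{\O[u]}\O$ by left translation; for any $\O$-algebra $R$ an element of $\calG(R)$ is an automorphism of $\bbL_R$ preserving $\bbL^\vee_R$ and the form, so translation preserves (1)--(4) and conjugates $\Phi$, $\Psi$ by isomorphisms, hence preserves (5)--(6). Thus $\calG$ stabilises ${\rm M}^{\rm naive}(\La)$.

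For the closed immersion $i$, recall that $\Mloc(\La)$ is by construction the reduced Zariski closure, inside ${\rm {Aff}}_{\un\calG}\otimes_{\O[u]}\O$, of the minuscule orbit $X_\mu$ attached to $\mu(t)={\rm diag}(t^{-1},1,\dots,1,t)$. Since ${\rm M}^{\rm naive}(\La)$ is closed, it suffices to prove $X_\mu\subseteq{\rm M}^{\rm naive}(\La)\otimes_\O F$; and as $X_\mu$ is reduced and of finite type over $F$, this reduces to checking the defining conditions at the $\bar F$-points of $X_\mu$, i.e. on the $G(\bar F\lps t\rps)$-translates of $[\mu(t)]$. There conditions (1) and (2) hold automatically; conditions (3) and (4) hold because the relative position of the lattice attached to a point of $X_\mu$ with respect to $\bbL_R$ (and of its dual with respect to $\bbL^\vee_R$) is bounded by $\mu$, which is minuscule with exactly this bound; and the same minusculeness forces the image of $\Phi$ (resp. $\Psi$) to be a line, so that $\wedge^2\Phi=0$ and $\wedge^2\Psi=0$. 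Being closed, these conditions propagate from the base point to all of $X_\mu$ and to its closure. Hence $X_\mu\subseteq {\rm M}^{\rm naive}(\La)\otimes_\O F$, the reduced closure $\Mloc(\La)$ is a reduced closed subscheme of ${\rm M}^{\rm naive}(\La)$, and the inclusion $i\colon\Mloc(\La)\hookrightarrow{\rm M}^{\rm naive}(\La)$ is a closed immersion; it is $\calG$-equivariant since both $\calG$-actions are restricted from that on ${\rm {Aff}}_{\un\calG}\otimes_{\O[u]}\O$.

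The step I expect to be the main obstacle is this last verification on $X_\mu$: that along the $\mu$-orbit the maps $\Phi$ and $\Psi$ genuinely have rank $\leq 1$ (and not merely that $\calL\subset (u-\pi)^{-1}\bbL_R$). This calls for a concrete description of $[\mu(t)]$ and of its $G(\bar F\lps t\rps)$-translates in the basis $\{\un e_i\}$ of \S\ref{ss42a}, carried out with care for the two ``dual'' sides $\bbL$ and $\bbL^\vee$; once the rank bound is established at the base point, the closedness of the determinantal locus does the rest. The representability and $\calG$-equivariance parts are, by contrast, the routine closedness of boundedness and determinantal conditions in an affine Grassmannian.
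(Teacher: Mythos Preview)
Your proposal is correct and amounts to a careful unpacking of what the paper simply calls ``standard'' (with references to \cite[12.7.2]{HPR} and \cite[3.b]{Zachos}): closedness of boundedness and determinantal loci in the affine Grassmannian, stability under $\calG$, and containment of $X_\mu$ by checking the minuscule base point.

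On your flagged obstacle: the rank-$\leq 1$ check is less delicate than you fear. Conditions (3)--(6) are phrased relative to $\bbL$ and $\bbL^\vee$, and the positive loop group $G(\bar F\lps u-\pi\rps)$ acting on $X_\mu$ preserves these lattices; hence it suffices to verify (3)--(6) at the single base point $[\mu(t)]$. There, in a basis diagonalising $\mu$, the lattice is $(u-\pi)^{-1}\un e_1\oplus \un e_2\oplus\cdots\oplus \un e_{d-1}\oplus (u-\pi)\un e_d$, and the image of $\Phi$ in $(u-\pi)^{-1}\bbL/\bbL$ is visibly the line spanned by the class of $(u-\pi)^{-1}\un e_1$; the dual computation gives the same for $\Psi$. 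No orbit-wide bookkeeping in the basis of \S\ref{ss42a} is needed.
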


\begin{proof}
The proof of the first part (representability) is standard. The second part follows from the construction of $\Mloc(\La)$.
See \cite[12.7.2]{HPR}, or \cite[3.b]{Zachos}, for more details.
\end{proof}

\begin{Remark} 
{\rm The immersion $i$ is not an isomorphism, i.e.
the conditions (1)-(6) are necessary but not always sufficient for 
$\calL$ to correspond to an $R$-valued point of $\Mloc(\La)$. 

In fact,  the generic fibers ${\rm M}^{\rm naive}(\La)\otimes_\O F$
and $\Mloc(\La)\otimes_\O F$ are {\sl not} equal. Indeed, ${\rm M}^{\rm naive}(\La)(F)$ contains the additional $F$-point $\calL=\bbL_F$ which is not in the orbit $X_\mu=\cQ(V)$ of $\und\mu$ in ${\rm {Aff}}_{\un \calG}\otimes_{\O[u]}F={\rm {Aff}}_G$. We can easily see that the reduced locus of ${\rm M}^{\rm naive}(\La)\otimes_\O F$ decomposes into the (disjoint) union of $\cQ(V)=\Mloc(\La)\otimes_\O F$ with this point.
By its definition,
$\Mloc(\La)$ is the (reduced) Zariski  closure of $\cQ(V)$ in ${\rm M}^{\rm naive}(\La)$.

However,  ${\rm M}^{\rm naive}(\La)$ is not very different from $\Mloc(\La)$: We will see in \S\ref{coro} that 
${\rm M}^{\rm naive}(\La)$ is the push-out
of $\Mloc(\La)$ and $\Spec(\O)$, ``glued" at the point $*=\Spec(k)$. In particular,  when we just regard
 the underlying topological spaces, the image of $i$ only misses the isolated point  of the generic fiber of ${\rm M}^{\rm naive}(\La)$ that
 corresponds to $\calL=\bbL_F$. The scheme ${\rm M}^{\rm naive}(\La)$ is $\O$-flat but has non-reduced special fiber with the non-reduced locus supported at $*$.
  }
  \end{Remark}

  \subsection{Spinor local models}\label{ssSpinLM}
 Recall the constructions and considerations of \S \ref{ss23} and \S \ref{ss24}. 
 In particular, set $G=\GSpin(V)$ and recall the central extension
 \[
 1\to \Gm\to \GSpin(V)\xrightarrow{\alpha} \SO(V)\to 1.
 \]
 Define the cocharacter $\wti \mu:\Gm\to G$ by
\[
\mu(t)=t^{-1}f_1f_{d}+f_{d}f_1 
\]
where $f_1$, $f_d$ are part of a basis $(f_i)_i$ of $V$ such that $\lan f_1, f_d \ran=1$, $\lan f_1, f_1\ran=\lan f_d, f_d\ran=0$,
and the arithmetic on the right hand side takes place in the Clifford algebra $C(V)$.
 Under the  representation $\alpha: G \to \SO(V)$, we have 
\[
(\alpha\cdot \wti\mu(t)) \cdot f_i = \wti\mu(t) f_i\wti\mu(t)^{-1}=\begin{cases}
t^{-1} f_i & \mbox{if }i=1 \\
f_i & \mbox{if } 2 \le i < d \\
t f_i & \mbox{if }i=d. 
\end{cases}
\]
Hence, $\alpha\cdot \wti\mu: \Gm\to \SO(V)$ is given by the minuscule $\mu(t)={\rm diag}(t^{-1}, 1, \ldots , 1, t)$.

Now let $x\in \calB^e(G, F)$ be a point in the extended Bruhat-Tits building of $G(F)$ such that $x$ maps to $x_\La$
under $\alpha_*: \calB^e(G, F)\to \calB(\SO(V), F)$, where $\La$ is a vertex lattice of $V$. Then the corresponding parahoric group of 
$G$ is given by
\[
K=\calG_x(\O)=\{g\in \GSpin(V)(F)\ |\ g\Lambda g^{-1}= \Lambda, \ \eta(g)\in \O^\times\}.
\]
By Remark \ref{LMremark} and \cite[Proposition 12.4]{HPR}, there is an equivariant isomorphism between local models
\begin{equation}\label{spinlocm}
\Mloc_{ K}(G, \{\wti\mu\})\xrightarrow{\sim} \Mloc_{K^\circ_\La}(\SO(V), \{\mu\})=\Mloc(\La)
\end{equation}
where $K^\circ_\La$ is the parahoric stabilizer of $\La$ in $\SO(V)$. 

The equivariance here is meant in the following sense: 
The natural action of $\calG_x=\calG^\circ_x$ on $\Mloc_{K}(G, \{\wti\mu\})$
factors via the quotient $\calG_x/\Gm\simeq \calG^\circ_\La$ (see (\ref{ces3})) and, under the isomorphism 
(\ref{spinlocm}), agrees with the action of the corresponding parahoric group scheme $\calG^\circ_\La$ of $\SO(V)$
on $\Mloc(\La)$. 

Hence, all our results on $\Mloc(\La)$, are really also about
local models for the spin similitude groups.

\section{Equations and a resolution for the local model}\label{sEq}

 \subsection{An open affine of the local model following \cite{Zachos}}\label{ss44}

 We continue with the same notations, see especially \S \ref{ss43}. Our goal is to describe explicitly certain open affine subschemes $U^{\rm naive}$
 and $U$ of ${\rm M}^{\rm naive}(\La)$ and $\Mloc(\La)$, respectively. We will work over $\br \O$ but, for simplicity, sometimes omit the base
change from the notation.

We start with $U^{\rm naive}$. For an $R$-valued point $\calL$ of ${\rm M}^{\rm naive}={\rm M}^{\rm naive}(\La)$ we set
\[
\bar\calL:={\rm Im}(\calL)\subset {(u-\pi)^{-1}\bbL_R}/{(u-\pi)\bbL_R}
\]
which determines $\calL$ uniquely.\footnote{Under
$
(u-\pi)^{-1}\bbL/(u-\pi)\bbL\xrightarrow{\ \sim\ } { \bbL_R}/{(u-\pi)^2\bbL_R}
$
given by multiplication by $(u-\pi)$, the module $\bar\calL$ can  be identified with 
$
\bar\calL={\rm Im}((u-\pi): \calL\to  { \bbL}_R/{(u-\pi)^2\bbL_R})
$
used in \cite[3.b]{Zachos}.}

Recall we are fixing a basis 
\[
\bbL=\oplus_{i=1}^d\br\O[u]\cdot \und e_i
\]
which corresponds to the basis $(e_i)$ of $\Lambda=\bbL/(u-\pi)\bbL$. This, in turn, gives an $R$-basis
$\{\und e_i, (u-\pi)^{-1}\und e_i\}$ of $(u-\pi)^{-1}\bbL_R/\bbL_R$, for any $\O$-algebra $R$. We  consider the
open subscheme $U^{\rm naive}$ of ${\rm M}^{\rm naive}$ where $\bar\calL$ projects isomorphically onto the quotient obtained by 
annihilating the second part of the basis, i.e. $\{(u-\pi)^{-1}\und e_i\}$. We will now describe $U^{\rm naive}$ explicitly. 

We set
\[
\bar\calL=\{v +X(u-\pi)^{-1}  v\ |\ v=\sum^d_{i=1} v_i \un e_i\}
\]
where $X$ is a variable $d\times d$ matrix with entries in $R$.
This naturally produces an $R$-basis $\un e_i+X(u-\pi)^{-1}\un e_i$ for $\bar\calL$, i.e. an $R$-module isomorphism $R^d\simeq \bar\calL$.
Then the map 
\[
\Phi: \calL/(u-\pi)\bbL_R=\bar\calL\hookrightarrow (u-\pi)^{-1}\bbL_R/(u-\pi)\bbL_R\to (u-\pi)^{-1}\bbL_R/\bbL_R
\]
 of \S \ref{ss43} (5),  is given by the matrix $X$. 
 
Similarly, we can write
\[
\bar\calL^\vee=\{v +Y(u-\pi)^{-1} v\ |\ v=\sum^d_{i=1} v_i \un e^\vee_i\}.
\]
where $Y$ is a variable $d\times d$ matrix with entries in $R$.
As above, the map 
\[
\Psi: \calL^\vee/(u-\pi)\bbL^\vee_R = \bar\calL^\vee \hookrightarrow (u-\pi)^{-1}\bbL^\vee_R/(u-\pi)\bbL^\vee_R\to (u-\pi)^{-1}\bbL^\vee_R/\bbL^\vee_R
\]
 of \S \ref{ss43} (6), is given by the matrix $Y$. 
 
 a) The condition that $\calL^\vee $ is the dual of $\calL$, gives that
we have $\lan \bar\calL, \bar\calL^\vee\ran=0$ under the $R$-base change of the pairing
\[
\lan\ ,\ \ran: (u-\pi)^{-1}\bbL/(u-\pi)\bbL\times (u-\pi)^{-1}\bbL^\vee/(u-\pi)\bbL^\vee\to (u-\pi)^{-2} \br\O[u]/\br\O[u].
\]
Hence, on this affine chart 
\[
\lan v+X(u-\pi)^{-1} v, w+Y(u-\pi)^{-1} w\ran=0.
\]
This is equivalent to
\begin{equation}\label{c1}
Y+X^t=0,\qquad X^t\cdot Y=0
\end{equation}
since $\lan Xv, w\ran=\lan w, X^tw\ran$. 

b) The conditions $\wedge^2\Phi=0$, $\wedge^2\Psi=0$, immediately translate to
\begin{equation}\label{c2}
\wedge^2X=0,\qquad \wedge^2 Y=0.
\end{equation}

c) As in \cite[3.c]{Zachos}, we see that the condition $ \mathcal{L} \subset  \mathcal{L}^{\vee}$ amounts to
\begin{equation}\label{c3}
  X^{t} S_1 X - 2 \pi S X=0, \quad X^{t} S_2 X  + 2 SX =0.
\end{equation}
Similarly, the condition $\calL^\vee \subset u^{-1}\calL $ amounts to
\begin{equation}\label{c4}
  Y^{t} S_1 Y + 2 (S_2 Y + \pi S_1 Y)=0,\quad Y^{t} S_2 Y  - 2\pi (S_2 Y + \pi S_1 Y ) =0. 
\end{equation}
In the above, $S_1$, $S_2$, are the matrices with $S=S_1+\pi S_2=(\lan e_i, e_j\ran)_{i, j}$ as in \S \ref{ss42a}.

d) Finally, $\bar\calL$ and $\bar\calL^\vee$ should be $u$-stable. This translates to 
\[
X^2=0,\qquad Y^2=0.
\] 
However, these are implied by the equations (\ref{c1}) above.

Denote by $J_m$ the unit antidiagonal matrix of size $m$,
\[
 J_m := \begin{pmatrix} 
          &  & 1  \\
          & \iddots  & \\
         1 &  &   \\
  \end{pmatrix}. 
    \]

We now set $X=(x_{ij})$, $Y=(y_{ij})$, and denote by  $\mathfrak I^{\text{naive}}$ the ideal of the polynomial ring $\br\O[(x_{ij}), (y_{ij})]$ in $2d^2$ variables, which is generated by the entries of the above relations (\ref{c1}), (\ref{c2}), (\ref{c3}), (\ref{c4}). Also denote by $U^{\text{naive}}$ the corresponding scheme over $\Spec(\br\O)$ given by pairs of $d \times d$ matrices $X$, $Y$ which satisfy these relations (for more details see Section $3$ in \cite{Zachos}).  
Set 
\[
{U}^{\text{naive}}=\Spec(\br\O[X, Y]/\mathfrak I^{\rm naive})
\]
which is an open subscheme of ${\rm M}^{\rm naive}(\La)$. Proposition \ref{Mnaive} realizes $\Mloc(\La)$ as a closed subscheme of ${\rm M}^{\rm naive}(\La)$, and we can set
\[
U=U^{\rm naive}\cap \Mloc(\La)=\Spec(\br\O[X, Y]/\mathfrak I),
\]
with $\mathfrak I^{\rm naive}\subset \mathfrak I$. The scheme $U$ is an open affine subscheme of $\Mloc(\La)$.
Denote by $\br\O(U^{\rm naive})$, resp. $\br\O(U)$, the affine coordinate rings of $U^{\rm naive}$, resp. $U$, over $\br\O$.

Following \cite[3.c]{Zachos}, we distinguish two cases:

I) The integers $d$ and $\delta$ have the same parity (cases (1) and (4) of \S \ref{ss42a}):

We then break up the matrix $X$ into blocks as follows. We write
\begin{equation}\label{mat.X}
X =\left[\ 
\begin{matrix}[c|c|c]
D_1 & C_1 & D_2 \\ \hline
B_1 & A & B_2 \\ \hline
D_3 & C_2 & D_4
\end{matrix}\ \right],
\end{equation}
where $A$ is of size $\delta\times\delta$, $D_1$, $D_2$, $D_3$, $D_4$, are of size $(n-r)\times (n-r)$. (Recall, $d=2n$, $\delta=2r$, or
$d=2n+1$, $\delta=2r+1$.) We set 
\[
{\mathrm {T}}(B_1|B_2)={\rm Tr}(B_2 J_{n-r} B_1^{t}J_{\delta}).
\]

II)  The integers $d$ and $\delta$ have different parity (cases (2) and (3) of \S \ref{ss42a}). In this case, we again decompose
the matrix $X$ in $9$ blocks as above, but the recipe for the dimensions of these blocks is somewhat different.
In order to define the submatrices $A$, $B_{i}$, $C_j$, $D_j$,  giving the block decomposition of $X$ we set: 
$$
r' =
\left\{\begin{array}{ll} r  & \mbox{if } \delta =2r  \\ r+1 & \mbox{if } \delta=2r+1. \end{array} \right.
$$
Then we write the matrix $X$ as before, with blocks $ A$ of size $(\delta+1) \times (\delta+1)$, $D_1$, $D_2$, $D_3$, $D_4$ of size $(n-r') \times (n-r')$.

We denote by $A' $ the $\delta\times \delta$ matrix which is obtained from $A$ by erasing the part that is in the $(n+1)$-row and $(n+1)$-column of $X$. Similarly we denote by $B'_1,B'_2$ the $\delta \times (n- r')$ matrices which are obtained from $B_1$, $B_2$ by erasing the part that is on the $(n+1)$-row of $X$. Lastly, we denote by $E $ the $(r'+1 )$-column of $A$ and $E'$ the  $(r'+1 )$-column of $A$ with the $(n+1)$-entry erased. (Recall, $d=2n$, $\delta=2r+1$, or
$d=2n+1$, $\delta=2r$.)  We set
\[
\mathrm T(B'_1| E'| B'_2)= {\rm Tr}(  ( B'_2 J_{n-r'} (B'_1)^{t}  +\frac{1}{2}   E' (E')^{t})J_{\delta} ).
\]

Finally, for simplicity, we set
\[
Z=
\begin{cases}
[B_1|B_2], \ \ \ \ \ \hbox{\rm if\ }d\equiv\delta\, {\rm mod}\ 2,\\
[B_1'|E'|B'_2], \ \hbox{\rm if\ } d\not\equiv\delta\, {\rm mod}\ 2.
\end{cases}
\]
Then $Z=(z_{ij})\in {\rm Mat}_{\delta\times (d-\delta)}$, in both cases.

\begin{thm}\label{Za1}  
(\cite{Zachos}) The inclusion  $\br\O[Z]\to \br\O[X, Y]$ induces isomorphisms
\begin{eqnarray}
  {\br\O[Z]}/{( \wedge^2 Z,  {\mathrm {T}}(Z)+ 2 \pi)}&\xrightarrow{\sim}& \br\O(U) ,\\
 {\br\O[Z]}/{( \wedge^2 Z, ({\mathrm {T}}(Z) + 2 \pi)\cdot Z)} &\xrightarrow{\sim} &  \br\O(U^{\rm naive})  .
 \end{eqnarray}
 \end{thm}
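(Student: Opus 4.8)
The plan is to eliminate from the presentation $\br\O(U^{\rm naive}) = \br\O[X,Y]/\mathfrak I^{\rm naive}$ every coordinate other than the entries of $Z$, and then to identify precisely which relations survive among those entries. First I would use (\ref{c1}) to get $Y = -X^{t}$ (together with the consequences $X^{2}=Y^{2}=0$ noted in part (d)), so that $\br\O(U^{\rm naive})$ is already a quotient of $\br\O[X]$ and the relations (\ref{c4}) turn into further relations among the entries of $X$. Next, working inside $\br\O[X]$ and using the block decomposition (\ref{mat.X}) adapted to the normal form $S=S_{1}+\pi S_{2}$ of \S\ref{ss42a}, I would solve the self-duality relations (\ref{c3}) (the condition $\calL\subseteq\calL^{\vee}$) for the blocks $A$ — or $A'$, $E'$ in the unequal-parity case — and $C_{1},C_{2},D_{1},\dots,D_{4}$ as explicit $\br\O$-polynomials in the entries of $B_{1},B_{2}$ (resp. $B_{1}',E',B_{2}'$), i.e. in $Z$. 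The mechanism is that $\wedge^{2}X=0$ forces the columns of $X$ to be pairwise proportional, so each column of $X$ is an explicit scalar multiple of a column sitting inside the $Z$-block, and (\ref{c3}), read blockwise against $S_{1}$ and $S_{2}$, pins down the multipliers. This is exactly the computation carried out in \cite[3.c]{Zachos}, and it is what forces the split into the two parity cases (the auxiliary column $E$ is present only when $d\not\equiv\delta$).

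Having rewritten every generator of $\mathfrak I^{\rm naive}$ in terms of $Z$ alone, I would then read off the residual ideal. The $2\times 2$ minors of $X$ not involving the $Z$-block become consequences of $\wedge^{2}Z=0$ after substitution, so only $\wedge^{2}Z=0$ survives from (\ref{c2}); and the remaining equations of (\ref{c3})--(\ref{c4}) collapse to the single vector relation $(\mathrm T(Z)+2\pi)\cdot Z=0$, which is where the trace expression $\mathrm T$ enters. Concretely this gives a two-sided inverse to the natural map $\br\O[Z]/(\wedge^{2}Z,(\mathrm T(Z)+2\pi)Z)\to\br\O(U^{\rm naive})$, proving the second isomorphism.

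For the first isomorphism, recall $U=U^{\rm naive}\cap\Mloc(\La)$ and that $\Mloc(\La)$ is the reduced Zariski closure in $\rM^{\rm naive}(\La)$ of the generic fibre $\cQ(V)$. On this chart the locus $\cQ(V)\cap U^{\rm naive}$ is cut out over $\br F$ by $\wedge^{2}Z=0$ and $\mathrm T(Z)+2\pi=0$, while the extra point $\calL=\bbL_{F}$ of $\rM^{\rm naive}_{F}$ is the origin $Z=0$, where $\mathrm T(Z)+2\pi=2\pi\neq0$, and is therefore discarded. It then suffices to check that $\br\O[Z]/(\wedge^{2}Z,\mathrm T(Z)+2\pi)$ is $\br\O$-flat, reduced, and has the affine quadric as generic fibre: this follows from the fact that the determinantal ring $\br\O[Z]/(\wedge^{2}Z)$ is a normal Cohen--Macaulay domain (the Segre cone) in which $\mathrm T(Z)+2\pi$ is a nonzerodivisor reducing modulo $\pi$ to the nonzero element $\mathrm T(Z)$, so that cutting by it preserves $\br\O$-flatness and the special fibre is generically reduced.

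The main obstacle is the middle step: carrying out the block elimination cleanly, i.e. verifying that (\ref{c3}) really determines $A$ (or $A',E'$), the $C_{i}$ and the $D_{j}$ as honest polynomials in $Z$ with no need for any localization, and --- most importantly --- that substituting these back into the full set (\ref{c1})--(\ref{c4}) produces no relations among the $z_{ij}$ beyond $\wedge^{2}Z$ and the single scalar one. This bookkeeping over the four normal forms of \S\ref{ss42a} is the technical heart of \cite{Zachos}; in the present paper one can quote it, leaving only the final identification of the surviving relations and the flat-closure argument passing from $U^{\rm naive}$ to $U$ to be spelled out.
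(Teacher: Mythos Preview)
Your overall strategy---eliminate $Y$ via $Y=-X^t$, express the remaining blocks of $X$ as polynomials in the entries of $Z$, and identify the residual relations---matches the paper's. Two points of difference are worth noting.

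First, a small correction to the elimination step: the expression of the $A$-block in terms of $B_1,B_2$ (relation (7) in the paper's list $\mathfrak R$) is derived from (\ref{c1}) together with (\ref{c4}), not from (\ref{c3}); only the $C_i$ and $D_j$ come from $X^t S_1 X - 2\pi SX\in\mathfrak I^{\rm naive}$. Your ``column proportionality'' description is a useful heuristic but is not the actual mechanism over a general base ring, where one cannot divide by matrix entries; the argument proceeds by direct block manipulation of the matrix identities.

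Second, and more substantially, the paper reverses your order and sidesteps what you correctly identify as the main obstacle. You propose to establish the $U^{\rm naive}$ isomorphism first, by checking directly that substitution of the block formulas into (\ref{c1})--(\ref{c4}) yields no relation among the $z_{ij}$ beyond $\wedge^2 Z$ and $(\mathrm T(Z)+2\pi)Z$, and you defer this bookkeeping to \cite{Zachos}. The paper instead proves the $U$ isomorphism first, by a dimension argument: once $\br\O(U)$ is known to be a quotient of $\calA:=\br\O[Z]/(\wedge^2 Z,\,\mathrm T(Z)+2\pi)$, one checks that $\calA$ is an integral $\br\O$-flat algebra of relative dimension $d-2$ (a hypersurface in the Cohen--Macaulay Segre cone); since $U\subset\Mloc(\La)$ is also integral of the same dimension, the surjection $\calA\twoheadrightarrow\br\O(U)$ is an isomorphism. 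The $U^{\rm naive}$ statement is then deduced by a push-out: $\br\O(U^{\rm naive})$ is a quotient of $\calA_+:=\br\O[Z]/(\wedge^2 Z,\,(\mathrm T(Z)+2\pi)Z)$, and must also surject onto both $\br\O(U)=\calA$ and $\br\O=\br\O[Z]/(Z)$ (the point $\calL=\bbL$); since $\calA_+$ is exactly the fibre product $\calA\times_{\bar k}\br\O$, equality follows. Your route is valid but outsources the hardest step; the paper's dimension-plus-push-out argument is self-contained and avoids the verification entirely.
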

 
\subsection{}\label{coro} 
This is essentially contained in \emph{loc. cit.} but, for completeness, we will also give the
argument below. Before we do that, we discuss some corollaries:
 
 A) By an explicit calculation, we find that
 \[
 \mathrm T(Z)=\frac{1}{2}\sum_{1\leq i\leq \delta, 1\leq j\leq d-\delta} z_{i\ d-\delta+1-j}\, z_{\delta+1-i\ j}.
 \]
 (The same expression is valid in both cases, of same (I), or different (II), parity.)
 The result stated in the introduction follows: 
 
 Denote by $\mathscr D^{2}_{\delta\times (d-\delta)}=\{Z\ |\ \wedge^2Z=0\}\subset {\rm Mat}_{\delta \times (d-\delta)}$ the ``determinantal" subscheme of the affine space of matrices $Z$ over $\Spec(\br\O)$. 

\begin{thm}\label{Thmexplicit}
The affine chart $U\subset \Mloc(\La)$ is isomorphic to the closed  subscheme $\mathscr D_{\rm T}$ of the determinantal scheme $\mathscr D^{2}_{\delta\times (d-\delta)}$ which is
defined by the quadratic equation
\[
\sum_{1\leq i\leq \delta, 1\leq j\leq d-\delta} z_{i\ d-\delta+1-j}\, z_{\delta+1-i\ j}=-4\pi.
\]
\end{thm}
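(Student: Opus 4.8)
The plan is to deduce Theorem~\ref{Thmexplicit} directly from Theorem~\ref{Za1} together with an explicit evaluation of the trace form $\mathrm{T}(Z)$. By the first isomorphism of Theorem~\ref{Za1} we have $\br\O(U)\simeq \br\O[Z]/(\wedge^2 Z,\ \mathrm{T}(Z)+2\pi)$, and since $\mathscr D^{2}_{\delta\times(d-\delta)}$ is cut out inside the affine space of matrices $Z$ exactly by $\wedge^2 Z=0$, it suffices to establish the polynomial identity
\[
\mathrm{T}(Z)=\frac{1}{2}\sum_{1\le i\le \delta,\ 1\le j\le d-\delta} z_{i\ d-\delta+1-j}\, z_{\delta+1-i\ j}.
\]
Granting this, the equation $\mathrm{T}(Z)+2\pi=0$ is equivalent (as $2$ is a unit in $\br\O$, $p$ being odd) to $\sum_{i,j} z_{i\ d-\delta+1-j}\, z_{\delta+1-i\ j}=-4\pi$, so the ideals defining $U$ and $\mathscr D_{\rm T}$ inside $\br\O[Z]$ coincide and the asserted isomorphism of affine schemes follows.

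First I would treat the equal-parity cases (I), where $Z=[B_1\mid B_2]$ with $B_1,B_2$ of size $\delta\times m$, $m:=n-r$, and $d-\delta=2m$. Expanding $\mathrm{T}(Z)=\mathrm{Tr}(B_2 J_{m} B_1^{t} J_{\delta})$ entrywise, the antidiagonal $J_m$ reverses the column index of $B_2$ and $J_\delta$ reverses the row index of $B_1$, giving $\mathrm{T}(Z)=\sum_i\sum_k (B_2)_{i,\,m+1-k}(B_1)_{\delta+1-i,\,k}$. On the other side, under $Z=[B_1\mid B_2]$ one has $z_{i,c}=(B_1)_{i,c}$ for $c\le m$ and $z_{i,c}=(B_2)_{i,c-m}$ for $c>m$; substituting this into the double sum on the right and separating the ranges $1\le j\le m$ and $m+1\le j\le 2m$, each half equals $\mathrm{T}(Z)$ — the second half after the reindexing $j\mapsto 2m+1-j$, $i\mapsto \delta+1-i$, which is precisely the involution $c\mapsto d-\delta+1-c$ exchanging the two blocks. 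This produces the factor $\tfrac12$.

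Then I would do the unequal-parity cases (II), where $Z=[B_1'\mid E'\mid B_2']$ with a distinguished middle column $E'$ and $\mathrm{T}(Z)=\mathrm{Tr}\big((B_2' J_{m'}(B_1')^{t}+\tfrac12 E'(E')^{t})J_\delta\big)$, $m':=n-r'$, $d-\delta=2m'+1$. The two $B'$-blocks contribute to the terms of the right-hand sum with $j$ in the outer ranges exactly as in case (I), yielding $2\,\mathrm{Tr}(B_2' J_{m'}(B_1')^{t}J_\delta)$; the single value of $j$ that selects the middle column of $Z$ on both factors contributes $\sum_i (E')_i(E')_{\delta+1-i}=2\,\mathrm{Tr}\big(\tfrac12 E'(E')^{t}J_\delta\big)$. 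Adding these gives $2\,\mathrm{T}(Z)$ once more, hence the same factor $\tfrac12$.

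The main obstacle is entirely bookkeeping: one must keep the block-decomposition conventions of~\S\ref{ss44} straight — the distinction between $r$ and $r'$, the sizes $n-r$ versus $n-r'$ of the corner blocks, and, in case (II), the precise effect of erasing the $(n+1)$-st row and column — so that the entries of $B_1,B_2$ (resp.\ $B_1',E',B_2'$) are correctly matched with the entries $z_{ij}$ of $Z$ and the antidiagonal reversals land at the indices claimed. No new ideas beyond Theorem~\ref{Za1} are needed; once the trace identity is checked, the scheme-theoretic conclusion is immediate and the stated equation $\sum z_{i\ d-\delta+1-j}z_{\delta+1-i\ j}=-4\pi$ drops out.
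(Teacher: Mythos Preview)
Your proposal is correct and follows exactly the paper's approach: the paper deduces Theorem~\ref{Thmexplicit} from Theorem~\ref{Za1} by the single observation that an explicit calculation gives $\mathrm{T}(Z)=\tfrac{1}{2}\sum_{i,j} z_{i\,d-\delta+1-j}\,z_{\delta+1-i\,j}$ in both parity cases, and you have supplied precisely that calculation (which the paper leaves to the reader). Your bookkeeping in cases (I) and (II) is accurate, including the symmetry reindexing that produces the factor $\tfrac12$ and the handling of the middle column $E'$ in the unequal-parity case.
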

\begin{Remark}
{\rm As in \cite[Section 5]{Zachos}, one can see, by using the classical result that the determinantal scheme is Cohen-Macaulay, that
$\mathscr D_{\rm T}$ above is $\br\O$-flat, Cohen-Macaulay and of relative dimension $d-2$. This observation is used in the proof 
of Theorem \ref{Za1}: It is needed to establish that the $\br\O$-algebra giving $U$ in the statement above is indeed  $\br\O$-flat. 
It easily follows that $\Mloc(\La)$ is also Cohen-Macaulay. }
\end{Remark}

B) By Theorem \ref{Za1},
\begin{equation}
\br\O(U)=\br\O(U^{\rm naive})/({\rm T}(Z)+2\pi),
\end{equation}
where we slightly abuse notation by denoting by ${\rm T}(Z)+2\pi$ the image of this element in the quotient ring $\br\O(U^{\rm naive})$.
We can easily check that the annihilator of the element ${\rm T}(Z)+2\pi$
 in the coordinate ring $\br\O(U^{\rm naive})$ is the ideal 
$(Z)$ generated by the entries of $Z$. 
We obtain an exact sequence
\[
0\to \br\O(U^{\rm naive})\to \br\O(U)\times \br\O\xrightarrow{\ } k\to 0,
\]
where the second map is the difference of the reductions modulo the maximal ideals $(Z)+ (\pi)$ and $(\pi)$.
Hence,
\[
\br\O(U^{\rm naive})=\br\O(U)\times_{k} \br\O
\]
where, on the right hand side, we have the fibered product of rings.
This exhibits $U^{\rm naive}$ as the push-out
\begin{equation*}\label{pushoutU}
\begin{tikzcd}
\Spec(\bar k)\arrow[r, hook]\arrow[d, hook]&\Spec(\O)\arrow[d, hook, "X=Y=0"]\\
U \ \arrow[r, hook, "i"] & \ {U}^{\rm naive}. 
\end{tikzcd}
\end{equation*}
It easily follows that ${\rm M}^{\rm naive}(\La)$ is also a push-out:
 \begin{equation}\label{pushout}
\begin{tikzcd}
\Spec(k)\arrow[r, hook]\arrow[d, hook, "*"']&\Spec(\O)\arrow[d, hook, "\calL=\bbL"]\\
\Mloc(\La) \ \arrow[r, hook, "i"] & \ {\rm M}^{\rm naive}(\La). 
\end{tikzcd}
\end{equation}

We now give the proof of Theorem \ref{Za1}.

\begin{proof} This is a variation of the work in \cite{Zachos}. 
We explain a simpler version of the argument. (We will also omit some of the 
explicit calculations.)

We start with the case (I) of same parity $d\equiv\delta\, {\rm mod}\, 2$.

First,  we prove that $\br\O(U^{\rm naive})$, resp. $\br\O(U)$, are quotient rings of $\br\O[B_1, B_2]$:

Let $\mathfrak R$ be the ideal generated by (the entries of) the elements $(1)$-$(8)$:
\begin{multicols}{2}
    \begin{enumerate}
         \item $D_1 +\frac{1}{2} J_{n-r} B_2^{t}J_{\delta} B_1,$
        \item $D_2 +\frac{1}{2} J_{n-r} B_2^{t}J_{\delta} B_2 ,$
        \item $ D_3 +\frac{1}{2} J_{n-r} B_1^{t}J_{\delta} B_1 ,$
        \item $D_4 +\frac{1}{2} J_{n-r} B_1^{t}J_{\delta} B_2 ,$
        \item $C_1 +\frac{1}{2} J_{n-r} B_2^{t}J_{\delta} A  ,$
        \item $C_2 +\frac{1}{2} J_{n-r} B_1^{t}J_{\delta} A $,
        \item $ A - B_2 J_{n-r} B_1^{t}J_{\delta}$,
        \item $Y+X^t$.
        \end{enumerate}
    \end{multicols}
    We observe that 
    \[
\mathfrak R \subset  \mathfrak I^{\rm naive}.
\]
Indeed, the first six relations (1)-(6) are implied from  the relation $X^{t} S_1 X - 2 \pi S X\in \mathfrak I^{\rm naive}$ (\ref{c3}), relation (7) from (\ref{c1}) and (\ref{c4}), and (8) from (\ref{c1}).

 Hence, the relations (1)-(8) express each block $A$, $D_i$, $C_j$ of $X$ and $Y$, in terms of $B_1$ and $B_2$, modulo 
$\mathfrak I^{\rm naive}$. It follows that $\br\O(U^{\rm naive})$ and, therefore, also $\br\O(U)$, is a quotient of $\br\O[B_1, B_2]$.
In fact, since $\mathfrak I^{\rm naive}$ contains all the $2\times 2$ minors of $X$, it follows that $\br\O(U^{\rm naive})$ and  $\br\O(U)$ are also quotients of $\br\O[B_1, B_2]/(\wedge^2[B_1|B_2])$. This last ring is the affine coordinate ring of the cone over the Segre embedding of $\bbP^{\delta-1}\times \bbP^{d-\delta-1}$ over $\br\O$ and so it is integral and flat of relative dimension $d-1$ over $\br\O$.

We now continue to uncover additional relations in $\mathfrak I^{\rm naive}$. 
As is observed in \cite[4.a]{Zachos}, the condition $\wedge^2 X=0$
easily gives
\begin{equation}\label{TrAB}
AB_1={\rm Tr}(A)B_1, \quad AB_2={\rm Tr}(A)B_2.
\end{equation}
By \cite[Lemma 4.4]{Zachos}, the relation $\wedge^2 X=0$ implies that $B_1^t J_{n-r} B_2$ is symmetric,
so, modulo $\mathfrak I^{\rm naive}$, we have
\[
B_1^t J_{n-r} B_2=B_2^t J_{n-r} B_1.
\]
By looking at the appropriate blocks, we see that the relations (\ref{c3})    imply
\[
A^t J_\delta B_1+2\pi J_\delta B_1\in \mathfrak I^{\rm naive}, \qquad A^t J_\delta B_2+2\pi J_\delta B_2 \in \mathfrak I^{\rm naive}.
\]
Now $A - B_2 J_{n-r} B_1^{t}J_{\delta} \in \mathfrak I^{\rm naive}$ (relation (7) above), 
so, modulo $\mathfrak I^{\rm naive}$,
\begin{eqnarray*}
A^t J_\delta B_1+2\pi J_\delta B_1&=&(B_2 J_{n-r} B_1^{t}J_{\delta})^t J_\delta B_1+2\pi J_\delta B_1\\
&=&J_\delta B_1 J_{n-r} B_2^t J_\delta B_1+2\pi J_\delta B_1\\
&=&J_\delta A B_1+2\pi J_\delta B_1\\
&=& J_\delta({\rm Tr}(A)+2\pi)B_1,
\end{eqnarray*}
where the last identity uses (\ref{TrAB}).
This implies that $({\rm Tr}(A)+2\pi)B_1\in \mathfrak I^{\rm naive}.$ Similarly, starting from 
\[
A^t J_\delta B_2+2\pi J_\delta B_2\in \mathfrak I^{\rm naive},
\]
we obtain $({\rm Tr}(A)+2\pi)B_2\in \mathfrak I^{\rm naive}.$ Since ${\rm T}(B_1|B_2)={\rm Tr}(A)$ modulo 
$\mathfrak I^{\rm naive}$, this gives that $\br\O(U^{\rm naive})$ is a quotient of
$\br\O[B_1, B_2]/(\wedge^2 [B_1|B_2], ({\rm T}(B_1|B_2)+2\pi)[B_1|B_2])$.

Observe that $B_1=B_2=0$ gives $X=Y=0$, which corresponds to $\calL=\bbL$. This point
does not belong to the generic fiber of the $\br\O$-flat $\Mloc$. Hence, we find that $\br\O(U)$ is a quotient
of $\calA=\br\O[B_1, B_2]/(\wedge^2 [B_1|B_2], {\rm T}(B_1|B_2)+2\pi)$. The ring $\calA$ 
is the coordinate ring of a hypersurface in the integral Cohen-Macaulay 
$\br\O[B_1, B_2]/(\wedge^2 [B_1|B_2])$;
we can easily see as in \cite[Section 5]{Zachos} that $\calA$ is integral of dimension $d-2$
and $\br\O$-flat. Since $U$ also shares both these properties and $\br\O(U)$ is a quotient 
of $\calA$, it follows that $\br\O(U)=\calA$, as we wanted. The result for $\br\O(U^{\rm naive})$ 
also quickly follows: Indeed, $\br\O(U^{\rm naive})$ is a quotient of 
\[
\calA_+:=\br\O[B_1, B_2]/(\wedge^2 [B_1|B_2], ({\rm T}(B_1|B_2)+2\pi)[B_1|B_2]).
\]
However, this quotient has to be large enough to also allow both algebras
$
\br\O(U)=\calA
$
and $\br\O=\O[B_1, B_2]/([B_1|B_2])$ to appear as quotients. The corresponding spectrum has to support a morphism from the
push-out of $U$ and $\Spec(\br\O)$, glued at the point $X=Y=\pi=0$. But, as in \S\ref{coro} above, this push-out is the spectrum of 
$\calA_+$,
so 
\[
\br\O(U^{\rm naive})=\calA_+=\br\O[B_1, B_2]/(\wedge^2 [B_1|B_2], ({\rm T}(B_1|B_2)+2\pi)[B_1|B_2]),
\]
as we wanted.

The case (II) of different parity is similar: The role of $A$ is now played by $A'$ and
the relation (7) above is replaced by
\[
A'-(B'_2 J_{n-r'} (B'_1)^{t}   +  \frac{1}{2} E'(E')^{t}) J_\delta \in \mathfrak I^{\rm naive}
\]
which gives ${\rm Tr}(A')={\rm T}(B'_1|E'|B_2')$ modulo $\mathfrak I^{\rm naive}$.
See also the proof of \cite[Theorem 3.3]{Zachos} for more details.
\end{proof}

 \subsection{The blow-up of the local model $\Mloc$}\label{ssBl}
 
Let 
\[
r^{\rm bl}: \Mbl(\La)\to \Mloc(\La)
\]
 be the blow-up of $\Mloc(\La)$ at the closed point $*$ of its special fiber that corresponds to $\calL=\bbL$.
 We will now show Theorem \ref{BUpIntro} of the introduction: 
 
 \begin{proof} By Theorem \ref{Thmexplicit}, it is enough to show 
 the conclusion of the theorem for the blow-up 
 $\wti {\mathscr D}_{\rm T}$ of ${\mathscr D}_{\rm T}$ at the (maximal) ideal given by $(z_{ij})$.
 For simplicity, we write $\mathscr D$ for the determinantal scheme ${\mathscr D}^2_{\delta\times (d-\delta)}$ over $\Spec(\br\O)$.
This is the affine cone over the Segre embedding
 \[
 (\bbP^{\delta-1}\times \bbP^{d-\delta-1})_{\br\O}\hookrightarrow \bbP^{\delta(d-\delta)-1}_{\br\O}.
 \]
 Also, we set
\[
{\rm T}=\frac{1}{2}\sum_{1\leq i\leq \delta, 1\leq j\leq d-\delta} z_{i\ d-\delta+1-j}\, z_{\delta+1-i\ j}.
\]
Let us consider the blow-up 
\[
\wti {\mathscr D} \tol {\mathscr D} 
\]
of the determinantal scheme over $\Spec(\br\O)$
along the vertex of the cone, i.e. along the subscheme defined by the ideal $(z_{ij})$. 
Then, the blow-up $\wti {\mathscr D}_{\rm T}$ is isomorphic to the strict transform of the hypersurface $\mathscr D_{\rm T}\subset \mathscr D$ given by
${\rm T}+2\pi=0$. By definition, $\wti {\mathscr D}$ is a closed subscheme of ${\mathscr D}\times_{\br\O} \bbP^{\delta(d-\delta)-1}_{\br\O}$ and in fact,
\[
\wti {\mathscr D}= \{(z_{ij}, u_{i,j})\ |\ (z_{ij}u_{i',j'}-z_{i'j'}u_{i,j}),\, \wedge^2(z_{ij})=0,\, \wedge^2(u_{i,j})=0\} \subset {\mathscr D}\times_{\br\O} \bbP^{\delta(d-\delta)-1}_{\br\O}
\]
where $(u_{i,j})$ are homogeneous coordinates on $\bbP^{\delta(d-\delta)-1}_{\br\O}$.
Let $V_{s, t}$ be the open affine  of $\wti {\mathscr D}$ over which the image of $z_{s t}$ generates the pull-back of the ideal $(z_{ij})$. 
Then 
\[
V_{s, t}\simeq \Spec(\br\O[(u_{i,j})_{1\leq i\leq \delta, 1\leq j\leq d-\delta}]/((u_{i,j}-u_{s, j} u_{i, t})_{i, j}, u_{s, t}-1).
\]
The intersection $V_{s, t}\cap \wti {\mathscr D}_{\rm T}$ is obtained by substituting $z_{ij}=u_{i,j}z_{st}$
and $u_{i,j}=u_{s, j}u_{i,t}$, for all $i$, $j$, in the equation ${\rm T}=-2\pi$. 
This amounts to setting
\[
z_{ij}=u_{s,j}u_{i,t}z_{st}
\]
and gives
\[
4\pi+z_{st}^2 (\sum_{1\leq i\leq \delta, 1\leq j\leq d-\delta } u_{s, d-\delta+1-j} u_{i, t} u_{s,j} u_{\delta+1-i, t})=0.
\]
This is
\begin{equation}\label{quadBU}
4\pi+z_{st}^2 (\sum_{i=1}^\delta u_{i,t}u_{\delta+1-i, t})(\sum_{j=1}^{d-\delta} u_{s,j} u_{s, d-\delta+1-j})=0.
\end{equation}
Note that, since $u_{s,t}=1$, the two sums in the line above are
\[
S_1=u_{\delta+1-s, t} +\sum_{i\neq s} u_{i,t}u_{\delta+1-i, t}\ ,\qquad S_2=u_{s, d-\delta+1-t} +\sum_{j\neq t} u_{s,j} u_{s, d-\delta+1-j}.
\]
(If $\delta_*=1$, i.e. $\delta=1$ or $d-\delta=1$, then one of the sums is equal to $u_{s, t}=1$.) 
If $\delta_*\geq 2$, we see that $u\mapsto z_{st}$, $x\mapsto -S_1/2$, $y\mapsto S_2/2$, defines a smooth morphism
\[
V_{s, t}\cap \wti {\mathscr D}_{\rm T}\tol \Spec(\br\O[u, x, y]/(u^2xy-\pi)).
\]
If $\delta_*=1$, we similarly 
obtain a smooth morphism to $\Spec(\br\O[u, x]/(u^2x-\pi))$.
\end{proof}

 \section{Resolution and the linked quadric}\label{s5}
 
 Here, we relate the blow-up $\Mbl(\La)$ of the local model with the linked quadric $\cQ(\La,\La^\vee)$ by introducing a third 
 auxiliary scheme $\calM(\La)$.
  
\subsection{A resolution via additional lines}\label{ss45}

We first define a scheme $\calM^{\rm naive}=\calM^{\rm naive}(\La)$ over $\Spec(\O)$ with $\calG$-action and a $\calG$-equivariant morphism  $r: \calM^{\rm naive}(\La)\tol {\rm M}^{\rm naive}(\La)$. 

We give $\calM^{\rm naive}(\La)$ as a functor on $\O$-algebras as follows. 

\begin{Definition} {\rm The functor $\calM^{\rm naive}(\La)$ associates to an $\O$-algebra $R$ the set of
triples $(W_+, W_-, \calL)$, where
\begin{itemize}
\item[1)]  $\calL$ is a finitely generated projective $R[u]$-module which gives a point of ${\rm M}^{\rm naive}(\La)$, i.e. satisfies conditions (1)-(6) of \S \ref{ss43}, and 

\item[2)] $W_+$, $W_-$, are two finitely generated projective $R[u]$-modules such that
  \[ 
	\xy
	(-21,7)*+{(u-\pi)\mathbb{L}_R \subset   W_+};
	(-4.5,3.5)*+{\rotatebox{-45}{$\,\, \subset\,\,$}};
	(-4.5,10.5)*+{\rotatebox{45}{$\,\, \subset\,\,$}};
	(0,14)*+{\mathbb{L}_R};
	(0,0)*+{\mathcal{L}_R};
	(4,3.5)*+{\rotatebox{45}{$\,\, \subset\,\,$}};
	(4,10.5)*+{\rotatebox{-45}{$\,\, \subset\,\,$}};
	(23.5,7)*+{W_{-} \subset   (u-\pi)^{-1}\mathbb{L}_R,};
	\endxy    
	\]
 with all the quotients   $\bbL_R/W_+$, $\calL/W_+$, $W_{-}/\bbL_R$, $W_{-}/\calL$, finitely generated projective $R$-modules of rank $1$.
 \end{itemize}}
 \end{Definition}
 
 We can see that the forgetful morphism $r: \calM^{\rm naive}(\La)\tol {\rm M}^{\rm naive}(\La)$ given by
 \[
 r(W_+, W_-, \calL)=\calL
 \]
 is representable by a projective morphism. Indeed, $\calM^{\rm naive}(\La)$ is naturally a closed subscheme of $\bbP(\La)\times \bbP(\La^\vee)\times {\rm M}^{\rm naive}(\La)$ and $r$ is given by the projection. Since ${\rm M}^{\rm naive}(\La)$ is projective by Proposition \ref{Mnaive}, $\calM^{\rm naive}(\La)$ is also represented by a projective scheme over $\Spec(\O)$.
 
 This map $r$ is an isomorphism over the open locus ${\rm M}^0(\La)\subset {\rm M}^{\rm naive}(\La)$ where $\calL\neq \bbL$. 
 Indeed, over ${\rm M}^0$ we have $W_+=\calL\cap \bbL$, $W_-=\calL+\bbL$, and the data $(W_+, W_-, \calL)$ are uniquely determined by $\calL$. 
  
 Set $\calM^0(\La)=r^{-1}({\rm M}^0(\La))$ which, by the above, is isomorphic to ${\rm M}^0(\La)$ via $r$.
 For the generic fibers we have 
 \[
 \calM^0(\La)\otimes_\O F={\rm M}^0(\La)\otimes_\O F=\cQ(V).
 \]
 
 \begin{Definition} {\rm
 The scheme $\calM=\calM(\La)$ is the (reduced) Zariski closure of the generic fiber $\cQ(V)=\calM^{0}(\La)\otimes_\O F$ in $\calM^{\rm naive}(\La)$. }
 \end{Definition}
 
 The restriction of the morphism $r$ to $\calM(\La)$ factors through $\Mloc(\La)\subset {\rm M}^{\rm naive}(\La)$ and gives a projective birational $\calG$-equivariant morphism
\[
r: \calM(\La)\tol \Mloc(\La).
\]
We can identify $\calM(\La)$ with both the blow-up $\Mbl(\La)$ of the local model  and the blow-up $\wti \cQ(\La, \La^\vee)$ of the linked quadric $\cQ(\La, \La^\vee)$:
 
 \begin{thm}\label{mainthm}
  There are $\calG$-equivariant isomorphisms $\Mbl(\La)\simeq \calM(\La)\simeq \wti \cQ(\La, \La^\vee)$. 
 \end{thm}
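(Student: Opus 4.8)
The plan is to use the auxiliary scheme $\calM(\La)$ as a bridge between the two blow‑ups and to check everything on the explicit affine chart $U\subset \Mloc(\La)$ of Theorem~\ref{Thmexplicit}. Since $U$ contains the worst point $*$ and meets every $\calG$‑orbit, the $\calG$‑translates of $U$ cover $\Mloc(\La)$; moreover the ideal sheaf $\mathcal{I}_{*}$ is $\calG$‑equivariant because $*$ is $\calG$‑fixed, and both $r\colon \calM(\La)\to \Mloc(\La)$ and $r^{\rm bl}\colon \Mbl(\La)\to \Mloc(\La)$ are isomorphisms over $\Mloc(\La)\smallsetminus\{*\}$. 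So for each of the two isomorphisms it suffices to understand the situation over $U$ and then invoke the universal property of the blow‑up together with a comparison of reduced projective birational $\calG$‑schemes. The two identifications are intertwined, and the cleanest route carries them out on the same system of explicit charts.

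\emph{Step 1: $\calM(\La)\simeq \Mbl(\La)$.} By Theorem~\ref{Za1}, over $U$ a point of ${\rm M}^{\rm naive}(\La)$ is encoded by the matrix $X$ representing $\Phi$ with $\wedge^{2}X=0$, and $U\cong \mathscr D_{\rm T}\subset \mathscr D:=\mathscr D^{2}_{\delta\times(d-\delta)}$; here $Y=-X^{t}$ represents $\Psi$. Unwinding the definition of $\calM^{\rm naive}(\La)$, the extra data $W_{+}$, $W_{-}$ over such a point amounts, after the identifications $(u-\pi)^{-1}\bbL_{R}/\bbL_{R}\cong \La_{R}$, $\bbL_{R}/(u-\pi)\bbL_{R}\cong \La_{R}$ and the perfect pairing $\La_{R}\times\La^{\vee}_{R}\to R$, to a linked pair of lines $(L,L')$ with $L\subset\La_{R}$ containing the column span of $X$ and $L'\subset\La^{\vee}_{R}$ containing the row span of $X$. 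As $\calM(\La)$ is the reduced closure of the generic fibre, where $X\neq 0$ forces $L=[\operatorname{col} X]$, $L'=[\operatorname{row} X]$, the preimage $r^{-1}(U)$ is the reduced closure of $\{(X,[\operatorname{col} X],[\operatorname{row} X]):X\in\mathscr D_{\rm T},\ X\neq 0\}$ in $\mathscr D_{\rm T}\times\bbP^{\delta-1}\times\bbP^{d-\delta-1}$, i.e. the strict transform $\wti{\mathscr D}_{\rm T}$ of $\mathscr D_{\rm T}$ in the blow‑up $\wti{\mathscr D}\to \mathscr D$ of the Segre cone along its vertex. By the computation in the proof of Theorem~\ref{BUpIntro}, $\wti{\mathscr D}_{\rm T}$ is exactly $(r^{\rm bl})^{-1}(U)$; in particular $r^{-1}\mathcal{I}_{*}\cdot\mathcal O_{\calM(\La)}$ is invertible (on $r^{-1}(U)$ it is generated, Zariski‑locally, by a single coordinate $z_{st}$). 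The universal property of the blow‑up then gives a $\calG$‑equivariant morphism $\calM(\La)\to\Mbl(\La)$ over $\Mloc(\La)$; it is an isomorphism over $r^{-1}(U)$ and over $\Mloc(\La)\smallsetminus\{*\}$, hence an isomorphism since both sides are reduced, projective and birational over $\Mloc(\La)$. (The case $\delta_{*}=1$ is the simpler one already handled in the proof of Theorem~\ref{BUpIntro}.)

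\emph{Step 2: $\calM(\La)\simeq \wti\cQ(\La,\La^\vee)$.} The linked pair $(L,L')$ attached above to $(W_{+},W_{-},\calL)$ automatically satisfies $\langle L,L'\rangle_{R}=0$, because $W_{+}$ and $W_{-}$ are mutually dual for the pairing carried by $\calL$ and $\calL^{\vee}$; by Theorem~\ref{thmQ}(A) this defines a $\calG$‑equivariant morphism $\rho\colon \calM(\La)\to \cQ(\La,\La^\vee)$, the identity on the common generic fibre $\cQ(V)$. To identify $\rho$ with the blow‑up $\beta_{1}\colon\wti\cQ(\La,\La^\vee)\to\cQ(\La,\La^\vee)$ of Proposition~\ref{prop331}, one works over $\cQ^{0}(\La,\La^\vee)$, covered by the charts $\calV=\calV_{ij}$ carrying the smooth morphism $f\colon \calV\to \calB=\Spec(\br\O[u,v,S,T]/(uv-\pi,\ uS+vT))$, $S\mapsto Q_{1}$, $T\mapsto Q_{2}$: the blow‑up of $\calV$ along $Z_{1}\cap\calV=(Q_{1},u)$ is $\calV\times_{\calB}\wti{\calB}$, and one checks that the extra coordinate of $\wti{\calB}$ in the proof of Proposition~\ref{prop331} (the ratio $S/v=-T/u$) is precisely the slope recording the line $L$ produced by $W_{-}$. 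Thus over $\cQ^{0}(\La,\La^\vee)$ the morphism $\rho$ is the blow‑up of Proposition~\ref{prop331}, and — patching with the behaviour over $\cQ(\La,\La^\vee)\smallsetminus\cQ^{0}(\La,\La^\vee)$, where $\cQ(\La,\La^\vee)$ is smooth and $Z_{1}$ is Cartier so that $\rho$ and $\beta_{1}$ are both isomorphisms — the same comparison as in Step~1 yields a $\calG$‑equivariant isomorphism $\calM(\La)\simeq\wti\cQ(\La,\La^\vee)$, under which $\rho$ becomes the morphism $\rho$ of (\ref{eqDiagramIntro}). Combining with Step~1 proves the theorem.

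\emph{The main obstacle} is the coordinate bookkeeping underlying Step~1: identifying, in the presentation of Theorem~\ref{Za1}, exactly which line is cut out by $W_{+}$ and which by $W_{-}$ in terms of the column and row directions of $X$, and then verifying that the reduced closure defining $\calM(\La)$ is neither larger nor smaller than the strict transform $\wti{\mathscr D}_{\rm T}$ — equivalently, that the fibre over $*$ is the \emph{full} product $\bbP^{\delta-1}\times\bbP^{d-\delta-1}$ rather than a single projective factor. The analogous matching in Step~2 of the extra blow‑up coordinate of $\wti\cQ(\La,\La^\vee)$ with the line $L$ coming from $W_{-}$ is of exactly the same nature, which is why it is best to establish both identifications simultaneously on the shared explicit charts.
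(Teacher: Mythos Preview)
Your overall strategy is the same as the paper's: use $\calM(\La)$ as a bridge, define $\rho$ via the pair $(\bar W_-, \bar W_+^\vee)$, and verify the two factorizations through the blow-ups on the explicit chart $U$ (then propagate by $\calG$-equivariance). What is missing is exactly the step you flag as ``the main obstacle'', and without it the argument does not close.

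Concretely, in Step~1 you assert that $r^{-1}(U)$ is the reduced closure of $\{(X,[\mathrm{col}\,X],[\mathrm{row}\,X])\}$ inside $\mathscr D_{\rm T}\times\bbP^{\delta-1}\times\bbP^{d-\delta-1}$ and that this closure equals $\wti{\mathscr D}_{\rm T}$. But a priori the lines $(L,L')=(\bar W_-,\bar W_+^\vee)$ live in $\bbP(\La)\times\bbP(\La^\vee)\cong\bbP^{d-1}\times\bbP^{d-1}$, not in the Segre factors $\bbP^{\delta-1}\times\bbP^{d-\delta-1}$. The passage from one to the other, and the verification that the flat closure over $*$ is exactly $\bbP^{\delta-1}\times\bbP^{d-\delta-1}$ (neither larger nor smaller), requires actually computing the equations of the charts $\wti U_{s,t}\subset\calM$. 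The paper does this in Proposition~\ref{affineChart}: writing $X=\und x^t\cdot\und\lambda$, $Y=\und y^t\cdot\und\mu$, and working through the block relations from the proof of Theorem~\ref{Za1}, one finds the explicit ideal $\mathfrak K_{s,t}$ together with the single equation $\lambda_t^2\,Q_2(\und x_2)\,Q_1(\und y_1)+\pi=0$. Only after this does one see, by eliminating the redundant variables, that $\wti U_{s,t}$ matches the chart $V_{s,t}\cap\wti{\mathscr D}_{\rm T}$ of \S\ref{ssBl}.

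Your Step~2 has a related gap. The assertion that the extra blow-up coordinate of $\wti\calB$ (the ratio ``$S/v=-T/u$'') is ``the slope recording the line $L$'' is not correct as stated: $L\in\bbP^{d-1}$ is not a single ratio. What the paper actually proves (again using Proposition~\ref{affineChart}) is the identity $u=-Q_2(\und x_2)\lambda_t$ for the linking multiplier on $\wti U_{s,t}$, i.e.\ equation~(\ref{l.con}); this shows directly that $\rho^*(u,Q_2(\und x_2))=(Q_2(\und x_2))$ is principal, so $\rho$ factors through $\wti\cQ(\La,\La^\vee)$, and then a comparison of regular birational schemes (Zariski's main theorem plus a bijection on $\bar k$-points) finishes. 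In short: your outline is the paper's, but the substantive content---the computation behind Proposition~\ref{affineChart}---is precisely what you have left out.
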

 
 In fact, we obtain a diagram of $\calG$-equivariant birational projective morphisms
 \begin{equation}\label{eqDiagram}
\begin{tikzcd}
&\Mbl(\La) \arrow[dl, "r^{\rm bl}"'] \arrow[r, "\sim"]&\calM(\La) \arrow[dr, "\beta"]\arrow[dll, "r"] &\\
\Mloc(\La) &&&  \cQ(\La, \La^\vee).
\end{tikzcd}
\end{equation}
and the isomorphism $\Mbl(\La)\simeq \calM(\La)$ makes the diagram commute.

 We now give the proof of Theorem \ref{mainthm}.

 \subsection{Comparing resolutions}\label{ss51}
 
We first define a morphism 
 \[
 \rho: \calM(\La)\tol \cQ(\La, \La^\vee)
 \]
 which we then show identifies $\rho$ with the blow-up 
 \[
 \beta: \wti \cQ(\La, \La^\vee)=\wti \cQ_i(\La, \La^\vee)\tol \cQ(\La, \La^\vee).
 \] 
 
  Let us consider an $R$-point of $\calM(\La)$ given by $(W_+, W_-, \calL)$.
 Taking duals gives
 \[ 
	\xy
	(-21,7)*+{(u-\pi)\bbL^\vee_R \subset   W_{-}^\vee};
	(-4.5,3.5)*+{\rotatebox{-45}{$\,\, \subset\,\,$}};
	(-4.5,10.5)*+{\rotatebox{45}{$\,\, \subset\,\,$}};
	(0,14)*+{\bbL^\vee_R};
	(0,0)*+{\calL^\vee};
	(4,3.5)*+{\rotatebox{45}{$\,\, \subset\,\,$}};
	(4,10.5)*+{\rotatebox{-45}{$\,\, \subset\,\,$}};
	(23.5,7)*+{W_{+}^\vee \subset   (u-\pi)^{-1}\bbL^\vee_R.}
	\endxy    
	\]
 We now set
 \[
  (0)\subset \bar W_+\buildrel{1}\over\subset \bbL_R/(u-\pi)\bbL_R=\Lambda_R, 
  \qquad (0)\buildrel{1}\over\subset \bar W_{-}\subset (u-\pi)^{-1}\bbL_R/\bbL_R=\Lambda_R,
  \]
\[
  (0)\subset \bar W_{-}^\vee \buildrel{1}\over\subset \bbL^\vee_R/(u-\pi)\bbL^\vee_R=\Lambda^\vee_R, \qquad (0)\buildrel{1}\over\subset \bar W_{+}^\vee\subset (u-\pi)^{-1}\bbL^\vee_R/\bbL^\vee_R=\Lambda^\vee_R,
  \]
where we denote by bar the   images of the submodules $W_+$, $W_-$, $W_+^\vee$, $W_-^\vee$, in the corresponding quotients.

There is a morphism 
\[
\rho: \calM(\La)\tol \bbP(\La)\times_\O \bbP(\Lambda^\vee)
\]
given by $(W_+, W_-, \calL)\mapsto (\bar W_{-},  \bar W_{+}^\vee)$. Recall that the generic fiber $\calM(\La)\otimes_\O{F}$ is isomorphic 
to the quadric of isotropic lines in the quadratic space $V$. Since $\calM$ is by definition, flat over $\Spec(\O)$, 
and $\cQ(\La, \La^\vee)$ is, also by definition, the flat closure of the same quadric in $\cQ(\La)\times_\O \cQ(\La^\vee)\subset \bbP(\La)\times_\O \bbP(\La^\vee)$, the morphism $\rho$ factors through $\cQ(\La, \La^\vee)$ to give
\[
\rho: \calM(\La)\tol \cQ(\La, \La^\vee).
\]
We will use $\rho$ to identify $\calM(\La)$ with the blow-up $\wti \cQ(\La,\La^\vee)$ of $\cQ(\La, \La^\vee)$.

\subsection{Proof of the main comparison}\label{ss52}

We continue with the proof of Theorem \ref{mainthm}. For simplicity, we will omit $\La$ from some of the 
notation and write $\calM$, $\Mloc$, etc.
We have already given morphisms $r: \calM\to \Mloc$ and $\rho:\calM\to \cQ(\La,\La^\vee)$.
We would like to show that these induce identifications with the blow-up $r^{\rm bl}: \Mbl\to \Mloc$.
Using the universal property of the blow-up, we see that it is enough to
prove this statement after base changing to $\O_{F'}$, where $F'/F$ is an unramified extension, or even to $\br \O$. This allows us to assume that we are in one of the $4$ cases listed in \S \ref{ss42a}. To ease up on the notation
we will omit this base change in what follows.

Recall that $\calM$ is a closed subscheme of
$\bbP(\Lambda)\times\bbP(\Lambda^\vee)\times \Mloc$. This can also be seen as follows: Start with
\[
(x_1;\ldots ; x_d)\times (y_1;\ldots ; y_d) \in \bbP(\Lambda)\times \bbP(\Lambda^\vee).
\]
Take $W=W_+$ by giving $\bar W_+\subset \Lambda_R$ to be the perpendicular of the line $(y_1;\ldots ; y_d)\in \bbP(\Lambda^\vee)$
under the perfect pairing $\Lambda_R\times \Lambda^\vee_R\to R$. Similarly, we take $W'=W_{-}^\vee$ by giving $\bar W'\subset \Lambda^\vee_R$ to be the perpendicular of the line $(x_1;\ldots ; x_d)\in \bbP(\Lambda)$
under the perfect pairing $\Lambda_R\times \Lambda^\vee_R\to R$. 
Set $e_i^\vee$ for the dual basis of $e_i\in \La$ so that
\[
\lan e_i, e^\vee_j\ran=\delta_{ij}.
\]
Then we have
\[
\bar W_+^\vee=(\sum_i y_i e_i^\vee)\subset \Lambda^\vee_R,\qquad
\bar W_{-}=(\sum_i x_i e_i)\subset \Lambda_R.
\]
The pair $(W_+, W_-)$ is part of a triple $(W_+, W_-, \calL)$ that corresponds to a point of $\calM^{\rm naive}$, when there is $\calL\in {\rm M}^{\rm naive}(R)$, such that
\begin{itemize}
\item[1)] the image of $\calL$ in $(u-\pi)^{-1}\bbL_R/\bbL_R=\Lambda_R$, i.e. the image of $\Phi$, is contained
in $\bar W_{-}=(\sum_i x_i e_i)$, and
\item[2)] the image of $\calL^\vee$ in $(u-\pi)^{-1}\bbL^\vee_R/\bbL^\vee_R=\Lambda^\vee_R$
is contained
in $\bar W^\vee_{+}=(\sum_i y_i e_i^\vee)$. 
\end{itemize}

Let us now consider the inverse image $\wti U:=r^{-1}(U)$ under
\[
r: \calM\tol \Mloc.
\] 
Recall $U=U^{\rm naive}\cap \Mloc$.
Over the affine chart $U^{\rm naive}$, $\Phi$ is given by the matrix $X$.
Hence,
 condition (1) translates to
\begin{equation}\label{mat.rel1}
X=(x_1,\ldots, x_d)^t\cdot (\lambda_1,\ldots , \lambda_d)=\left( \lambda_1\underline x^t |\cdots |\lambda_d\underline x^t\right)
= \begin{pmatrix} 
         \lambda_{1}x_1  & \dots & \lambda_{d} x_1\\
         \vdots & \ddots & \vdots   \\
           \lambda_{1} x_{d} & \dots & \lambda_{d} x_{d}   \\
    \end{pmatrix} 
\end{equation}
for some $(\lambda_1,\ldots , \lambda_d)\in R^d$.
Similarly,   condition (2) translates to
\begin{equation}\label{mat.rel1b}
Y=(y_1,\ldots, y_d)^t\cdot (\mu_1,\ldots , \mu_d)=\left( \mu_1\underline y^t |\cdots |\mu_d\underline y^t\right)
= \begin{pmatrix} 
         \mu_{1}y_1  & \dots & \mu_{d} y_1\\
         \vdots & \ddots & \vdots   \\
           \mu_{1} y_{d} & \dots & \mu_{d} y_{d}   \\
    \end{pmatrix}
\end{equation}
for some $(\mu_1,\ldots , \mu_d)\in R^d$. 

To continue with our calculation, it is convenient to recall the decompositions $\La=M\oplus N$, $\La^\vee=M\oplus \pi^{-1}N$ as in \S \ref{ss42a} and the bases of $M$, $N$ 
listed there. Recall that we assume that we are in one of the $4$ cases of \S \ref{ss42a}. We define $\Delta$, resp. $\Delta^c$, to be the set of $1\leq i\leq d$ with $e_i\in N$, resp. $e_i\in M$, where $\{e_i\}$ is the basis listed there. Then $\{1,\ldots , d\}=\Delta^c\sqcup \Delta$ and   $\#\Delta=\delta$, $\#\Delta^c=d-\delta$.  For $\und x=(x_i)_{1\leq i\leq d}\in \bbA(\La)$ we write
\[
\und x=\und x_{1}+\und x_{2}, 
\]
  where $ \und x_1\in \bbA(M)$, $ \und x_2\in \bbA(N)$. Let $\und w=(w_j)_{j\in \Delta^c}$ be a point of 
$\bbA(M)$, resp. $\und z=(z_i)_{i\in \Delta}$ a point of $\bbA(N)$. As in the proof of Theorem \ref{thmQ}, we set 
\[
Q_1(\und w) =\frac{\lan \und w, \und w\ran}{2},\qquad
Q_2(\und z)=\frac{\lan \und z, \und z\ran}{2\pi}.
\]

Let $\wti U_{s,t}$, where $x_s=1$ and $y_t=1$, be the affine patches that cover $\wti U$. Using the equation $Y+X^t=0$ we obtain the following relations:
\begin{equation}\label{lam.eq1}
  \lambda_t = - \mu_{s}, \quad  \lambda_i = \lambda_t y_i, \quad \text{and} \quad \mu_j = -\lambda_t x_j \quad \text{for} \quad 1 \leq i,j \leq d.
  \end{equation}

We can now determine $\wti U_{s, t}$.

 \begin{prop}\label{affineChart}
  We have
\begin{equation}\label{eqUstA}
\wti U_{s, t} \simeq  \Spec\left(\frac{\O[\lambda_t, (x_i)_{1 \leq i \leq d},(y_j)_{1 \leq j \leq d}]}{( \lambda^2_tQ_2(\und x_2) Q_1( \und y_1 )+ \pi)+\mathfrak{K}_{s, t}} \right)
\end{equation}
where  
\[
\mathfrak{K}_{s, t} =  \Bigg( x_s-1, \    y_t-1, \
(x_{i}+\lambda_t {Q_2(\und x_2)}y_{d+1-i} )_{i \in \Delta^c},  \  (y_j -  \lambda_t {Q_1( \und y_1 )}x_{d+1-j} )_{j \in \Delta}  \Bigg).
\]
\end{prop}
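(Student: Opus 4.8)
The strategy is to make fully explicit the closed immersion $\wti U_{s,t}\hookrightarrow \bbP(\La)\times\bbP(\La^\vee)\times U^{\rm naive}$, eliminate variables using the equations already collected, and finally intersect with the closure $\Mloc\subset {\rm M}^{\rm naive}$ by imposing the single extra equation ${\rm T}(Z)+2\pi=0$ from Theorem \ref{Za1}. Concretely: on $\wti U_{s,t}$ we have $x_s=1$, $y_t=1$, the matrix $X$ is the rank-one matrix \eqref{mat.rel1} and $Y$ is \eqref{mat.rel1b}, and the coordinates $\lambda_i$, $\mu_j$ are all expressed through $\lambda_t$ and the $x_i$, $y_j$ by \eqref{lam.eq1}. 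So the coordinate ring of $\wti U_{s,t}$ is a quotient of $\O[\lambda_t,(x_i),(y_j)]$, and it remains to identify exactly which relations survive.

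First I would feed the rank-one form $X=(x_i)^t(\lambda_j)$, together with $\lambda_i=\lambda_t y_i$, into the defining equations \eqref{c3} of $\mathcal L\subset\mathcal L^\vee$, namely $X^tS_1X-2\pi SX=0$ and $X^tS_2X+2SX=0$. Using the block shape of $S=S_1+\pi S_2$ from \S\ref{ss42a} (the antidiagonal $J$-blocks), one computes $X^tS_1X$ and $X^tS_2X$ in terms of the quadratic forms $Q_1(\und y_1)=\lan\und y_1,\und y_1\ran/2$ on the $M$-part and $Q_2(\und x_2)=\lan\und x_2,\und x_2\ran/(2\pi)$ on the $N$-part — this is the same bookkeeping as in the proof of Theorem \ref{thmQ}, where the two isotropy conditions collapse, in the flat closure, to a single bilinear relation. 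The upshot is that the $M$-components ($i\in\Delta^c$) of the relation yield $x_i=-\lambda_t Q_2(\und x_2)\,y_{d+1-i}$ and the $N$-components ($j\in\Delta$) yield $y_j=\lambda_t Q_1(\und y_1)\,x_{d+1-j}$; these are precisely the generators of $\mathfrak K_{s,t}$ beyond $x_s-1,\ y_t-1$. The dual relations \eqref{c4} for $\mathcal L^\vee\subset u^{-1}\mathcal L$ give nothing new once $Y=-X^t$. The remaining defining datum of $\Mloc$ (as opposed to ${\rm M}^{\rm naive}$) is the equation ${\rm T}(Z)+2\pi=0$; substituting the eliminated coordinates and using the explicit formula ${\rm T}(Z)=\tfrac12\sum z_{i,d-\delta+1-j}z_{\delta+1-i,j}$ from \S\ref{coro} turns this into $\lambda_t^2\,Q_2(\und x_2)\,Q_1(\und y_1)+\pi=0$, which is the displayed equation \eqref{eqUstA}. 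I would double-check the sign and the factor by tracking that ${\rm T}(Z)$ is, up to the substitutions, $\mathrm{Tr}(A)$ (resp. $\mathrm{Tr}(A')$) and that $\mathrm{Tr}(A)=\lambda_t^2 Q_1(\und y_1)Q_2(\und x_2)$ off the vertex, consistent with $({\rm Tr}(A)+2\pi)B_i\in\mathfrak I^{\rm naive}$ from the proof of Theorem \ref{Za1}.

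One then has to argue that \emph{no further} relations hold, i.e. that the surjection from the right-hand ring of \eqref{eqUstA} onto $\O(\wti U_{s,t})$ is an isomorphism. I would do this by a flatness-and-dimension comparison, exactly in the spirit of the proof of Theorem \ref{Za1}: the explicitly presented ring is $\O$-flat of relative dimension $d-2$ (it is a hypersurface cut out by one equation in an affine space after the linear eliminations, and the equation is not a zero-divisor since its reduction mod $\pi$ is a nonzero, in fact reduced-up-to-components, polynomial), while $\wti U_{s,t}$, being open in $\calM$ which is by definition the flat closure of the smooth $(d-2)$-dimensional quadric $\cQ(V)$, is also $\O$-flat of relative dimension $d-2$ and generically reduced; a surjection of $\O$-flat rings of the same pure relative dimension whose source is moreover reduced (or a domain on each chart) must be an isomorphism. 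Handling case (II) of \S\ref{ss44} (opposite parity of $d$ and $\delta$) is a routine variant: $A$ is replaced by $A'$, the trace identity by $\mathrm{Tr}(A')={\rm T}(B_1'|E'|B_2')$, and the same quadratic relation emerges with $Z=[B_1'|E'|B_2']$.

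**Main obstacle.** The bulk of the work — and the step most likely to hide a sign error or an off-by-$\pi$ — is the block computation showing that \eqref{c3}, under the rank-one substitution for $X$ and with $Y=-X^t$, forces exactly the generators of $\mathfrak K_{s,t}$ and the quadratic relation $\lambda_t^2 Q_2(\und x_2)Q_1(\und y_1)+\pi=0$, uniformly across the four normal-form cases of \S\ref{ss42a}. Getting the two sums to factor as $Q_1(\und y_1)\cdot Q_2(\und x_2)$ (rather than some mixed quadratic) relies crucially on $X$ being \emph{rank one}, which is why the resolution $\calM$ — where we have imposed $\mathrm{Im}(\Phi)\subset\bar W_-$ — behaves so much better than $\Mloc$ itself; I would want to verify this factorization carefully on one even and one odd case and then invoke the symmetry of the normal forms for the rest.
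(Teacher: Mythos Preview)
Your overall strategy matches the paper's: substitute the rank-one expression $X=\und x^t\und\lambda$, $\und\lambda=\lambda_t\und y$, into the equations for $U$ from Theorem \ref{Za1}, extract the generators of $\mathfrak K_{s,t}$ and the quadratic relation, then finish by a flatness-and-dimension comparison. The paper executes this block-by-block, using the relations (1)--(8) of $\mathfrak R$ from the proof of Theorem \ref{Za1} (which package the entries of \eqref{c3} together with one consequence of \eqref{c4}).

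There is, however, a genuine gap in the order you propose, and it is precisely the ``off-by-$\pi$'' you anticipated. Writing out $X^tS_2X+2SX=0$ with $X=\lambda_t\,\und x^t\und y$, the rows indexed by $i\in\Delta^c$ do give $x_{d+1-i}+\lambda_tQ_2(\und x_2)y_i=0$ directly. But the rows with $i\in\Delta$ give $\pi x_{d+1-i}+\lambda_t Q_2(\und x_2) y_i=0$, which is \emph{not} the relation $y_j-\lambda_tQ_1(\und y_1)x_{d+1-j}=0$. To extract the latter from \eqref{c3} alone you must first establish the quadratic $\lambda_t^2Q_2(\und x_2)Q_1(\und y_1)+\pi=0$, substitute it for $\pi$, and then cancel the factor $\lambda_tQ_2(\und x_2)$; this cancellation is only legitimate because $\lambda_tQ_2(\und x_2)$ is a non-zero-divisor in the coordinate ring of the $\O$-flat $\wti U_{s,t}$ (a consequence of the quadratic equation itself). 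So the order must be: quadratic first, then the $\Delta$-relations.

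The paper takes an equivalent but slightly different route: it derives the $\Delta$-relations from the identity $A=B_2J_{n-r}B_1^tJ_\delta$ (item (7) of $\mathfrak R$, which comes from \eqref{c1} \emph{and} \eqref{c4}), again followed by non-zero-divisor cancellations, first of $\lambda_t$, then of $(x_i)_{i\in\Delta}$ via $Q_2(\und x_2)$. In either approach, your dismissal of \eqref{c4} as ``giving nothing new once $Y=-X^t$'' is too quick: with $Y=-X^t$ the relation \eqref{c4} becomes $XS_1X^t-2(S_2+\pi S_1)X^t=0$, which is not formally a consequence of \eqref{c3}. Either you use it (as the paper does), or you must make the non-zero-divisor argument explicit. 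That cancellation step is the one substantive ingredient missing from your plan; the rest is correct.
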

 
 Before we give the proof, we note that this implies that the charts $\wti U_{s, t}$ with $s\in\Delta$, $t\in \Delta^c$,
 cover $\wti U$. Indeed, if $x_i=0$ on $\wti U_{s, t}$, for all $i\in \Delta$, then also $Q_2(\und x_2)=0$. Hence, since
 $x_{i}+\lambda_t {Q_2(\und x_2)}y_{d+1-i}\in \mathfrak{K}_{s, t}$, we obtain $x_i\in \mathfrak{K}_{s, t}$ for all $i\in \Delta^c$ also, which is a contradiction. 
 Hence, we also have $x_i\neq 0$ for some $i\in \Delta$.
 A similar argument gives $y_j\neq 0$ for some $j\in \Delta^c$.
 If $s\in \Delta$ and $t\in \Delta^c$, then by eliminating $x_i$, for $i\in \Delta^c$, and $y_j$, for $j\in \Delta$, using the relations given by the generators of $\mathfrak{K}_{s, t}$, we obtain
 \begin{equation}\label{eqUst}
\wti U_{s, t} \simeq \Spec \left(\frac{\O[\lambda_t, (x_i)_{i\in \Delta},(y_j)_{j\in \Delta^c}]}{ (\lambda^2_tQ_2(\und x_2) Q_1( \und y_1 )+ \pi, x_s-1, y_t-1)} \right).
\end{equation}

\begin{proof}
 We will assume $d$ and $\delta$ have the same parity and leave the very similar (but notationally more involved)
 case of different parity to the reader. Set $l=(d-\delta)/2=n-r$. Recall the decomposition 
 \[
 X=(x_1,\ldots, x_d)^t\cdot (\lambda_1,\ldots , \lambda_d)= \begin{pmatrix} 
         \lambda_{1}x_1  & \dots & \lambda_{d} x_1\\
         \vdots & \ddots & \vdots   \\
           \lambda_{1} x_{d} & \dots & \lambda_{d} x_{d}   \\
    \end{pmatrix} 
=\left[\ 
\begin{matrix}[c|c|c]
D_1 & C_1 & D_2 \\ \hline
B_1 & A & B_2 \\ \hline
D_3 & C_2 & D_4
\end{matrix}\ \right].
 \]
 The block decomposition corresponds to separating a vector $\und x$ into $3$ parts
 \[
 \und x=\und x_1\oplus \und x_2=\und x_{1-}\oplus \und x_2\oplus \und x_{1+}\ ,
 \]
 in spaces of dimension $l$, $\delta$, $l$, respectively,
 with
 \[
\und  x_{1-}=(x_1,\ldots, x_l),\quad \und x_2=(x_{l+1},\ldots , x_{l+\delta}),\quad \und x_{1+}=(x_{l+\delta+1},\ldots , x_d).
 \]
 Denote by $\und x_2^*$, resp. $\und x_{1\pm}^*$,  the result of reversing the order of the coordinates in the vector $\und x_2$, resp. $\und x_{1\pm}$.  We have
 \[
 A=\und x_2^t\cdot \und\lambda_2=\begin{pmatrix} 
         \lambda_{l+1}x_{l+1}  & \dots & \lambda_{l+\delta} x_{l+1}\\
         \vdots & \ddots & \vdots   \\
           \lambda_{l+1} x_{l+\delta} & \dots & \lambda_{l+\delta} x_{l+\delta}
           \end{pmatrix} ,
 \]
 \[
 B_1 =  \und x_2^t\cdot \und \lambda_{1-}=\begin{pmatrix} 
         \lambda_{1}x_{l+1}  & \dots & \lambda_{l}x_{l+1} \\
         \vdots & \ddots & \vdots   \\
          \ \lambda_{1} x_{l+\delta}  & \dots & \lambda_{l} x_{l+\delta} \\
         
    \end{pmatrix} , \ \quad 
    B_2= \und x_2^t\cdot \und\lambda_{1+}=\begin{pmatrix} 
         \lambda_{l+\delta+1}x_{l+1}  & \dots & \lambda_{d}x_{l+1} \\
         \vdots & \ddots & \vdots   \\
          \ \lambda_{l+\delta+1} x_{l+\delta}  & \dots & \lambda_{d} x_{l+\delta}, \\
    \end{pmatrix}.
\]
Similarly, $D_1=\und x_{1-}^t\cdot\und\lambda_{1-}$, etc.
We have
\begin{eqnarray}\label{eqbbj}
B_2 J_{l} B_1^{t}J_{\delta}&=& \und x_2^t\cdot \und\lambda_{1+}\cdot J_l \cdot (\und x_2^t\cdot \und \lambda_{1-})^t\cdot J_\delta \ \ \\
 &=&
\und x_2^t\cdot (\und\lambda_{1+}\cdot J_l \cdot\lambda_{1-}^t)\cdot \und x_2\cdot J_\delta \nonumber \\
&=&\und x_2^t \cdot \und x_2^* \cdot Q_1(\und \lambda_1),\nonumber
\end{eqnarray}
since $Q_1(\und \lambda_1)=\und\lambda_{1+}\cdot J_l \cdot\lambda_{1-}^t$ and $\und x_2\cdot J_\delta=\und x_2^*$.
The relation ${\rm Tr}(B_2 J_{l} B_1^{t}J_{\delta}) + 2 \pi=0$, that holds over $U$ by Theorem \ref{Za1}, translates to
\begin{equation}\label{q1q2}
Q_2(\und x_2)\cdot  {Q_1(\und \lambda_1)}+\pi=0.
\end{equation}
 Using the relations (\ref{lam.eq1}) we obtain:
\begin{equation}\label{equation1}
    \lambda^2_t Q_1(\und y_1) Q_2(\und x_2) +\pi=0.
\end{equation}
Notice that this last equation implies that $\lambda_t$ is not a zero divisor in the coordinate ring of the $\O$-flat scheme
$\wti U_{s, t}$. Since, by (\ref{lam.eq1}), $\lambda_i=\lambda_t y_i$, the relation $A= B_2 J_{n-r} B_1^{t}J_{\delta}$,
and (\ref{eqbbj}) gives
\[
\lambda_t\cdot \und x_2^t\cdot \und y_2=\und x_2^t \cdot \und x_2^* \cdot \lambda_t^2 Q_1(\und y_1).
\]
 Since $\lambda_t$ is not a zero divisor, we obtain
 \begin{equation*}
 \und x_2^t\cdot \und y_2= \und x_2^t \cdot \und x_2^* \cdot \lambda_t  Q_1(\und y_1)
 \end{equation*}
 or, after taking transpose,
 \begin{equation}\label{equation2}
  (\und y_2-    \lambda_t  Q_1(\und y_1)\cdot \und x_2^*)^t\cdot \und x_2=0.
 \end{equation}
 By a similar calculation as in the proof of (\ref{eqbbj}), we obtain 
\[
\frac{1}{2} J_{l} B_2^{t}J_{\delta} B_1 =(\und \lambda^*_{1+})^t\cdot \und \lambda_{1-}\cdot  {Q_2(\und x_2)}.
\quash{ \begin{pmatrix} 
      \lambda_{1}   \lambda_{d} & \cdots & \lambda_{l} \lambda_{d}    \\
         \vdots & \ddots & \vdots   \\
          \lambda_{1}  \lambda_{l+\delta+1}  & \cdots & \lambda_{l} \lambda_{l+\delta+1}    \\
\end{pmatrix}\cdot {Q_2(\und x_2)}.}
\]
Hence, the relation $D_1 + \frac{1}{2} J_{l} B_2^{t}J_{\delta} B_1=0$ which holds over $U\subset \Mloc$ (see the proof of Theorem \ref{Za1}) amounts to
\[
 \und x_{1-}^t\cdot \und \lambda_{1-}=\lambda_t\cdot \und x_{1-}^t\cdot \und y_{1-}=-\lambda^2_t\cdot (\und y^*_{1+})^t\cdot \und y_{1-}\cdot  {Q_2(\und x_2)},
\]
and over the $\O$-flat $\wti U_{s, t}$ to
\begin{equation}\label{equation3}
 \und x_{1-}^t\cdot \und y_{1-}=-(\und y^*_{1+})^t\cdot \und y_{1-}\cdot \lambda_t {Q_2(\und x_2)}.
\end{equation}

Equivalently, this is
\begin{equation}\label{equation3*}
(\und x_{1-} + \lambda_t\cdot  {Q_2(\und x_2)}\cdot \und y^*_{1+})^t\cdot \und y_{1-}=0.
\end{equation}
Similarly, from $ C_1 = -\frac{1}{2} J_{l} B_2^{t}J_{\delta} A$, $D_2 = -\frac{1}{2} J_{l} B_2^{t}J_{\delta} B_2$, we obtain
\begin{equation}\label{equation4*}
(\und x_{1-} + \lambda_t\cdot  {Q_2(\und x_2)}\cdot \und y^*_{1+})^t\cdot \und y_{2}=0,
\end{equation}
\begin{equation}\label{equation5*}
(\und x_{1-} + \lambda_t\cdot  {Q_2(\und x_2)}\cdot \und y^*_{1+})^t\cdot \und y_{1+}=0.
\end{equation}
Since $y_t=1$, these, all together, amount to
\begin{equation}\label{minus}
\und x_{1-} + \lambda_t\cdot  {Q_2(\und x_2)}\cdot \und y^*_{1+}=0.
\end{equation}
We now examine the relations $C_2 = -\frac{1}{2} J_{l} B_1^{t}J_{\delta} A $,
$ D_3 = -\frac{1}{2} J_{l} B_1^{t}J_{\delta} B_1$, $D_4 = -\frac{1}{2} J_{l} B_1^{t}J_{\delta} B_2 $.  In a similar fashion, 
we find that these, all together,  amount to
\begin{equation}\label{plus}
\und x_{1+} + \lambda_t\cdot  {Q_2(\und x_2)}\cdot \und y^*_{1-}=0.    
\end{equation}
Combining the relations (\ref{minus}) and (\ref{plus}) we obtain the desired equations
\[
x_{i}+\lambda_t  {Q_2(\und x_2)} y_{d+1-i}=0, \ \forall i\in \Delta^c.
\]
Finally, (\ref{equation2}) amounts to
\[
x_i(y_j -  \lambda_t  {Q_1( \und y_1 )} x_{d+1-j})=0, \ \forall i, j\in \Delta.
\]
By (\ref{equation1}), $Q_2(\und x_2)$ is not a zero divisor in the coordinate ring of
the $\O$-flat scheme
$\wti U_{s, t}$. Since $Q_2(\und x_2)$ belongs to the ideal $(x_i)_{i\in \Delta}$, we see that over 
$\wti U_{s, t}$, we also have
\[
y_j -  \lambda_t  {Q_1( \und y_1 )} x_{d+1-j}=0, \ \forall j\in \Delta.
\]
Assembling the above, we see that all the generators of the ideal $\mathfrak K_{s, t}$ vanish on 
$\wti U_{s, t}$. Therefore, $\wti U_{s,t}$ is a closed subscheme of the spectrum of the ring that appears 
on the right hand side of (\ref{eqUstA}). We can see by eliminating $x_i$, for $i\in \Delta^c$, and $y_j$, for $j\in \Delta$, that this spectrum is a hypersurface in affine space of relative dimension $d-1$ and is then integral of dimension $d-1$. Since also $\dim(\wti U_{s, t})=d-1$, it now follows, by comparing dimensions, that the equaility
(\ref{eqUstA}) holds, and so $\wti U_{s, t}$ is as in the statement.
\end{proof}
\smallskip
 
We now continue on to show that there is a $\calG$-equivariant isomorphism 
$\calM\simeq \wti \cQ(\La, \La^\vee)$. First, we show that $ \rho: \calM\to \cQ(\La, \La^\vee)$ factors through the blow-up 
\begin{center}
\begin{tikzcd}
\calM \arrow[r, rightarrow] \arrow[dr, "\rho"']
& \wti \cQ(\La, \La^\vee) \arrow[d, "\beta"]\\
& \cQ(\La, \La^\vee).
\end{tikzcd}
\end{center}
It is enough, by the universal property of the blow-up, to show that the pull-back $\rho^*(Z_1)$ of $Z_1\subset \cQ(\La,\La^\vee)$ 
to $\calM$ is a Cartier divisor. Recall that $Z_1$ is locally defined by the ideal $(u,Q_2(\und x_2))$, where $u$ is the ``linking multiplier",
defined up to unit by $i_R(L)=u L'$, where $L$ and $L'$ are the universal isotropic lines over $\cQ(\La, \La^\vee)$. We will show that
over each affine chart $\wti U_{s, t}\subset \calM$ the linking multiplier is, in fact, a multiple of $Q_2(\und x_2)$. Hence, the pull-back of the ideal
$(u,Q_2(\und x_2))$ is principal, as desired.  

In Proposition \ref{affineChart} above, we are using the dual basis $e_i^\vee$ of $\Lambda^\vee$, which by definition, 
is given by $\lan e_i, e^\vee_{j}\ran=\delta_{ij}$. We have
\[
e_i^\vee = e_{d+1-i} \quad \text{for} \quad i \in S(M), \quad \text{and} \quad  e_j^\vee=\pi^{-1}e_{d+1-j} \quad \text{for} \quad  i\in S(N). 
\]
 With this basis, the ``linking" 
\[
i_R: \La_R\to \La^\vee_R
\]
is given by the symmetric matrix $S=(\lan e_i, e_j\ran)_{i, j}$. 
For example, in the case $d$ and $\delta$ have the same parity, we have
\[
i_R(\und x_{1-}, \und x_2, \und x_{1+})=(\und x_{1-}, \pi\und x_2, \und x_{1+}).
\]
 Hence, in view of the elements generating $\mathfrak K_{s, t}$, we see that Proposition \ref{affineChart} implies that over $\wti U_{s, t}\subset \calM$ we have
 \[
 i_R(\und x)=u\cdot \und y, \quad j_R(\pi \und y)=v\cdot \und x,
 \]
with 
\begin{equation}\label{l.con}
u = -{Q_2(\und x_2) }\lambda_t ,   \qquad v= {Q_1( \und y_1 )} \lambda_t  .    
\end{equation}
This establishes that the pullback of the ideal $(u, Q_2(\und x_2))$ is principal over $\wti U_{s, t}$ and so, locally principal over $\wti U$. Therefore, the restriction $\rho_{|\wti U}: \wti  U\to \cQ(\La, \La^\vee)$ factors through the blow-up $\wti \cQ(\La,\La^\vee)\to \cQ(\La,\La^\vee)$. The result for $\rho$ easily follows since $\rho$ is $\calG$-equivariant. Indeed, the $\calG$-translates of $U$ cover $\Mloc$. Hence, the $\calG$-translates of the open $\wti U=r^{-1}(U)\subset \calM$ cover $\calM$. This, combined with the above, implies that $\rho$ factors through $\wti\rho: \calM\to \wti \cQ(\La, \La^\vee)$. 

It remains to show that $\wti\rho: \calM\to \wti \cQ(\La, \La^\vee)$ is an isomorphism. This is easily obtained by using the description of the affine charts given in Proposition \ref{affineChart}, and the discussion above
together with $\calG$-equivariance: The map $\wti\rho$ is birational.
From Proposition \ref{affineChart} and the usual $\calG$-equivariance argument, $\calM$ is a regular scheme and is projective and flat over $\Spec(\O)$ of relative dimension $d-2$. The same is true for $\wti \cQ(\La, \La^\vee)$  by Proposition \ref{prop331}. In fact, by comparing the explicit description of the affine charts $\wti U_{s, t}\subset \calM$ given by Proposition \ref{affineChart}, with that of the affine charts for $\wti \cQ(\La, \La^\vee)$ given in the proof of Proposition \ref{prop331}, we can easily see, using (\ref{l.con}) above,  that $\wti\rho$ gives a bijection on $\bar k$-points.  Hence, the morphism $\wti\rho$ is birational quasi-finite and then, by using Zariski's main theorem, an isomorphism. This concludes the proof of the existence of a $\calG$-isomorphism $\calM\simeq \wti\cQ(\La,\La^\vee)$.

It remains to give a $\calG$-equivariant isomorphism $\calM\xrightarrow{\sim} \Mbl$. Again, we first show that
$r: \calM\to \Mloc$ factors through the blow-up $r^{\rm bl}: \Mbl\to \Mloc$. By using $\calG$-equivariance we see it is enough to show that the pull-back of the ideal generated by the entries of the matrix $X$ becomes principal over $r^{-1}(U)=\wti U\subset \calM$. This follows immediately from the equations in the proof of Proposition \ref{affineChart}. The proof of the fact that the map $\calM\to \Mbl$ is an isomorphism  follows
the same line of argument as for the map $\wti\rho$ above. In fact, after unravelling the various identifications of coordinate systems, we can see that the description of the affine chart $\wti U_{s, t}$ in (\ref{eqUst}) matches the description of the corresponding affine chart $V_{s, t}\cap U^{\rm bl}$ of the blow up $U^{\rm bl}$ given by (\ref{quadBU}). Hence, $\calM\to \Mbl$ restricts to an isomorphism $\wti U_{s, t}\xrightarrow{\sim}  V_{s, t}\cap U^{\rm bl}$. This concludes the proof of Theorem \ref{mainthm}. \hfill $\square$

 \subsection{Additional properties}\label{ss53}
 
 We now give some further properties:
  \quash{\begin{equation}\label{eqDiagram2}
\begin{tikzcd}
&\calM\arrow[dl, "r"']\arrow[r, "\simeq"]\arrow[dr, "\rho"]  &\wti \cQ(\La, \La^\vee)\arrow[d, "\beta"] &\\
\Mloc &&  \cQ(\La, \La^\vee). &
\end{tikzcd}
\end{equation}
}
 
 \begin{prop}\label{propAdditional}
 a)  The exceptional locus of $r: \calM(\La)=\Mbl(\La)\longrightarrow \Mloc(\La)$ is 
  \[
 r^{-1}(*)= \bbP(\Lambda^\vee/\La)\times_k \bbP(\La/\pi\La^\vee)\simeq \bbP^{\delta-1}\times_k \bbP^{d-\delta-1}.
 \]
 
 b)  The morphism $\rho: \calM(\La)\to \cQ(\La, \La^\vee)$  is an isomorphism over the complement of the intersection $Z_0\cap Z_1\cap Z_2=\cQ(\Lambda^\vee/\La)\times_k \cQ(\La/\pi\La^\vee)\subset \cQ(\La, \La^\vee)$. Over this intersection $\rho$ is a $\bbP^1$-bundle. 
 \end{prop}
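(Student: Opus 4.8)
The plan is to read off both statements from the explicit affine charts already constructed, together with $\calG$-equivariance.

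\emph{Part (a).} By construction $r^{\rm bl}\colon\Mbl(\La)\to\Mloc(\La)$ is the blow-up of the reduced closed point $*$, hence an isomorphism over $\Mloc(\La)\smallsetminus\{*\}$; using the identification $\calM(\La)=\Mbl(\La)$, the exceptional locus of $r$ is therefore exactly $r^{-1}(*)$, and it is contained in $\wti U=r^{-1}(U)$ since $*\in U$ (being the unique closed $\calG$-orbit). By Proposition~\ref{affineChart} and the remark following it, $\wti U$ is covered by the charts $\wti U_{s,t}$ with $s\in\Delta$, $t\in\Delta^c$, presented in \eqref{eqUst}. On $\wti U_{s,t}$ the map $r$ sends the defining coordinates $z_{ij}$ of $*$ (entries of $X$) to the entries of $X=\lambda_t\,\und x^{t}\cdot\und y$ with $\und x\neq 0$, $\und y\neq 0$, so the pullback of the ideal of $*$ is $(\lambda_t)$; imposing $\lambda_t=0$ in \eqref{eqUst} degenerates the hypersurface relation to $\pi=0$ and gives
\[
\wti U_{s,t}\cap r^{-1}(*)\;\simeq\;\Spec\bigl(k[(x_i)_{i\in\Delta},(y_j)_{j\in\Delta^c}]/(x_s-1,\,y_t-1)\bigr)\;\simeq\;\bbA^{\delta-1}_k\times_k\bbA^{d-\delta-1}_k .
\]
These patches glue to $\bbP^{\delta-1}_k\times_k\bbP^{d-\delta-1}_k$, with $(x_i)_{i\in\Delta}$ homogeneous coordinates on the first factor and $(y_j)_{j\in\Delta^c}$ on the second. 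Since $(x_i)_{i\in\Delta}$ describes the line $\bar W_-\subset N\otimes_\O k=\Lambda^\vee/\Lambda$ and $(y_j)_{j\in\Delta^c}$ the line $\bar W_+^\vee\subset M\otimes_\O k=\Lambda/\pi\Lambda^\vee$, this yields the $\calG$-equivariant identification $r^{-1}(*)\simeq\bbP(\Lambda^\vee/\Lambda)\times_k\bbP(\Lambda/\pi\Lambda^\vee)$.

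\emph{Part (b).} By Theorem~\ref{mainthm}, $\rho$ is identified, via $\calM(\La)\simeq\wti\cQ(\La,\La^\vee)$, with the blow-up $\beta\colon\wti\cQ(\La,\La^\vee)\to\cQ(\La,\La^\vee)$ along $Z_1$ (Proposition~\ref{prop331}). Away from $\cQ^0(\La,\La^\vee)$ the scheme $\cQ(\La,\La^\vee)$ is regular and $Z_1$ is principal (Theorem~\ref{thmQ} and its proof), so $\beta$ is an isomorphism there. Over $\cQ^0(\La,\La^\vee)$, which is covered by the affine charts $\calV=\calV_{ij}$ carrying a smooth map $f\colon\calV\to\calB=\Spec(\O[u,v,S,T]/(uv-\pi,\,uS+vT))$ with $S\mapsto Q_1$, $T\mapsto Q_2$, one has $\wti\calV_1\simeq\calV\times_\calB\wti\calB$ where $\wti\calB\to\calB$ is the blow-up of $(S,v)$ (proof of Proposition~\ref{prop331}). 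The Jacobian criterion shows $\calB$ is regular except at $o=\{u=v=S=T=\pi=0\}$, hence $\cQ(\La,\La^\vee)$ is regular exactly on the complement of $\Sigma:=f^{-1}(o)=V(u,v,Q_1,Q_2)$, and by Theorem~\ref{thmQ}(C2) $\Sigma=Z_0\cap Z_1\cap Z_2\simeq\cQ(\Lambda^\vee/\Lambda,\langle\,,\,\rangle_2)\times_k\cQ(\Lambda/\pi\Lambda^\vee,\langle\,,\,\rangle_1)$. On the regular locus $\cQ(\La,\La^\vee)\smallsetminus\Sigma$ the prime divisor $Z_1$ is Cartier, so $\beta$ is an isomorphism there. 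Over $\Sigma$: since $f^{-1}(o)\to\calB$ factors through $\{o\}=\Spec k$, the restriction of $\beta$ over $\Sigma$ is the projection $\Sigma\times_k\bigl(\wti\calB\times_\calB\Spec k\bigr)\to\Sigma$, and from the two charts of $\wti\calB$ displayed in the proof of Proposition~\ref{prop331}, namely $\Spec(\O[u,v,x]/(uv-\pi))$ and $\Spec(\O[S,T,y]/(STy^2+\pi))$, the scheme-theoretic fibre $\wti\calB\times_\calB\Spec k$ is the reduced $\bbP^1_k$ obtained by gluing $\Spec k[x]$ and $\Spec k[y]$ along $xy=1$. Therefore $\beta^{-1}(\Sigma)\to\Sigma$ is a $\bbP^1$-bundle; in particular $\beta$ is not an isomorphism over any point of $\Sigma$, so its isomorphism locus is exactly $\cQ(\La,\La^\vee)\smallsetminus\Sigma$. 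The case $\delta=1$, where $\Sigma=\varnothing$ and $\beta$ is an isomorphism, is immediate from Remark~\ref{delta1}.

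\emph{Main obstacle.} The principal difficulty is bookkeeping: matching the coordinate systems on the charts $\wti U_{s,t}$, on $\calV$ and $\calB$, and on the two exceptional loci --- in particular keeping $Z_1$ versus $Z_2$, and the roles of $u$ and $v$, straight --- and checking that the fibre of $\beta$ over $\Sigma$ is a \emph{reduced} $\bbP^1$, not merely set-theoretically a $\bbP^1$, so that one genuinely obtains a $\bbP^1$-bundle with no embedded or nilpotent components. Granting these, the $\calG$-equivariant identification in (a) and the globalization in (b) are routine.
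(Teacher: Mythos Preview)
Your proof is correct and follows essentially the same approach as the paper: for (a) you use the explicit affine charts $\wti U_{s,t}$ from Proposition~\ref{affineChart} exactly as the paper does, and for (b) you invoke Theorem~\ref{mainthm} to identify $\rho$ with $\beta$ and then appeal to the local computation on $\calB$ from the proof of Proposition~\ref{prop331}, just as the paper does (though more tersely). Your version of (b) is in fact more detailed than the paper's, spelling out the regularity locus of $\calB$ and the explicit $\bbP^1_k$ fibre over $o$ that the paper leaves implicit in its reference to ``Proposition~\ref{prop331} and its proof''.
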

 \begin{proof}
  Let us describe the inverse image $r^{-1}(*)\subset \calM(\La)$:
 We easily see that $r^{-1}(*)$ is a closed subscheme of 
 \[
 \{(x_1;\ldots ; x_d), (y_1;\ldots ; y_d)\}= \bbP(\La)_k\times_k \bbP(\La^\vee)_k.  
 \]
 Over the intersection $
 \wti U_{s, t}\cap r^{-1}(*)
 $
 we have $x_s=1$ and $X=\und x^t\cdot \und \lambda=0$. This gives $\und\lambda=0$ and in particular
 $\lambda_t=0$. Using the equations for $\wti U_{s, t}$ given by Proposition \ref{affineChart}, we see that 
 $
 \wti U_{s, t}\cap r^{-1}(*)
 $
 is defined by $x_i=0$ for all $i\in \Delta^c$, $x_s=1$, and $y_j=0$, for all $j\in \Delta$, $y_t=1$. Therefore, the inverse
 image $r^{-1}(*)$ is
 \[
 \bbP(N)_k\times_k \bbP(M)_k=\bbP(\Lambda^\vee/\La)\times \bbP(\La/\pi\La^\vee)\simeq \bbP^{\delta-1}\times_k \bbP^{d-\delta-1}.
 \]
 This proves (a). (Alternatively, we could have used the description of $r$ as a blow-up from Theorem \ref{mainthm} and its proof.) 
 Part (b) follows from the description of the blow-up $\wti \cQ(\La, \La^\vee)\to \cQ(\La, \La^\vee)$ in Proposition \ref{prop331} and its proof,
 and Theorem \ref{mainthm} which identifies $\rho$ with this blow-up.
 \end{proof}

 \begin{Remark}
 {\rm a) We can now explain the birational map 
 \[
  \cQ(\La, \La^\vee) \dashrightarrow \Mloc(\La)
  \]
  as follows:  We first perform the blow-up 
 $\wti \cQ(\La, \La^\vee)$ of $\cQ(\La, \La^\vee)$. Then to obtain $\Mloc(\La)$, 
 we contract a subscheme $\wti Z_0$, isomorphic to $\bbP^{\delta-1}\times \bbP^{d-\delta-1}$ (which is in fact an irreducible
 component of the special fiber of the blow-up) to a point. This subscheme is the strict transform of the component 
 $Z_0\simeq \bbP^{\delta-1}\times \bbP^{d-\delta-1}$ of the special fiber of $\cQ(\La,\La^\vee)$. \quash{Note that the blow-up $\beta: \wti \cQ(\La, \La^\vee)\to \cQ(\La, \La^\vee)$  is an isomorphism over the complement of the intersection $Z_0\cap Z_1\cap Z_2=\cQ(\Lambda^\vee/\La)\times_k \cQ(\La/\pi\La^\vee)\subset \cQ(\La, \La^\vee)$. Over this intersection $\beta$ is a $\bbP^1$-bundle. }
 
 b) The  cases $\delta=0$ and $\delta=1$ are different. 
 
 i) When $\delta=0$, $\La=\La^\vee$ and $\Mloc(\La)$ is the smooth quadric $\cQ(\La)$. Then 
 \[
 \Mloc=\wti \cQ(\La, \La^\vee)=\cQ(\La,\La^\vee).
 \]
 
 ii) When $\delta=1$, we also have $\Mloc(\La)\simeq \cQ(\La)$ (see \cite[Prop. 12.7]{HPR}). This case was also discussed in detail, but from slightly different perspectives, in \cite[12.7.2]{HPR} and in \cite[Prop. 2.16]{MP}. Now, $\Mloc(\La)$ is not smooth over $\Spec(\O)$ but only regular;
 the special fiber has an isolated singular point. In this case, $\cQ(\La, \La^\vee)$ (which is denoted by $P(\La)^{\rm fl}$ in \cite[12.7.2]{HPR}) is a blow-up of $\cQ(\La)$ at this singular point. We have $\wti \cQ(\La, \La^\vee)=\cQ(\La, \La^\vee)$,  $\rho: \calM(\La)\to \cQ(\La, \La^\vee)$
 is an isomorphism, and
 \[
 r: \calM(\La)=\Mbl(\La)=\wti \cQ(\La, \La^\vee)=\cQ(\La, \La^\vee)\xrightarrow{ \ } \Mloc(\La)=\cQ(\La)
 \]
 can be identified with the blow-up of $\Mloc(\La)=\cQ(\La)$ at its singular point, discussed above. In particular, $\calM(\La)=\Mbl(\La)$ is isomorphic to
 $P(\La)^{\rm fl}$ of \emph{loc. cit.}
 
 }
 
 \end{Remark}

\section{Application to Shimura varieties}\label{sShimura}

\subsection{Spin and orthogonal Shimura data}\label{ss61}

We now discuss some Shimura varieties to which we can apply these results.  We start with an odd prime $p$ and an orthogonal quadratic space 
$V$ over $\Q$ of dimension $d\geq 5$ and signature $(d-2, 2)$. 

As in the local set-up of \S \ref{ss23}, the Clifford algebra $C(V)$ is endowed with a $\Z/2\Z$-grading 
$
C(V)=C^+(V)\oplus C^-(V)
$
and a canonical involution $c\mapsto c^*$.
The group of spinor similitudes $G=\GSpin(V)$  is the reductive group over $\Q$ defined by
\[
G(R) = \{  g\in C^+(V_R)^\times \ |\ g V_R g^{-1} = V_R,\, g^* g \in R^\times \}
\]
for any $\Q$-algebra $R$.  The spinor similitude $\eta: G \to \Gm$ is  defined by 
$\eta(g) =g^* g$, and there is a representation $\alpha: G \to \SO(V)$ defined by  $g\cdot v = gvg^{-1}$.

Consider the hermitian symmetric domain 
\[
 X= \{ z\in V_\C : \lan z,z\ran=0,\,  \lan z,\bar{z}\ran<0 \} /\C^\times
\]
 of dimension $d-2$. The group $G(\R)$ acts on $X$ through $G \to \SO(V)$,
and the action of any $g\in G(\R)$ with $\eta_G(g) < 0$ interchanges the two connected components of $X$.

Now write   $z\in X$  as  $z=u+iv$ with $u,v\in V_\R$. Then, the subspace $\mathrm{Span}_\R\{u,v\}$
is a negative definite plane in $V_\R$, oriented by the ordered orthogonal basis $u,v$.  There are natural $\R$-algebra maps 
\[
\C \xrightarrow{\sim}  C^+(\mathrm{Span}_\R\{u,v\}) \to C^+(V_\R).
\]
The first is  determined by 
\[
i\mapsto  \frac{uv} { \sqrt{ Q(u) Q(v)} },
\]
and the second is induced by  the inclusion $\mathrm{Span}_\R\{u,v\} \subset V_\R$.  The above composition restricts to an injection
$h_z: \C^\times \to G(\R)$, which arises from a morphism $h_z : \mathbb{S} \to G_\R$ of real algebraic groups.
Here  $\mathbb{S} = \mathrm{Res}_{\C/\R} \Gm$ is  Deligne's torus.  
The construction $z\mapsto h_z$  realizes 
$
X \subset \Hom(\mathbb{S} , G_\R)
$
as a $G(\R)$-conjugacy class. The pair $(G, X)$ is a Shimura datum of Hodge type.

Using the conventions of \cite{DeligneCorvallis}, the Hodge structure on $V$ determined by ${h}_z$ is
\begin{equation}\label{gspin hodge}
V_\C^{(1,-1)} = \C z ,\quad
V_\C^{(0,0)} = ( \C z + \C \bar{z} )^\perp,\quad
V_\C^{(-1,1)} = \C \bar{z}.
\end{equation}

This implies that the Shimura cocharacter $\mu_z: \Gm_\C\to G_\C$ obtained from $\{h_z\}$
is conjugate to $\ti \mu:\Gm_\C\to G_\C$ given by
\[
\ti \mu(t)=t^{-1}f_1f_{d}+f_{d}f_1 
\]
where $f_1$, $f_d$ are part of a basis $(f_i)_i$ of $V_\C$ such that $\lan f_1, f_d \ran=1$, $\lan f_1, f_1\ran=\lan f_d, f_d\ran=0$,
and the arithmetic on the right hand side takes place in the Clifford algebra $C(V_\C)$. This agrees with the cocharacter considered in \S \ref{ssSpinLM} above.
 As in \S \ref{ssSpinLM}, we see that $\alpha\cdot \wti \mu: \Gm_\C\to \SO(V)_\C$ is given by $\mu(t)={\rm diag}(t^{-1}, 1, \ldots , 1, t)$.

Observe that the action of $G(\R)$ on $X$ factors through $\SO(V)(\R)$ via $\alpha$, and we also obtain a Shimura datum $(\SO(V), X)$
which is now of abelian type. The corresponding Shimura cocharacter is conjugate to $\mu(t)={\rm diag}(t^{-1}, 1, \ldots , 1, t)$.

\subsection{Spinor integral models}\label{ss62}

We continue with the notations and assumptions of the previous paragraph. In particular, we take $G=\GSpin(V)$ and $X$
the $G(\R)$-conjugacy class of $\{h_z\}:\mathbb{S}\to G_\R$ above that define the spin similitude Shimura datum $(G, X)$.

In addition, we choose a vertex lattice $\La\subset V\otimes_{\Q}\Q_p$ with $\pi\La^\vee\subset \La\subset \La^\vee$
and $\delta={\rm length}_{\Z_p}(\La^\vee/\La)$, $\delta_*={\rm min}(\delta, d-\delta)$, and assume $\delta_*\geq 1$. This defines the parahoric subgroup
\[
K_p=\{g\in \GSpin(V\otimes_{\Q}\Q_p)\ |\ g\La g^{-1}=\La, \ \eta(g)\in \Z^\times_p\}
\]
which we fix below. 
 Choose also a sufficiently small compact open subgroup $K^p$ of the prime-to-$p$ finite adelic points $G({\mathbb A}_{f}^p)$ of $G$
 and set $K=K^pK_p$. 
 The Shimura variety  ${\rm Sh}_{K}(G, X)$ with complex points
 \[
 {\rm Sh}_{K}(G, X)(\C)=G(\Q)\backslash X\times G({\mathbb A}_{f})/K
 \]
 is of Hodge type and has a canonical model over the reflex field $\Q$.

 \begin{thm}\label{LM}
 For every $K^p$ as above, there is a
 scheme $\mathscr S^{\rm reg}_K(G, X)$, flat over $\Spec(\Z_{p})$, 
 with
 \[
 \mathscr S^{\rm reg}_K(G, X)\otimes_{\Z_p}\Q_p={\rm Sh}_K(G, X)\otimes_{\Q}\Q_p,
 \]
 and which supports a ``local model diagram"
  \begin{equation}\label{LMdiagram}
\begin{tikzcd}
&\wti{\mathscr{S}}^{\rm reg}_K(G, X)\arrow[dl, "\pi^{\rm reg}_K"']\arrow[dr, "q^{\rm reg}_K"]  & \\
\mathscr S^{\rm reg}_K(G, X)  &&  \Mbl(\La)
\end{tikzcd}
\end{equation}
such that:
\begin{itemize}
\item[a)] $\pi^{\rm reg}_K$ is a $\calG$-torsor for the parahoric group scheme $\calG$ that corresponds to $K_p$,

\item[b)] $q^{\rm reg}_K$ is smooth and $\calG$-equivariant.

\item[c)] $\mathscr S^{\rm reg}_K(G, X)$ is regular and has special fiber which is a divisor with
normal crossings. The multiplicity of each irreducible component of the special fiber is either one  or two and the
components
of multiplicity two are each isomorphic to $\bbP^{\delta-1}\times \bbP^{d-\delta-1}$ over $\bar{\mathbb F}_p$.
In fact, $\mathscr S^{\rm reg}_K(G, X)$ can be covered, in the \'etale topology, by schemes which are smooth over $
\Spec(\Z_p[u, x, y]/(u^2xy-p))
$
when $\delta_*\geq 2$, or over
$
\Spec(\Z_p[u, x]/(u^2x-p))
$
when $\delta_*= 1$. 

\end{itemize}

In addition, we have: 

\begin{itemize}
 \item[1)] The schemes  $\{\mathscr S^{\rm reg}_{K}(G, X)\}_{K^p}$, for variable $K^p$,  support correspondences that 
extend the standard prime-to-$p$ Hecke correspondences on $\{{\rm Sh}_{K}(G, X)\}_{K^p}$. These correspondences extend to the local model diagrams above
(acting trivially on $\Mbl(\La)$). 

\item[2)] The projective limit
\[
\mathscr{S}^{\rm reg}_{K_p}(G, X)=\varprojlim\nolimits_{K^p}{\mathscr{S}}_{K^pK_p}(G, X)
\]
 satisfies the ``dvr extension property":  For every dvr $R$ of mixed characteristic $(0, p)$ we have:
 \[
 \mathscr{S}^{\rm reg}_{K_p}(G, X)(R)={\rm Sh}_{K^p}(G, X)(R[1/p]).
 \]
\end{itemize}
\end{thm}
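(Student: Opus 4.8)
The plan is to construct $\mathscr S^{\rm reg}_K(G,X)$ from the Kisin--Pappas model $\mathscr S_K(G,X)$ of Theorem \ref{LMKP} by a \emph{linear modification} in the sense of \cite{PaUnitary}, using the $\calG$-equivariant projective morphism $r^{\rm bl}\colon\Mbl(\La)\to\Mloc(\La)$ of Theorem \ref{BUpIntro} (this exists since the worst point $*$ is $\calG$-fixed). Recall from Theorem \ref{LMKP} the $\calG$-torsor $\pi_K\colon\wti{\mathscr S}_K(G,X)\to\mathscr S_K(G,X)$ and the smooth $\calG$-equivariant $q_K\colon\wti{\mathscr S}_K(G,X)\to\Mloc(\La)$. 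First I would set
\[
\wti{\mathscr S}^{\rm reg}_K(G,X):=\wti{\mathscr S}_K(G,X)\times_{\Mloc(\La),\,q_K}\Mbl(\La),
\]
so that the projection $q^{\rm reg}_K$ onto $\Mbl(\La)$, being the base change of the smooth $\calG$-equivariant $q_K$, is smooth and $\calG$-equivariant, while the projection onto $\wti{\mathscr S}_K(G,X)$ is the base change of $r^{\rm bl}$ along $q_K$, hence projective and an isomorphism over the preimage of $\Mloc(\La)\setminus\{*\}$. The diagonal $\calG$-action on this fibre product is free, since it is free on the first factor through $\pi_K$. Trivializing $\pi_K$ fppf-locally on $\mathscr S_K(G,X)$ and using $\calG$-equivariance of $q_K$, one checks exactly as in \cite{PaUnitary} that the quotient
\[
\mathscr S^{\rm reg}_K(G,X):=\wti{\mathscr S}^{\rm reg}_K(G,X)/\calG
\]
exists as a scheme, is projective over $\mathscr S_K(G,X)$, and that $\pi^{\rm reg}_K$ is a $\calG$-torsor. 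This gives the local model diagram (\ref{LMdiagram}) with properties (a) and (b). Flatness over $\Z_p$ and the identification of the generic fibre with ${\rm Sh}_K(G,X)\otimes_\Q\Q_p$ follow because $\Mbl(\La)$ is $\Z_p$-flat (Theorem \ref{BUpIntro}) and $r^{\rm bl}$ is an isomorphism on generic fibres.

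Next I would transfer the local geometry. Since $\calG$ is smooth and $q^{\rm reg}_K$ is smooth, every point of $\mathscr S^{\rm reg}_K(G,X)$ has an étale neighbourhood that is also étale over $\Mbl(\La)$, exactly as in the discussion following Theorem \ref{LMKP}. Hence the regularity of $\mathscr S^{\rm reg}_K(G,X)$, the normal-crossings property of its special fibre, the fact that all multiplicities are one or two, and the étale-local description as smooth over $\Spec(\Z_p[u,x,y]/(u^2xy-p))$ --- resp. over $\Spec(\Z_p[u,x]/(u^2x-p))$ when $\delta_*=1$ --- all follow from the corresponding assertions for $\Mbl(\La)$ in Theorem \ref{BUpIntro}. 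For the multiplicity-two components, observe that the ``worst-point locus'' of $\mathscr S_K(G,X)\otimes_{\Z_p}\overline{\mathbb F}_p$ (the locus that étale-locally over $\Mloc(\La)$ is the fibre over the single point $*$) is $0$-dimensional, hence finite; since $r^{\rm bl}$ is an isomorphism over $\Mloc(\La)\setminus\{*\}$ with fibre over $*$ equal to $\bbP^{\delta-1}\times\bbP^{d-\delta-1}$ by Proposition \ref{propAdditional}(a), and $\mathscr S^{\rm reg}_K(G,X)\to\mathscr S_K(G,X)$ is étale-locally a base change of $r^{\rm bl}$, the multiplicity-two components of $\mathscr S^{\rm reg}_K(G,X)\otimes_{\Z_p}\overline{\mathbb F}_p$ are exactly the fibres over these finitely many points, each isomorphic to $\bbP^{\delta-1}\times\bbP^{d-\delta-1}$. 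This proves (c).

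For statement (1), since $\Mbl(\La)$ --- like $\Mloc(\La)$ --- is independent of $K^p$, the construction is functorial in $K^p$, so the prime-to-$p$ Hecke correspondences on $\{\mathscr S_K(G,X)\}_{K^p}$ from \cite{KP} pull back, along the projective morphisms $\mathscr S^{\rm reg}_K(G,X)\to\mathscr S_K(G,X)$, to correspondences on $\{\mathscr S^{\rm reg}_K(G,X)\}_{K^p}$; these extend the Hecke correspondences on the Shimura varieties and lift to the local model diagrams with trivial action on $\Mbl(\La)$, just as for the diagrams of Theorem \ref{LMKP}. For statement (2), recall that the Kisin--Pappas model is canonical in the sense of \cite{PaCan}, so $\mathscr S_{K_p}(G,X):=\varprojlim_{K^p}\mathscr S_{K^pK_p}(G,X)$ has the dvr extension property; moreover $\mathscr S^{\rm reg}_{K_p}(G,X)\to\mathscr S_{K_p}(G,X)$ is proper and an isomorphism on generic fibres. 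For a dvr $R$ of mixed characteristic $(0,p)$ one has $R[1/p]=\mathrm{Frac}(R)$, so any $x\in{\rm Sh}_{K^p}(G,X)(R[1/p])$ extends to $\bar x\in\mathscr S_{K_p}(G,X)(R)$ by the extension property of $\mathscr S_{K_p}(G,X)$; the valuative criterion of properness applied to $\mathscr S^{\rm reg}_{K_p}(G,X)\to\mathscr S_{K_p}(G,X)$ then produces a unique $R$-point of $\mathscr S^{\rm reg}_{K_p}(G,X)$ lifting $\bar x$ and restricting to $x$, and injectivity of $\mathscr S^{\rm reg}_{K_p}(G,X)(R)\to\mathscr S^{\rm reg}_{K_p}(G,X)(R[1/p])$ is separatedness. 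This gives (2).

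The genuinely hard inputs --- the explicit equations for the local model and the regularity of its blow-up (Theorems \ref{Thmexplicit}, \ref{BUpIntro}, \ref{mainthm}) --- are already established, so the main remaining point is verifying that the linear-modification formalism of \cite{PaUnitary}, built precisely for this configuration (a $\calG$-torsor over the integral model, a smooth $\calG$-equivariant map to the local model, and a $\calG$-equivariant projective modification of the latter), applies verbatim here and that $\mathscr S^{\rm reg}_K(G,X)$ thereby inherits all the stated properties. Among the derived consequences I expect the dvr extension property (2) to be the most delicate, since it is the only one using properness of the modification morphism together with the canonicity of the Kisin--Pappas model rather than merely the étale-local structure.
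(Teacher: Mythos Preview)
Your proposal is correct and follows essentially the same approach as the paper: define $\wti{\mathscr S}^{\rm reg}_K(G,X)$ as the fibre product $\wti{\mathscr S}_K(G,X)\times_{\Mloc(\La)}\Mbl(\La)$, form the quotient by $\calG$ as a linear modification in the sense of \cite{PaUnitary}, and then read off the local properties from those of $\Mbl(\La)$ via the resulting local model diagram. Your treatment of (2) via the valuative criterion and of the multiplicity-two components via Proposition~\ref{propAdditional}(a) is in fact more explicit than the paper's, which simply remarks that the remaining properties ``follow from the corresponding properties for $\mathscr S_K(G,X)$ and the construction.''
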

 
 Note that (a) and (b) together amount to the existence of a smooth morphism
 \[
\bar q_K: \mathscr S^{\rm reg}_K(G, X)\to [\calG\backslash \Mbl(\La)]
 \]
 where the target is the quotient algebraic stack.
 
 \begin{proof}  
 By \cite[Theorem 4.2.7]{KP}, there are schemes $\mathscr S_K(G, X)$ which satisfy similar properties, excluding (c), but 
 with $\Mbl(\La)$ replaced by the PZ local model $\Mloc(\La)$. (Note that all the assumptions of \cite[Theorem 4.2.7]{KP}
 are satisfied: $(G, X)$ is of Hodge type, $p$ is odd, the group $G$ splits over a tamely ramified extension of $\Q_p$, and, 
 by the discussion in \S \ref{ss24}, the
 stabilizer group $\calG_x$ is connected.) In particular, we have
 \begin{equation}\label{LMdiagram2}
\begin{tikzcd}
&\wti{\mathscr{S}}_K(G, X)\arrow[dl, "\pi_K"']\arrow[dr, "q_K"]  & \\
\mathscr S_K(G, X)  &&  \Mloc(\La)
\end{tikzcd}
\end{equation}
with $\pi_K$ a $\calG$-torsor and $q_K$ smooth and $\calG$-equivariant. We set
\[
\wti{\mathscr S}^{\rm reg}_K(G, X)=\wti{\mathscr S}_K(G, X)\times_{\Mloc(\La)}\Mbl(\La)
\]
which carries a diagonal $\calG$-action. Since $r: \Mbl(\La)\tol \Mloc(\La)$ is given by a blow-up, is projective, and we can see  (\cite[\S 2]{PaUnitary}) that the quotient
\[
\pi^{\rm reg}_K: \wti{\mathscr S}^{\rm reg}_K(G, X)\tol {\mathscr S}^{\rm reg}_K(G, X):=\calG\backslash \wti{\mathscr S}^{\rm reg}_K(G, X)
\]
is represented by a scheme and gives a $\calG$-torsor. (This is an example of a ``linear modification", see \cite[\S 2]{PaUnitary}.) 
In fact, since blowing-up commutes with \'etale localization, ${\mathscr S}^{\rm reg}_K(G, X)$ is the blow-up of ${\mathscr S}_K(G, X)$ at the subscheme of closed points that correspond to $*\in \Mloc(\La)$ under the local model diagram (\ref{LMdiagram}). This set of points is the discrete Kottwitz-Rapoport stratum of the special fiber of 
${\mathscr S}_K(G, X)$. The projection gives a smooth $\calG$-morphism
\[
q^{\rm reg}_K: \wti{\mathscr S}^{\rm reg}_K(G, X)\tol \Mbl(\La)
\]
which completes the local model diagram. Property (c) follows from Theorem \ref{mainthm}, Proposition \ref{prop331} and its proof, Proposition \ref{propAdditional}, and properties (a) and (b) which imply that $ {\mathscr S}^{\rm reg}_K(G, X)$ and
$\Mbl(\La)$ are locally isomorphic for the \'etale topology. The rest of the properties in the statement
follow from the corresponding properties for $\mathscr{S}_K(G, X)$ and the construction.
\end{proof} 

\begin{Remark}\label{remShimura}
{\rm a)  As we see in the proof, ${\mathscr S}^{\rm reg}_K(G, X)$ is the blow-up of the discrete
Kottwitz-Rapoport stratum of ${\mathscr S}_K(G, X)$. The geometric
fibers of the blow-up morphism 
\[
{\mathscr S}^{\rm reg}_K(G, X)\to {\mathscr S}_K(G, X)
\]
 over points in this stratum are each isomorphic to $\bbP^{\delta-1}\times \bbP^{d-\delta-1}$. These 
fibers are exactly  the components 
of multiplicity two in the geometric special fiber of $\mathscr S^{\rm reg}_K(G, X)$.

b) When $\delta_*=1$, $\Mloc(\La)$ is already regular.
Hence, in this case ${\mathscr S}_K(G, X)$ is also regular: This 
integral model has appeared, via a different construction, in \cite{MP}.
\quash{
c) The integral models $\mathscr S_K(G, X)$  of \cite{KP} are ``canonical" in the sense that they 
satisfy a characterization property (see \cite{PaCan}). This of course fails for the models $\mathscr S^{\rm reg}_K(G, X)$ constructed here.
However, these have the advantage of having better scheme theoretic properties.}
}
\end{Remark}

 \subsection{Orthogonal integral models}\label{ss62b}

Let us mention that a result exactly like Theorem \ref{LM} can be obtained for the Shimura varieties associated to the
Shimura data $(\SO(V), X)$ and the parahoric subgroup given by the connected stabilizer
\[
K_p=\{g\in \SO(V\otimes_\Q\Q_p)\ |\ g\La=\La,\ \varepsilon(g)=1\}
\]
by combining the previous results with \cite[Theorem 4.6.23]{KP}: Note that $(\SO(V), X)$ is of abelian type. The corresponding 
integral model $\mathscr S_K(\SO(V), X)$ is obtained as a quotient of the integral model $\mathscr S_K(\GSpin(V), X)$ by a finite group action (\cite[\S 4.6]{KP}). We can then construct a regular integral model $\mathscr S^{\rm reg}_K(\SO(V), X)$ with $\Mbl(\La)$ as its local model by following the argument in the proof of Theorem \ref{LM} above.

 \subsection{Rapoport-Zink spaces}\label{ssRZ}
 
 Finally, we observe that our result can be applied to construct regular formal models of certain related Rapoport-Zink spaces.
 Our discussion will be brief, since passing from integral models of Shimura varieties to corresponding Rapoport-Zink formal schemes
  (which can be thought of as integral models of {\sl local} Shimura varieties) is, for the most part, routine. See for example \cite[\S 4]{HPR} 
 for another instance of this parallel treatment. 
 
We consider a local Shimura datum $(\GSpin(V), b, \{\mu\})$, with $V$ over $\Q_p$ and $\mu$ as above and fix the level subgroup $K$ to be the stabilizer of a vertex lattice $\Lambda$. Then a ``Rapoport-Zink formal scheme" ${\Mfr}^b:={\Mfr}_{(\GSpin(V),\mu, b, K)}$ over ${\rm Spf}(\Z_p)$ is constructed in \cite[\S 5]{HamaKim}, provided that $b$ is basic or $\GSpin(V)$ is residually split. (By work of R. Zhou \cite[Proposition 6.5]{Zhou}, this assumption implies that Axiom (A) of \cite[5.3]{HamaKim} is satisfied so the construction in \emph{loc. cit.} applies, but it should not be necessary. 
 The group is residually split when $(V, \lan\ ,\ \ran)$ affords a basis as in one of the four cases of \S \ref{ss42a}.)
In what follows, we assume that $b$ is basic.
 Then the formal scheme $\Mfr^b$ uniformizes the formal completion of the integral model ${\mathscr S}_K(G, X)$ of a corresponding Shimura variety ${\mathrm {Sh}}_K(G, X)$ along the basic locus of its special fiber, see \cite{HamaKim}, \cite{Oki}. 
 
 By its construction, $\Mfr^b$ supports a local model diagram
 \begin{equation}\label{LMdiagram3}
\begin{tikzcd}
&\wti{\Mfr^b}\arrow[dl, "\hat\pi"']\arrow[dr, "\hat q"]  & \\
\Mfr^b &&  \widehat{\Mloc(\La)}
\end{tikzcd}
\end{equation}
 of formal schemes over ${\rm Spf}(\Z_p)$. (In this, $\hat\pi$ is a $\calG$-torsor, $\hat q$ is formally smooth, and $\widehat{\Mloc(\La)}$ denotes the formal $p$-adic completion of 
 $\Mloc(\La)$, see \cite{Oki} for details.) Our results now imply that the blow-up $\Mfr^{b, \rm reg}$ of $\Mfr^{b}$ along the discrete stratum is regular and has the same \'etale local structure as described for ${\mathscr S}^{\rm reg}_K(G, X)$ in Theorem \ref{LM} (c). In fact, we can see that $\Mfr^{b, \rm reg}$ can be used to uniformize the completion of
 ${\mathscr S}^{\rm reg}_K(G, X)$ along its basic locus and that it affords a diagram
  \begin{equation}\label{LMdiagram4}
\begin{tikzcd}
&\wti{\Mfr}^{b, \rm reg} \arrow[dl, "\hat\pi^{\rm reg}"']\arrow[dr, "\hat q^{\rm reg}"]  & \\
\Mfr^{b, \rm reg} &&  \widehat{\Mbl(\La)}
\end{tikzcd}
\end{equation}
with similar properties as above.

\end{document}